\newtheorem{theorem}{Theorem}[section]
\newtheorem{corollary}[theorem]{Corollary}
\newtheorem{lemma}[theorem]{Lemma}
\newtheorem{proposition}[theorem]{Proposition}
\newtheorem{problem}[theorem]{Problem}
\theoremstyle{definition}
\newtheorem{definition}[theorem]{Definition}
\newtheorem{remark}[theorem]{Remark}
\newtheorem{example}[theorem]{Example}
\newcommand{\cA}{\mathcal{A}}
\newcommand{\cD}{\mathcal{D}}
\newcommand{\cF}{\mathcal{F}}
\newcommand{\cI}{\mathcal{I}}
\newcommand{\I}{\cI}
\newcommand{\cJ}{\mathcal{J}}
\newcommand{\J}{\cJ}
\newcommand{\cP}{\mathcal{P}}
\newcommand{\cU}{\mathcal{U}}
\newcommand{\cW}{\mathcal{W}}
\newcommand{\cZ}{\mathcal{Z}}
\newcommand{\continuum}{\mathfrak{c}}
\newcommand{\cc}{\continuum}
\newcommand{\fin}{\mathrm{Fin}}
\newcommand{\ED}{\mathcal{ED}} 
\newcommand{\EU}{\mathcal{EU}}
\newcommand{\conv}{\mathrm{conv}}
\newcommand{\BI}{\mathcal{BI}}
\begin{document} 


\title{Unboring ideals}


\author[A.~Kwela]{Adam Kwela}
\address[Adam Kwela]{Institute of Mathematics\\ Faculty of Mathematics\\ Physics and Informatics\\ University of Gda\'{n}sk\\ ul.~Wita  Stwosza 57\\ 80-308 Gda\'{n}sk\\ Poland}
\email{Adam.Kwela@ug.edu.pl}
\urladdr{http://kwela.strony.ug.edu.pl/}


\date{}


\subjclass[2010]{Primary: 
03E05. 
Secondary:
03E15, 
03E35, 
26A03
40A05,
54A20
54H05, 
}


\keywords{ideal, filter, Kat\v{e}tov order of ideals, sequentially compact space, Bolzano-Weierstrass theorem, maximal almost disjoint family, Hindman space, van der Waerden space}


\begin{abstract}
Our main object of interest is the following notion: we say that a topological space space $X$ is in FinBW($\mathcal{I}$), where $\mathcal{I}$ is an ideal on $\omega$, if for each sequence $(x_n)_{n\in\omega}$ in $X$ one can find an $A\notin\I$ such that $(x_n)_{n\in A}$ converges in $X$. 

We define an ideal $\mathcal{BI}$ which is critical for FinBW($\mathcal{I}$) in the following sense: Under CH, for every ideal $\mathcal{I}$, $\mathcal{BI}\not\leq_K\mathcal{I}$ ($\leq_K$ denotes the Kat\v{e}tov preorder of ideals) iff there is an uncountable separable space in FinBW($\mathcal{I}$). We show that $\mathcal{BI}\not\leq_K\mathcal{I}$ and $\omega_1$ with the order topology is in FinBW($\mathcal{I}$), for all $\bf{\Pi^0_4}$ ideals $\mathcal{I}$. 

We examine when FinBW($\mathcal{I}$)$\setminus$FinBW($\mathcal{J}$) is nonempty: we prove under MA($\sigma$-centered) that for $\bf{\Pi^0_4}$ ideals $\mathcal{I}$ and $\mathcal{J}$ this is equivalent to $\mathcal{J}\not\leq_K\mathcal{I}$. Moreover, answering in negative a question of M. Hru\v{s}\'ak and D. Meza-Alc\'antara, we show that the ideal $\text{Fin}\times\text{Fin}$ is not critical among Borel ideals for extendability to a $\bf{\Pi^0_3}$ ideal. Finally, we apply our results in studies of Hindman spaces and in the context of analytic P-ideals.
\end{abstract}


\maketitle


\section{Introduction}

A nonempty collection $\mathcal{I}$ of subsets of a set $X$ is called an ideal on $X$ if it is closed under subsets and finite unions of its elements. In this paper we also assume throughout that $X\notin \I$ and $X=\bigcup \I$. All ideals considered in this paper are defined on infinite countable sets. By $\fin$ we denote the ideal of all finite subsets of $\omega=\{0,1,\ldots\}$. 

We treat the power set $\mathcal{P}(X)$ as the space $2^X$ of all functions $f:X\rightarrow 2$ (equipped with the product topology, where each space $2=\left\{0,1\right\}$ carries the discrete topology) by identifying subsets of $X$ with their characteristic functions. Thus, we can talk about descriptive complexity of subsets of $\mathcal{P}(X)$ (in particular, of ideals on $X$). 

The classical Bolzano-Weierstrass theorem states that $[0,1]$ is sequentially compact, that is, each sequence in it has a convergent subsequence. It is known that in general there are compact topological spaces that are not sequentially compact. On the other hand, $\omega_1$ with the order topology is a sequentially compact space which is non-compact.

The following modification of the notion of sequentially compact space gives us control on the size of the convergent subsequence.

\begin{definition}
\label{DefFinBW}
Let $\I$ be an ideal on a countable set $M$. By FinBW($\I$) we denote the class of all Hausdorff spaces $X$ such that for every sequence $(x_n)_{n\in M}\subseteq X$ there exists a set $A\notin\I$ such that $(x_n)_{n\in A}$ converges in $X$. If $X$ is in this class we briefly write $X\in\text{FinBW}(\I)$.
\end{definition}

Observe that FinBW($\fin$) is the class of all sequentially compact spaces. Note that the notation FinBW($\I$) is close to the one introduced in \cite{Gdansk}, where $\I\in\text{FinBW}$ means that $[0,1]\in\text{FinBW}(\I)$ in the sense of Definition \ref{DefFinBW}.

This subject has already been studied in the literature. There are at least two directions of those studies. The first one, related to the notions of Hindman space and Mr\'{o}wka space, was initiated by M. Kojman. In \cite{KojmanvdW} he considered spaces belonging to FinBW($\mathcal{W}$), where $\mathcal{W}$ denotes the van der Waerden ideal, that is:
$$\mathcal{W}=\left\{A\subseteq\omega:\ \left(\exists k\in\omega\right)\ A\text{ does not contain arithmetic progressions of length }k\right\}.$$
And in \cite[Theorem 3]{KojmanHindman} he showed that FinBW($\mathcal{H}$) is the class of all finite spaces, where $\mathcal{H}$ is the Hindman ideal. He also introduced  there a new class of Hindman spaces, to which we'll return in the last section of this paper. In \cite[Theorem 3]{KojmanShelah} M. Kojman and S. Shelah constructed under the continuum hypothesis (CH) an uncountable separable compact space from FinBW($\mathcal{W}$), which is not a Hindman space. This result has been extended by A. L. Jones to the case of Martin's axiom for $\sigma$-centered notions of forcing (MA($\sigma$-centered)) instead of CH. In both those results, the constructed space is a Mr\'{o}wka space (for the definition of Mr\'{o}wka space see subsection \ref{secMrowka}). Later J. Fla\v{s}kov\'{a} in \cite{Flaskova} examined FinBW($\mathcal{I}$) for $\bf{\Sigma^0_2}$ ideals $\I$ (in her paper spaces from FinBW($\mathcal{I}$) are called $\mathcal{I}$-spaces). She showed under MA($\sigma$-centered) that for every $\bf{\Sigma^0_2}$ ideal there is an uncountable separable compact Mr\'{o}wka space in FinBW($\mathcal{I}$) which is not Hindman. As far as we know, there are no results concerning the necessity of the assumptions CH and MA($\sigma$-centered) in the above studies.

The second direction concerns the following question: for which ideals $\mathcal{I}$ do we have $[0,1]\in\text{FinBW}(\mathcal{I})$? These studies were initiated by R. Filip\'{o}w, N. Mro\.{z}ek, I. Rec\l{}aw and P. Szuca  in \cite{Gdansk} and then continued in \cite{I-selection}, where it is proved, among other things, that if $\I$ is an $\bf{\Sigma^0_2}$ ideal then $[0,1]\in\text{FinBW}(\I|A)$ for all $A\notin\I$ \cite[Proposition 3.4]{Gdansk}. Finally, D. Meza-Alc\'antara in \cite[Section 2.7]{Meza} proved that the ideal $\conv$ on $[0,1]\cap\mathbb{Q}$ generated by all convergent sequences (this ideal is $\bf{\Sigma^0_4}$) is critical for FinBW: $[0,1]\in\text{FinBW}(\I)$ is equivalent to $\conv\not\leq_K\I$, where $\leq_K$ denotes the Kat\v{e}tov preorder on ideals (i.e., $\I\leq_K\J$, if there is a $f:\bigcup\J\to \bigcup\I$ such that $f^{-1}[A]\in\J$ for each $A\in\I$).

This article is organized as follows. The most interesting results are presented in Sections 6 and 10. Section 2 contains some basic definitions needed throughout the paper, that were not mentioned in this Introduction.  
The longest and most technical Section 4 as well as Sections 3 and 5 are devoted to some preliminary discussions. Section 6 contains the main results of this part of the paper. In particular, we prove that $\omega_1$ with the order topology is in FinBW($\mathcal{I}$), for all $\bf{\Pi^0_4}$ ideals $\mathcal{I}$, and find an ideal $\mathcal{BI}$ which serves as the boundary point for FinBW($\mathcal{I}$) containing interesting spaces: Under CH, for every ideal $\mathcal{I}$, $\mathcal{BI}\not\leq_K\mathcal{I}$ if and only if there is an uncountable separable space in FinBW($\mathcal{I}$). In Section 7 we pose some open problems and answer in negative a question of M. Hru\v{s}\'ak and D. Meza-Alc\'antara about extendability to a $\bf{\Pi^0_3}$ ideal. Sections 8 and 9 are devoted to preliminary results concerning the following question: when FinBW($\mathcal{I}$)$\setminus$FinBW($\mathcal{J}$) is nonempty? In Section 10 we prove (under MA($\sigma$-centered)) that for $\bf{\Pi^0_4}$ ideals $\mathcal{I}$ and $\mathcal{J}$ this is equivalent to $\mathcal{J}\not\leq_K\mathcal{I}$. Finally, in our last Section 11 we apply our results in studies of Hindman spaces and in the context of analytic P-ideals.

\section{Preliminaries}

In this section we collect some basic definitions and observations needed throughout the paper, that were not mentioned so far.

\subsection{Basic notions}

By a partition of a set $X$ we mean any family $(X_n)\subseteq\cP(X)$ such that $\bigcup_{n}X_n=X$ and $X_n\cap X_k=\emptyset$ whenever $n\neq k$. In particular, we do not require that the sets $X_n$ are infinite or non-empty.

Let $A,B\in[\omega]^\omega$. We say that $A$ and $B$ are almost disjoint if $A\cap B$ is finite. A family $\mathcal{A}\subseteq[\omega]^\omega$ is called an AD family if the members of $\mathcal{A}$ are pairwise almost disjoint. If moreover $\mathcal{A}$ is a maximal AD family with respect to inclusion, it is called a MAD family. Equivalently, $\mathcal{A}$ is a MAD family if it is an AD family and for each infinite $B\subseteq\omega$ there is an $A\in\mathcal{A}$ with $A\cap B$ infinite.

The restriction of an ideal $\I$ to $Y$ is given by $\I|Y=\{A\cap Y:\ A\in\I\}$. Note that $\I|Y$ is an ideal on $Y$ if and only if $Y\notin\I$. We say that an ideal $\I$ on $X$ is generated by the family $\mathcal{F}\subseteq\mathcal{P}(X)$ if 
$$\I=\left\{A\subseteq X:\ \left(\exists n\in\omega\right)\ \left(\exists F_0,\ldots,F_n\in\mathcal{F}\right)\ A\subseteq F_0\cup\ldots\cup F_n\right\}.$$

\begin{definition}
If $\I$ is an ideal then $\mathcal{D}_{\I}$ is the set of all functions $f:\bigcup\I\to\omega$ such that $f^{-1}[\{n\}]\in\I$ for all $n\in\omega$. In particular, $\mathcal{D}_{\fin}$ is the set of all finite-to-one functions from $\omega$ to $\omega$.
\end{definition}

If $\I$ and $\J$ are ideals on $X$ and $Y$, respectively, then:
\begin{itemize}
\item the product of $\I$ and $\J$ is an ideal on $X\times Y$ given by:
$$\I\otimes\J=\{A\subseteq X\times Y:\ \{x\in X:\ A_{(x)}\notin\J\}\in\I\},$$
where $A_{(x)}=\{y\in Y:\ (x,y)\in A\}$. Also with either $\I$ or $\J$ (but not both) replaced by $\{\emptyset\}$, the above formula defines ideals, denoted $\emptyset\otimes\J$ and $\I\otimes\emptyset$, respectively.
\item the disjoint sum of $\I$ and $\J$ is an ideal on $(\{0\}\times X)\cup(\{1\}\times Y)$ given by:
$$\I\oplus\J=\{A\subseteq (\{0\}\times X)\cup(\{1\}\times Y):\ A_{(0)}\in\I\text{ and }A_{(1)}\in\J\}.$$
\end{itemize}

If $X$ is an infinite countable set then by $\fin(X)$ we will denote the ideal of all finite subsets of $X$. Hence, $\fin=\fin(\omega)$. For an ideal $\I$ and $A\subseteq\bigcup\I$ we say that $A$ is an $\I$-positive set whenever $A\notin\I$. An ideal $\I$ on $X$ is: 
\begin{itemize}
\item tall if for each infinite $A\subseteq X$ there is a $B\subseteq A$ with $B\in\I\setminus\fin(A)$; 
\item a P-ideal if for each $(A_n)\subseteq\I$ there is an $A\in\I$ with $A_n\setminus A$ finite for all $n\in\omega$; 
\item maximal if for any $A\subseteq X$ either $A\in\I$ or $X\setminus A\in\I$ (equivalently, $\I$ is maximal with respect to inclusion). It is known that a maximal ideal cannot be Borel. 
\end{itemize}
We say that $\I$ is extendable to $\J$ if $\I\subseteq\J$. Thus, for instance, the statement "$\I$ is extendable to a P-ideal" means that there is a P-ideal $\J$ with $\I\subseteq\J$. In particular, every ideal is extendable to a maximal ideal.
 
For $\mathcal{A}\subseteq\mathcal{P}(X)$ we write
$$\mathcal{A}^\star=\{X\setminus A:\ A\in\mathcal{A}\}.$$
If $\I$ is an ideal on $X$ then $\I^\star$ is a filter (i.e., a family closed under supersets and finite intersections of its elements). We call it the dual filter of $\I$. Observe that the map $A\mapsto X\setminus A$ is continuous. Thus, $\I$ and $\I^\star$ have the same descriptive complexity. 

Dual filter of a maximal ideal is called an ultrafilter. Recall that an ultrafilter $\mathcal{U}\subseteq\cP(\omega)$ is a P-point if $\cU^\star$ is a P-ideal. It is known that P-points exist under CH as well as under MA($\sigma$-centered).

\subsection{Preorders on ideals}

Let $\I$ and $\J$ be ideals on $X$ and $Y$, respectively (here we make an exception and allow $\I=\mathcal{P}(X)$ or $\J=\mathcal{P}(Y)$ -- this will simplify some of our proofs). We say that:
\begin{itemize} 
\item $\J$ and $\I$ are isomorphic and write $\I\cong\J$, if there is a bijection $f:Y\to X$ such that 
$$\left(\forall A\subseteq X\right)\ f^{-1}[A]\in\J\ \Longleftrightarrow\ A\in\I;$$
\item $\J$ contains an isomorphic copy of $\I$ and write $\I\sqsubseteq\J$, if there is a bijection $f:Y\to X$ such that $f^{-1}[A]\in\J$ for each $A\in\I$;
\item $\J$ is above $\I$ in the Kat\v{e}tov-Blass preorder and write $\I\leq_{KB}\J$, if there is a finite-to-one $f:Y\to X$ (i.e., $f^{-1}[\{x\}]\in\fin(Y)$ for all $x\in X$) such that $f^{-1}[A]\in\J$ for each $A\in\I$;
\item $\J$ is above $\I$ in the Kat\v{e}tov preorder and write $\I\leq_K\J$, if there is an $f:Y\to X$ (not necessary finite-to-one) such that $f^{-1}[A]\in\J$ for each $A\in\I$;
\end{itemize}
Obviously, 
$$\I\cong\J\implies\I\sqsubseteq\J\implies\I\leq_{KB}\J\implies\I\leq_{K}\J.$$
The preorders $\leq_K$ and $\sqsubseteq$ were extensively studied e.g. in \cite{Kat}, \cite{Hrusak}, \cite{Hrusak2}, \cite{WR}, \cite{EUvsSD} and
\cite{Meza}. A property of ideals can often be expressed by finding a critical ideal (in sense of some preorder on ideals) with respect to this property. This approach proved to be especially effective in the context of FinBW ideals.

Note that $\I\leq_{KB}\I|A$ (as witnessed by the identity function) for all ideals $\I$ and all $A\notin\I$.

\begin{definition}[{\cite{HrusakKatetov} or \cite[Subsection 2.1.2]{Meza}}]
An ideal $\I$ is:
\begin{itemize}
\item $\leq_{K}$-uniform if $\I|A\leq_{K}\I$ for all $A\notin\I$; 
\item $\leq_{KB}$-uniform if $\I|A\leq_{KB}\I$ for all $A\notin\I$; 
\item homogeneous if $\I|A$ is isomorphic to $\I$ whenever $A\notin\I$.
\end{itemize}
\end{definition}

Obviously, a homogeneous ideal is $\leq_{KB}$-uniform and a $\leq_{KB}$-uniform ideal is $\leq_{K}$-uniform.

\begin{example}
\label{uniform-ex}
The following ideals are homogeneous:
\begin{itemize}
\item $\mathcal{W}$ (cf. \cite[Example 2.6]{Jacek});
\item $\mathcal{H}$ (cf. \cite[Example 2.6]{Jacek});
\item all maximal ideals (cf. \cite[Example 1.4]{Jacek});
\item $\fin^n$ for all $n\geq 1$, where $\fin^1=\fin$ and $\fin^{n+1}=\fin\otimes\fin^n$ (cf. \cite[Remark after Proposition 2.9]{Jacek}).
\end{itemize}
\end{example}

\begin{example}
\label{I_d is KB}
Define the density zero ideal 
$$\I_{d}=\left\{A\subseteq\omega:\ \lim_{n\to\infty}\frac{|A\cap\{0,1,\ldots,n\}|}{n+1}=0\right\}.$$ 
It is a $\bf{\Pi^0_3}$ P-ideal (cf. \cite[Example 1.2.3(d)]{Farah}). By \cite[Proposition 2.1]{HrusakKatetov}, it is also $\leq_{K}$-uniform (note that this fact is also stated in \cite[Proposition 2.1.11]{Meza}, but with an incorrect proof). Actually, the proof of \cite[Proposition 2.1]{HrusakKatetov} shows even more: $\I_{d}$ is $\leq_{KB}$-uniform.
\end{example}

\subsection{Related notions and their critical ideals}

Consider the ideal:
$$\fin^2=\fin\otimes\fin=\{A\subseteq\omega^2:\ \{n\in\omega:\ A_{(n)}\notin\fin\}\in\fin\}.$$
This ideal is $\bf{\Sigma^0_4}$. It is critical for the following property considered in the literature: We say that an ideal $\I$ on $X$ is a weak P-ideal, if for each partition $(A_n)$ of $X$ into sets belonging to $\I$ there is an $S\notin\I$ with $S\cap A_n$ finite, for all $n\in\omega$ (note that all P-ideals are weak P-ideals).

\begin{proposition}[{Essentially \cite{Reclaw} and \cite{Meza}}]
\label{o}
Let $\I$ be an ideal.
\begin{itemize}
\item[(a)] $[0,1]\in\text{FinBW}(\I)$ if and only if $\conv\not\sqsubseteq\I$;
\item[(b)] $\I$ is a weak P-ideal if and only if $\fin^2\not\sqsubseteq\I$.
\end{itemize}
\end{proposition}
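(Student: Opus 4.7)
The plan is to establish each equivalence by reading an embedding $\sqsubseteq$ as a witnessing object (a sequence in (a) and (b), a partition in (c)) in the ambient structure, and conversely converting a failure of the property into a witnessing bijection.

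For part (c), the partition $(\{n\}\times\omega)_{n\in\omega}$ of $\omega^2$ shows that $\fin^2$ itself fails the weak P-property: each column lies in $\fin^2$, while any set meeting every column in a finite set has only finite columns and thus itself lies in $\fin^2$. If $\fin^2\sqsubseteq\I$ via a bijection $f:X\to\omega^2$, then $A_n:=f^{-1}[\{n\}\times\omega]$ is a partition of $X$ into $\I$-sets inheriting this failure. Conversely, from a partition $(A_n)$ of $X$ into $\I$-sets admitting no $\I$-positive set of finite trace, first absorb the finite pieces into the infinite ones so that every piece becomes infinite --- this merging step is justified by observing that a bad partition must contain infinitely many infinite pieces (otherwise $X$ would itself end up in $\I$). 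Then glue any bijections $A_n\to\{n\}\times\omega$ into a bijection $f:X\to\omega^2$. For $B\in\fin^2$ choose $N$ with $B_{(n)}$ finite for $n\geq N$; then $f^{-1}[B]$ decomposes as a finite union of subsets of $A_0,\ldots,A_{N-1}$ (in $\I$) together with a set meeting every $A_n$ finitely (in $\I$ by hypothesis).

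For part (a), suppose $\conv\sqsubseteq\I$ via a bijection $f:\omega\to[0,1]\cap\bQ$. Viewing $(x_n):=(f(n))$ as a sequence in $[0,1]$, any $A\notin\I$ with $(x_n)_{n\in A}$ convergent to $p$ would force $f[A]$ to be an infinite subset of $[0,1]\cap\bQ$ with $p$ as its unique accumulation point, hence $f[A]\in\conv$ and $A=f^{-1}[f[A]]\in\I$, contradiction. Conversely, start from a bad sequence $(x_n)\subseteq[0,1]$ and replace each $x_n$ by a rational $q_n$ with $|q_n-x_n|<2^{-n}$, which preserves badness. The function $n\mapsto q_n$ already enjoys the Kat\v{e}tov-style preimage property, because any $A\in\conv$ has finitely many accumulation points $p_1,\ldots,p_k$ and $\{n:q_n\in A\}$ splits into finitely many pieces each forming a subsequence of $(q_n)$ converging to some $p_i$, hence lying in $\I$ by badness. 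It remains to promote this function to a bijection $\omega\to[0,1]\cap\bQ$ by redistributing repeated values and inserting missing rationals near the existing accumulation structure. Part (b) follows immediately by applying (a) to each restriction $\I|A$ for $A\notin\I$.

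The main technical obstacle is the bijection-promotion step in (a): the modification must be arranged so that every $\conv$-set's preimage under the revised function still decomposes into pieces already in $\I$. This is achieved by inserting each missing rational adjacent to an existing accumulation point of $(q_n)$, and exploiting that all singleton fibers of the original function automatically belong to $\I$ (singletons are in $\conv$). The partition-cleaning in (c) is the analogous but substantially easier combinatorial adjustment.
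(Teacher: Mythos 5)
The paper itself does not prove this proposition; it simply cites Meza's thesis for (a) and Laczkovich--Rec\l{}aw for (c). So your proposal is a genuine reconstruction, and it is worth assessing on its own terms.

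Your argument for (c) is correct: the forward direction via the column partition $A_n=f^{-1}[\{n\}\times\omega]$ and the converse via absorbing finite pieces into infinite ones (justified by the observation that a bad partition must have infinitely many infinite pieces, else $\bigcup\I\in\I$) and then gluing column bijections both work, and the verification that $f^{-1}[B]\in\I$ for $B\in\fin^2$ by splitting off the tail $\{n\geq N\}$ is fine. Part (b) from (a) is immediate, as you say.

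In part (a), however, there is a real gap in the converse direction, at the step where you claim that ``$\{n:q_n\in A\}$ splits into finitely many pieces each forming a subsequence of $(q_n)$ converging to some $p_i$.'' Let $A\in\conv$ have accumulation points $p_1,\dots,p_k$ with disjoint neighborhoods $U_1,\dots,U_k$, and set $B_i=\{n:q_n\in A\cap U_i\}$. Convergence of $(q_n)_{n\in B_i}$ to $p_i$ requires that for every smaller neighborhood $U'\ni p_i$ the set $\{n\in B_i:q_n\notin U'\}$ be \emph{finite}. But that set is a finite union of singleton fibers $q^{-1}[\{v\}]$ with $v\in A\cap(U_i\setminus U')$, and you only know these fibers are in $\I$ (via badness, since constant sequences converge), not that they are finite. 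An ideal set can perfectly well be infinite, so $(q_n)_{n\in B_i}$ need not converge and you cannot invoke badness to conclude $B_i\in\I$. In short, ``singleton fibers in $\I$'' is weaker than ``$q$ finite-to-one,'' and only the latter makes the pieces converge.

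The fix is cheap and worth making explicit: when approximating, choose the rationals $q_n$ with $|q_n-x_n|<2^{-n}$ to be pairwise distinct, which is possible since each window contains infinitely many rationals. Then $q$ is injective, the sets $\{n\in B_i:q_n\notin U'\}$ are genuinely finite, the pieces converge, and badness forces each $B_i\in\I$. With $q$ already injective, the promotion to a bijection also becomes clean: $\ran(q)$ is an infinite subset of $[0,1]\cap\bQ$, so it contains an infinite subset $R'$ converging to some point of $[0,1]$; then $R'\in\conv$, hence $q^{-1}[R']\in\I$, and one can redefine $q$ on $q^{-1}[R']$ to be a bijection onto $R'\cup\bigl(([0,1]\cap\bQ)\setminus\ran(q)\bigr)$, leaving it untouched elsewhere. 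For any $C\in\conv$ the new preimage is contained in $q^{-1}[C]\cup q^{-1}[R']\in\I$, so $\conv\sqsubseteq\I$. Your phrase about ``inserting missing rationals near the existing accumulation structure'' gestures at this, but the argument needs the injectivity correction before it works at all.
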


\begin{proof}
Item (a) can be found is \cite[Section 2.7]{Meza}. For (b) see the proof of \cite[Lemma 2]{Reclaw}.
\end{proof}

In the whole paper we will use "$\fin^2\not\sqsubseteq\I$" and "$\I$ is a weak P-ideal" interchangeably.

\begin{proposition}[{\cite[Examples 4.1 and 4.4]{Kat}}]
\label{propertyKat}
Let $\I$ be any ideal and $\J=\conv$ or $\J=\fin^2$. The following are equivalent:
\begin{itemize}
\item $\mathcal{J}\leq_{K}\I$;
\item $\mathcal{J}\leq_{KB}\I$;
\item $\mathcal{J}\sqsubseteq\I$.
\end{itemize}
\end{proposition}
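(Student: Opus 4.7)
The implications $\mathcal{J}\sqsubseteq\I \Rightarrow \mathcal{J}\leq_{KB}\I \Rightarrow \mathcal{J}\leq_K\I$ are immediate (a bijection is finite-to-one, and a finite-to-one map is in particular a map). So the content is the reverse direction $\mathcal{J}\leq_K\I \Rightarrow \mathcal{J}\sqsubseteq\I$ for $\mathcal{J}\in\{\conv,\fin^2\}$. The plan in both cases is the same: starting from a witness $f:\bigcup\I\to\bigcup\mathcal{J}$ of the Kat\v{e}tov relation, surgically modify it into a bijection while preserving the pullback condition $f^{-1}[A]\in\I$ for $A\in\mathcal{J}$. The modification exploits the highly structured nature of the critical ideals $\conv$ and $\fin^2$.

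\textbf{Case $\mathcal{J}=\fin^2$.} Set $B_m=f^{-1}[\{m\}\times\omega]$ and $C_{m,k}=f^{-1}[\{(m,k)\}]$, both in $\I$. First, infinitely many $B_m$ must be infinite: if not, taking $N$ so that $B_m$ is finite for all $m\geq N$ and $h(m)=\max\{k:C_{m,k}\neq\emptyset\}$ for such $m$, the set $A=\{(m,k):m\geq N,\,k\leq h(m)\}$ belongs to $\fin^2$ and $f^{-1}[A]=\bigcup_{m\geq N}B_m$; combined with $\bigcup_{m<N}B_m\in\I$ this would force $\bigcup\I\in\I$. Next, by column renumbering and by distributing the finite columns into the infinite ones at sufficiently fresh second coordinates, reduce to the case where every $B_m$ is countably infinite and $f$ is surjective. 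Finally, enumerate $B_m=\{y^m_0,y^m_1,\dots\}$ by listing $C_{m,0},C_{m,1},\dots$ consecutively and define $g(y^m_k)=(m,k)$. If $A\in\fin^2$ with $A\subseteq(F\times\omega)\cup\{(m,k):k\leq h(m)\}$ for $F$ finite, then for each $m$ one finds $k^*(m)$ with $\{y^m_0,\dots,y^m_{h(m)}\}\subseteq C_{m,0}\cup\dots\cup C_{m,k^*(m)}$, so $g^{-1}[A]\subseteq\bigcup_{m\in F}B_m\cup f^{-1}[\{(m,k):k\leq k^*(m)\}]\in\I$.

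\textbf{Case $\mathcal{J}=\conv$.} Fibers $F_q=f^{-1}[\{q\}]$ are in $\I$ for every $q\in\Q\cap[0,1]$, and the same trick as above shows that the set of $q$ with $F_q$ non-empty must itself be $\conv$-positive, since otherwise a covering convergent sequence would pull back to all of $\bigcup\I$. Fix for each $q$ a sequence of distinct rationals $q^q_0,q^q_1,\dots$ converging to $q$, and by a bookkeeping argument partition $\Q\cap[0,1]$ so that the $n$-th rational assigned to the base $q$ equals $q^q_n$. Spreading each $F_q$ along these rationals yields a bijection $g:\bigcup\I\to\Q\cap[0,1]$. For any convergent sequence $S\to p$ in $\Q\cap[0,1]$, the preimage $g^{-1}[S]$ is contained in the $f$-preimage of an enlarged set that clusters only at finitely many points near $p$, hence still lies in $\conv$ and pulls back into $\I$.

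The main obstacle is the middle step in both cases---the merging that makes $f$ surjective and the fibers uniform---because the Kat\v{e}tov condition must survive the reindexing. The fresh indices (for $\fin^2$) and the nearby rationals (for $\conv$) must be chosen spread out enough that no single element of $\mathcal{J}$ captures too many of the displaced points at once.
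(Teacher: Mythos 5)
The chain $\J\sqsubseteq\I\implies\J\leq_{KB}\I\implies\J\leq_K\I$ is indeed immediate. Note that the paper itself does not prove this proposition (it cites \cite[Examples 4.1 and 4.4]{Kat}), so there is no in-paper argument to compare yours against; I assess the attempt on its own.

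For $\J=\fin^2$ your plan and the closing computation are sound, but the step you yourself flag as the obstacle --- distributing the finite columns into the infinite ones at ``sufficiently fresh'' second coordinates --- is left open, and the worry about freshness is in fact a red herring. The observation that closes the gap cleanly is that the union of all finite columns already lies in $\I$: with $F=\{m:B_m\text{ finite}\}$ and $h(m)=\max\{k:C_{m,k}\neq\emptyset\}$ for $m\in F$ (say $h(m)=0$ if $B_m=\emptyset$), the set $\{(m,k):m\in F,\ k\le h(m)\}$ belongs to $\emptyset\otimes\fin\subseteq\fin^2$, so its $f$-preimage $\bigcup_{m\in F}B_m$ is in $\I$. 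Once this is noted you may absorb all finite columns arbitrarily into the infinite ones (even dump them all into one), and then \emph{any} bijections of the resulting column-pieces $G_i$ onto $\{i\}\times\omega$ work: given $A\in\fin^2$, split it into finitely many infinite rows and a piece of $\emptyset\otimes\fin$; the former pulls back into finitely many $G_i\in\I$, and the latter pulls back into a set meeting each $G_i$ finitely, hence into $f^{-1}$ of some $\fin^2$-set.

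The $\conv$ case has a genuine gap. First, your construction implicitly assumes every fiber $F_q$ is infinite so it can be spread bijectively onto a sequence converging to $q$; but $f$ may be injective, with every fiber a singleton, and --- unlike for $\fin^2$ --- there is no automatic reason for the union of small fibers to lie in $\I$, so some merging is unavoidable and is not addressed. Second, and more seriously, the claim that $g^{-1}[S]$ is contained in the $f$-preimage of a set ``that clusters only at finitely many points near $p$'' does not follow from what you have set up. That enlarged set is $B=\{q: S\cap\{q^q_n:n\}\neq\emptyset\}$, and whether $B\in\conv$ depends entirely on the choice of the sequences $q^q_n$: with a careless partition of $\Q\cap[0,1]$ the early terms $q^q_0,q^q_1,\dots$ can land anywhere, and $B$ can be the whole $\conv$-positive set $R=f[\bigcup\I]$. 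To make this work one needs quantitative control built into the bookkeeping --- for instance, enumerate $\Q\cap[0,1]=\{r_n\}$ and insist that whenever $r_n$ is assigned to base $q$ one has $|r_n-q|<1/n$, so that the induced map $s\mapsto q(s)$ moves each $s$ by $o(1)$ and therefore carries convergent sequences to sets with exactly one accumulation point. None of this is in your sketch, and without it the argument does not close. (An alternative route avoids the construction entirely: by \cite[Theorem 2.4.3]{Meza}, $\conv\leq_K\I$ is equivalent to the existence of a countable family $\{X_n\}$ splitting every $\I$-positive set into two infinite pieces, and the same combinatorial condition characterizes $\conv\sqsubseteq\I$ via the FinBW characterization; but that is a different proof, not a repair of yours.)
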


\begin{proposition}[{\cite[Theorem 2.8.7]{Meza} and Proposition \ref{propertyKat}}]
\label{P-point}
Let $\cU$ be an ultrafilter. The following are equivalent:
\begin{itemize}
\item $\cU$ is a P-point;
\item $\conv\not\leq_K\cU^\star$;
\item $\conv\not\sqsubseteq\cU^\star$;
\item $\fin^2\not\leq_K\cU^\star$;
\item $\fin^2\not\sqsubseteq\cU^\star$.
\end{itemize}
\end{proposition}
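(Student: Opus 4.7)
My plan is to reduce the five conditions to two core equivalences using the preceding propositions. Proposition \ref{propertyKat} gives $(2) \iff (3)$ and $(4) \iff (5)$, and Proposition \ref{o}(c) tells us that $(5)$ is equivalent to $\cU^\star$ being a weak P-ideal. So I need to prove (a) $\cU$ is a P-point iff $\cU^\star$ is a weak P-ideal, and (b) $\conv \not\leq_K \cU^\star$ iff $\fin^2 \not\leq_K \cU^\star$; together with the above these chain all five statements together.

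For (a), the forward direction is trivial since every P-ideal is a weak P-ideal. For the converse, given $(B_n) \subseteq \cU^\star$, I replace each $B_n$ by $\bigcup_{k \leq n} B_k$ to assume $(B_n)$ is increasing. If $\bigcup_n B_n \in \cU^\star$, then it is itself the pseudo-union. Otherwise, by maximality of $\cU$, the set $\omega \setminus \bigcup_n B_n$ lies in $\cU^\star$, so $\{\omega \setminus \bigcup_n B_n\} \cup \{B_n \setminus B_{n-1} : n \geq 0\}$ (with $B_{-1} = \emptyset$) is a partition of $\omega$ into elements of $\cU^\star$. The weak P-ideal property then produces $S \in \cU$ meeting each piece in a finite set, and $B := \omega \setminus S \in \cU^\star$ satisfies $B_n \setminus B = B_n \cap S$ finite for each $n$.

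For (b), the direction $(4) \implies (2)$ (i.e.\ $\conv \leq_K \cU^\star \implies \fin^2 \leq_K \cU^\star$) follows by composing Katětov reductions with the well-known fact that $\fin^2 \leq_K \conv$ (send the $n$-th column of $\omega \times \omega$ to a sequence of distinct rationals converging to $1/(n+1)$). For the other direction $(1) \implies (2)$, I argue by contradiction: suppose $\cU$ is a P-point and $f : \omega \to [0,1] \cap \mathbb{Q}$ witnesses $\conv \leq_K \cU^\star$. By compactness of $[0,1]$, the sequence $(f(n))_n$ has a $\cU$-ultrafilter limit $x \in [0,1]$, so $V_k := f^{-1}[(x - 1/k, x + 1/k)] \in \cU$ for every $k$. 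Applying the P-point property to $(V_k)$ produces $A \in \cU$ with $A \setminus V_k$ finite for all $k$, so $(f(n))_{n \in A}$ converges to $x$ in $[0,1]$. If $f[A]$ is infinite, it is then a convergent sequence of rationals, hence $f[A] \in \conv$; if $f[A]$ is finite, some value $q$ is attained on a $\cU$-large subset of $A$ and $\{q\} \in \conv$. In either case we obtain $C \in \conv$ with $f^{-1}[C] \in \cU \cap \cU^\star$, a contradiction.

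The main obstacle is the direction $(1) \implies (2)$; the key trick is to fix the target point $x$ via an ultrafilter limit on the compact space $[0,1]$ \emph{before} diagonalizing, so that one has a canonical countable family $(V_k)$ of $\cU$-large sets to feed into the P-point property.
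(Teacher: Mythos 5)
The paper itself supplies no proof here—it cites Meza's thesis and Proposition \ref{propertyKat}—so your argument is necessarily a genuinely different, self-contained route. The decomposition via (a) ``P-point $\iff$ weak P-ideal'' plus Proposition \ref{o}(c) is clean, and the (1)$\implies$(2) argument using the $\cU$-limit of $(f(n))_n$ in the compact space $[0,1]$ and then diagonalizing the neighborhood preimages $(V_k)$ via the P-point property is correct and nicely packaged.

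There is, however, a real error in part (b). You assert the ``well-known fact that $\fin^2\leq_K\conv$''; this is false. Since $\conv$ is a weak P-ideal, Propositions \ref{o}(c) and \ref{propertyKat} give $\fin^2\not\sqsubseteq\conv$ and hence $\fin^2\not\leq_K\conv$. Moreover the construction you describe (a map from $\omega^2$ into $\Q\cap[0,1]$ sending the $n$-th column onto a sequence of distinct rationals converging to $1/(n+1)$) is a function in the direction $\omega^2\to\Q\cap[0,1]$, which by the definition of $\leq_K$ can only witness $\conv\leq_K\fin^2$, not the reverse. And indeed it does: for $A\in\conv$, the set $A$ has only finitely many accumulation points, so only finitely many columns of $f^{-1}[A]$ can be infinite, giving $f^{-1}[A]\in\fin^2$. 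What this correctly establishes is therefore (2)$\implies$(4) (contrapositive: $\fin^2\leq_K\cU^\star\implies\conv\leq_K\cU^\star$), not (4)$\implies$(2) as you labeled it. Fortunately your overall chain still closes: (a) and the citations give $(1)\iff(4)\iff(5)$ and $(2)\iff(3)$, your direct argument gives $(1)\implies(2)$, and the corrected reduction gives $(2)\implies(4)$, so all five are equivalent. You should fix the statement of the Kat\v{e}tov inequality and the implication label, but the underlying constructions are sound.
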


\subsection{Mr\'{o}wka spaces}
\label{secMrowka}

For an infinite AD family $\mathcal{A}$ of infinite subsets of $\omega$, define a topological space $\Phi(\mathcal{A})$ as follows: 
\begin{itemize}
\item the underlying set of $\Phi(\mathcal{A})$ is $\omega\cup\mathcal{A}\cup\{\infty\}$, 
\item the points of $\omega$ are isolated, 
\item each basic neighborhood of $A\in\mathcal{A}$ is of the form $\{A\}\cup
(A\setminus F)$, where $F\subseteq\omega$ is finite,
\item each basic neighborhood of $\infty$ is of the form 
$$\{\infty\}\cup(\mathcal{A}\setminus G)\cup\left(\omega\setminus\left(F\cup\bigcup_{A\in G}A\right)\right),$$
where both $F\subseteq\omega$ and $G\subseteq\mathcal{A}$ are finite.
\end{itemize}
Equivalently, $\Phi(\mathcal{A})$ is the one-point compactification of the space with $\omega\cup\mathcal{A}$ as the underlying set and basic neighborhoods of $A\in\mathcal{A}$ and $n\in\omega$ as above (this space was introduced in \cite{Mrowka}). Indeed, if $C$ is a compact subset of the latter space, then $C\cap\mathcal{A}$  and $(C\cap\omega)\setminus\bigcup_{A\in C\cap\mathcal{A}}A$ have to be finite. 

In this paper a space of the form $\Phi(\mathcal{A})$, for some AD family $\mathcal{A}$, will be called Mr\'{o}wka space.

Each Mr\'{o}wka space is Hausdorff, separable and compact. Moreover, it is first countable at every point of $\Phi(\mathcal{A})\setminus\{\infty\}$. If $\mathcal{A}$ is a MAD family, then $\Phi(\mathcal{A})$ is sequentially compact (see \cite[Section 2]{FT} or \cite[Theorem 6]{KojmanvdW}). Observe also that $\Phi(\mathcal{A})$ is countable if and only if $\mathcal{A}$ is countable. In particular, if $\mathcal{A}$ is a MAD family, then $\Phi(\mathcal{A})$ is uncountable (as MAD families are uncountable). For more on Mr\'{o}wka spaces see also \cite{HrusakMrowka} or \cite{Mrowka}.

\section{Boring spaces}

In this section we introduce a new class of topological spaces, which are in some sense the most boring examples of infinite sequentially compact spaces.

\begin{definition}
We say that a topological space $X$ is boring if $X$ is sequentially compact and there is a finite set $F\subseteq X$ such that each injective convergent sequence in $X$ converges to some point from $F$.
\end{definition}

\begin{proposition}\
\label{boringspaces}
\begin{itemize}
\item[(i)] For each cardinal $\kappa$, there is a Hausdorff boring space of cardinality $\kappa$. 
\item[(ii)] Let $X$ be an infinite Hausdorff space. Then $X$ is a boring space if and only if $X$ is a disjoint union of finitely many one-point compactifications of discrete spaces.
\item[(iii)] Each boring space is compact.
\item[(iv)] A boring space is separable if and only if it is countable.
\end{itemize}
\end{proposition}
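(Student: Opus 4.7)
My plan is to prove (ii) as the main result, from which (iii) and (iv) follow at once, and to verify (i) by exhibiting explicit examples. For (i), I take the discrete space of cardinality $\kappa$ when $\kappa$ is finite (trivially boring, since no non-eventually-constant convergent sequence exists), and the one-point compactification $D_\kappa\cup\{\infty\}$ of a discrete space of cardinality $\kappa$ when $\kappa$ is infinite. The latter is Hausdorff and sequentially compact (any injective sequence in $D_\kappa$ is forced to converge to $\infty$), and the isolated points of $D_\kappa$ admit only eventually constant convergent sequences, so $F=\{\infty\}$ witnesses boringness.

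For the easy direction $(\Leftarrow)$ of (ii), a finite disjoint union $X=\bigsqcup_{i=1}^{k}X_i$ of one-point compactifications $X_i=D_i\cup\{\infty_i\}$ is Hausdorff and sequentially compact (any sequence concentrates in some $X_i$, which is itself sequentially compact), and is witnessed to be boring by $F=\{\infty_1,\dots,\infty_k\}$: a non-eventually-constant convergent sequence eventually sits inside the single component $X_j$ containing its limit, and there its limit is forced to be the non-isolated point $\infty_j$.

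For the hard direction $(\Rightarrow)$ of (ii), suppose $X$ is infinite Hausdorff boring with finite witness $F$. The crucial claim is that \emph{every point $x\in X\setminus F$ is topologically isolated in $X$}. To prove it, I use Hausdorffness to choose, for each $y\in F$, disjoint open sets $V_y\ni x$ and $W_y\ni y$, and set $U=\bigcap_{y\in F}V_y$. Since $U\subseteq X\setminus W_y$ (a closed set) for each $y\in F$, we get $\overline{U}\subseteq X\setminus W_y$ and in particular $y\notin\overline{U}$; hence $\overline{U}\cap F=\emptyset$. If $x$ were not isolated, Hausdorffness forces $U$ to be infinite (a non-isolated point in a Hausdorff space cannot have a finite open neighborhood, since one could otherwise separate it from each of its finitely many neighbors), so I pick an injective sequence $(u_n)$ in $U\setminus\{x\}$, extract a convergent subsequence $u_{n_k}\to z$ by sequential compactness, and obtain $z\in\overline{U}$; boringness forces $z\in F$, contradicting $\overline{U}\cap F=\emptyset$.

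Once this claim is in place, the rest is bookkeeping: separate $F=\{y_1,\dots,y_k\}$ by pairwise disjoint open $V_1,\dots,V_k$ with $V_j\cap F=\{y_j\}$; the remainder $R=X\setminus\bigcup_j V_j$ is a closed set of isolated points disjoint from $F$, so an injective sequence in $R$ would have a subsequential limit in $R\cap F=\emptyset$, a contradiction --- hence $R$ is finite and can be absorbed into $V_1$. The same principle (any infinite closed subset of $V_j\setminus\{y_j\}$ would accumulate at the only point of $F\cap V_j$, namely $y_j$) shows that each $V_j$ is the one-point compactification of the discrete space $V_j\setminus\{y_j\}$. Parts (iii) and (iv) are then immediate: a finite disjoint union of one-point compactifications is compact, and is separable iff each underlying discrete piece is countable, which is equivalent to $X$ being countable. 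The main obstacle is the crucial claim above: in a general Hausdorff sequentially compact space a non-isolated point need not be the limit of any non-eventually-constant convergent sequence (e.g., $\omega_1\in[0,\omega_1]$ in the order topology), so the proof must exploit the joint strength of Hausdorffness, finiteness of $F$, and boringness to produce an open neighborhood of $x$ whose closure already avoids $F$, as above.
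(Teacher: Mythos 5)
Your proof is correct and follows essentially the same route as the paper's: isolate every point of $X\setminus F$ by combining Hausdorff separation of $x$ from the finite set $F$ with sequential compactness and boringness, then decompose $X$ into disjoint open pieces around the points of $F$ and absorb the finite remainder. The only cosmetic difference is that you argue by contradiction (not isolated $\Rightarrow$ every neighborhood infinite $\Rightarrow$ a forbidden convergent injective sequence), whereas the paper shows directly that the separating neighborhood $V$ is finite and then peels off the finitely many other points; both establish the same claim and the remainder of the argument is identical.
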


\begin{proof}
(i): If $\kappa$ is finite, it suffices to take any discrete space of cardinality $\kappa$. If $\kappa$ is an infinite cardinal number, fix a set $Y$ of cardinality $\kappa$ and let $X$ be the one-point compactification of the space $(Y,\cP(Y))$. Then $X$ is as needed.

(ii): It is obvious that each disjoint union of finitely many one-point compactifications of discrete spaces is boring. Thus, we only need to prove the opposite implication.

Let $(X,\tau)$ be Hausdorff and boring. Let $\{x_0,\ldots,x_n\}\subseteq X$ be such that each injective convergent sequence in $X$ converges to some $x_i$, for $i\leq n$. Firstly, we will show that each $x\in X\setminus \{x_0,\ldots,x_n\}$ is isolated.

Fix any $x\in X\setminus \{x_0,\ldots,x_n\}$. Since $X$ is Hausdorff, there are pairwise disjoint open sets $V,V_0,\ldots,V_n$ such that $x\in V$ and $x_i\in V_i$, for $i\leq n$. Note that $V$ is finite, as otherwise it would contain an injective sequence which does not converge to any $x_i$. For each $y\in V\setminus\{x\}$ find open $U_y$ with $x\in U_y$ and $y\notin U_y$. Then $V\cap\bigcap_{y\in V\setminus\{x\}}U_y=\{x\}$ is open. Hence, $x$ is an isolated point.

Now we show that $X$ is a disjoint union of finitely many one-point compactifications of discrete spaces. Since $X$ is Hausdorff, there are pairwise disjoint open sets $U_0,\ldots,U_n$ such that $x_i\in U_i$, for $i\leq n$. Without loss of generality we may assume that $X=U_0\cup\ldots\cup U_n$. Indeed, the set $X\setminus (U_0\cup\ldots\cup U_n)$ is finite (otherwise it would contain an injective sequence with no convergent subsequence), so open (by the above paragraph) and we can add it to $U_0$. We claim that $U_i$ with the topology $\{V\cap U_i:\ V\in\tau\}$ is a one-point compactification of a discrete space, for each $i\leq n$. 

Fix $i\leq n$. We already know that each $x\in U_i\setminus\{x_i\}$ is isolated. Thus, we only need to show that for each open neighborhood $V$ of $x_i$ the set $U_i\setminus V$ is finite. This is true as otherwise $U_i\setminus V$ would contain an injective sequence with no convergent subsequence (as this sequence is outside some neighborhood of $x_j$, for each $j\leq n$), which contradicts sequential compactness of $X$.

(iii) and (iv): Straightforward (using item (ii)). 
\end{proof}

\begin{proposition}
\label{boringspaces2}
If $\fin^2\not\sqsubseteq\I$ then each boring space is in FinBW($\I$).
\end{proposition}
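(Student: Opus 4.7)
The plan is to combine the structure theorem for boring spaces from Proposition \ref{boringspaces}(ii) with the weak P-ideal reformulation from Proposition \ref{o}(c): the hypothesis $\fin^2\not\sqsubseteq\I$ tells us that every countable partition of $\bigcup\I$ into $\I$-sets admits an $\I$-positive selector meeting each piece in a finite set.

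Given a boring space $X$ and a sequence $(x_n)_{n\in M}\subseteq X$, I would first write $X=U_0\sqcup\cdots\sqcup U_k$, where each $U_i$ is the one-point compactification of a discrete space with point at infinity $x_i$, and then split $M=M_0\sqcup\cdots\sqcup M_k$ with $M_i=\{n\in M:x_n\in U_i\}$. Since $M\notin\I$ and $\I$ is closed under finite unions, at least one $M_i\notin\I$. For each such $i$, consider the fibers $B_y=\{n\in M_i:x_n=y\}$ for $y\in U_i$; if some $B_y\notin\I$ then the constant subsequence $(x_n)_{n\in B_y}$ already witnesses the conclusion, so I may assume every such $B_y$ lies in $\I$. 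In particular, only countably many $B_y$ are nonempty, and
$$\mathcal{P}=\bigcup_{M_i\notin\I}\{B_y:y\in U_i\}\ \cup\ \{M_j:M_j\in\I\}$$
is a countable partition of $M$ into members of $\I$.

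Next I would apply the weak P-ideal property to $\mathcal{P}$, obtaining $S\notin\I$ with $S\cap P$ finite for every $P\in\mathcal{P}$. Because there are only finitely many $j$ with $M_j\in\I$, the set $\bigcup_{M_j\in\I}(S\cap M_j)$ is a finite union of finite sets, hence finite; so $S$ minus this finite set is still $\I$-positive and is contained in $\bigcup_{M_i\notin\I}M_i$, and a finite pigeonhole gives some $i$ with $M_i\notin\I$ and $A:=S\cap M_i\notin\I$. Convergence of $(x_n)_{n\in A}$ to $x_i$ is then immediate: every open neighborhood $V$ of $x_i$ has $U_i\setminus V$ finite (by the definition of the one-point compactification of a discrete space), so $\{n\in A:x_n\notin V\}=\bigcup_{z\in U_i\setminus V}(A\cap B_z)$ is a finite union of finite sets.

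The one point that needs care is how the partition $\mathcal{P}$ is built: by keeping each $M_j\in\I$ as a single piece rather than subdividing it, the finiteness condition produced by the weak P-ideal property forces $S$ to concentrate inside $\bigcup_{M_i\notin\I}M_i$, and this is exactly what allows the finite pigeonhole to land inside a single $M_i\notin\I$. Once that reduction is in place, the convergence check is essentially a restatement of how neighborhoods of the point at infinity look.
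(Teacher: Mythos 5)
Your proof is correct and follows essentially the same route as the paper's: decompose the boring space via Proposition \ref{boringspaces}(ii), dispose of the case of an $\I$-positive fiber, then partition the index set into $\I$-pieces, apply the weak P-ideal property, and pigeonhole across the finitely many components. The only cosmetic difference is that you keep the blocks $M_j\in\I$ as single partition pieces rather than subdividing them into fibers, which the paper's proof does uniformly; both choices lead to the same conclusion.
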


\begin{proof}
Fix a boring space $X$. By item (ii) of Proposition \ref{boringspaces}, $X$ is a disjoint union of one-point compactifications of discrete spaces. Thus, $X=\bigcup_{i\leq n}X_i$, where $X_i\cap X_j=\emptyset$ whenever $i\neq j$ and for each $i\leq n$ there is an $x_i$ which is a limit point of each injective convergent sequence in $X_i$. Fix any $f:\omega\to X$. If there is an $x\in X$ with $f^{-1}[\{x\}]\notin\I$, then $f|f^{-1}[\{x\}]$ converges to $x$ and we are done. On the other hand, if $f^{-1}[\{x\}]\in\I$ for all $x\in X$, then $(f^{-1}[\{x\}])_{x\in X}$ is a partition of $\omega$ into sets belonging to $\I$. As $\fin^2\not\sqsubseteq\I$, there is an $A\notin\I$ such that $f|A$ is finite-to-one. Since $A\notin\I$ and $A=\bigcup_{i\leq n}(A\cap f^{-1}[X_i])$, there is an $i\leq n$ with $B=A\cap f^{-1}[X_i]\notin\I$. Then $f|B$ converges to $x_i$ and we are done.
\end{proof}

\section{Boring ideals}

In this section we provide the combinatorial background needed in our considerations.

\subsection{The critical ideal}

\begin{definition}
We define the ideal $\BI=(\emptyset\otimes\fin^2)\cap(\fin\times\fin(\omega^2))$. Equivalently, 
$$\mathcal{BI}=\left\{A\subseteq\omega^3:\ \left(\exists k\right)\left(\left(\forall i<k\right)A_{(i)}\in\fin^2\ \wedge\ \left(\forall i\geq k\right)A_{(i)}\text{ is finite}\right)\right\}.$$
\end{definition}

\begin{remark}
\label{rem-Borel}
The ideal $\BI$ is $\bf{\Sigma^0_4}$, as
$$\mathcal{BI}=\left\{A\subseteq\omega^3:\ \left(\exists k\right)\left(\left(\left(\forall l<k\right)\left(\exists m\right)\left(\forall j>m\right)\left(\exists n\right)\left(\forall i>n\right)(l,j,i)\notin A\right) \right.\right.$$
$$\left.\left.\wedge\ \left(\left(\forall l\geq k\right)\left(\exists n,m\right)\left(\forall i,j\right)(j>m\ \vee\ i>n)\implies (l,j,i)\notin A\right)\right)\right\}.$$
Moreover, it is easy to see that $\BI$ is tall. Indeed, for each infinite $A\subseteq\omega^3$ either $A_{(i)}$ is infinite for some $i\in\omega$ or $A$ intersects infinitely many $\{i\}\times\omega^2$. In the former case use tallness of $\fin^2$ to get an infinite $B\subseteq A_{(i)}$ with $B\in\fin^2$ and conclude that $\{i\}\times B$ is an infnite subset of $A$ belonging to $\BI$. In the latter case find any selectora $S\subseteq\omega^3$ of the family $(A\cap(\{i\}\times\omega^2))$ and note that $S\subseteq A$ and $S\in\BI$.
\end{remark}

The next definition is fundamental for our considerations.

\begin{definition}
We say that $\I$ is boring if $\BI\sqsubseteq \I$. Otherwise we say that $\I$ is unboring.
\end{definition}

Throughout the entire paper we may use the next result without any reference.

\begin{proposition}
\label{q}
The following are equivalent for any ideal $\I$:
\begin{itemize}
\item[(i)] $\BI\not\leq_K \I$;
\item[(ii)] $\BI\not\leq_{KB} \I$;
\item[(iii)] $\BI\not\sqsubseteq \I$;
\item[(iv)] for any partition $(X_{(i,j)})$ of $\bigcup\I$ into sets belonging to $\I$ there is an $A\notin\I$ such that $A\cap X_{(i,j)}$ is finite for all $(i,j)\in\omega^2$ and $A\cap \bigcup_{j}X_{(i,j)}$ is finite for almost all $i\in\omega$;
\item[(v)] for each $f:\bigcup\I\to\omega^2$ there is an $A\notin\I$ such that $f[A]\in\fin^2$ and $f|A$ is either constant or finite-to-one.
\end{itemize}
\end{proposition}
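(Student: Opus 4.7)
The implications (i) $\Rightarrow$ (ii) $\Rightarrow$ (iii) are immediate contrapositives of $\sqsubseteq \Rightarrow \leq_{KB} \Rightarrow \leq_K$. The substance of the argument is the cycle (i) $\Rightarrow$ (v) $\Rightarrow$ (iv) $\Rightarrow$ (i), which yields the equivalence of (i), (iv), (v); the non-trivial closing direction (iii) $\Rightarrow$ (i) parallels Proposition~\ref{propertyKat} for $\fin^2$ and is the main obstacle.

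\textbf{(i) $\Rightarrow$ (v).} Given $f: \bigcup\I \to \omega^2$, if some fiber $Y_{(i,j)} := f^{-1}(\{(i,j)\})$ is not in $\I$, set $A := Y_{(i,j)}$ (the constant case). Otherwise enumerate each fiber $Y_{(i,j)} = \{y^{(i,j)}_\ell\}_\ell$ and define $\tilde f(y^{(i,j)}_\ell) := (i,j,\ell)$, an injection of $\bigcup\I$ into $\omega^3$. By (i) there is $B \in \BI$ with $\tilde f^{-1}[B] \notin \I$. Since $B \in \BI$, for each $i$ the set $J_i := \{j : B_{(i)(j)} \text{ is infinite}\}$ is finite (as $B_{(i)} \in \fin^2$) and is empty once $B_{(i)}$ itself is finite, i.e.\ for $i \geq k$ for some $k$. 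The ``bad lines'' $L := \bigcup_{i<k,\, j \in J_i} \{i\}\times\{j\}\times\omega$ thus form a finite union, so $\tilde f^{-1}[L] = \bigcup_{i<k,\, j \in J_i} Y_{(i,j)}$ is a finite union of $\I$-sets, hence in $\I$. Setting $B' := B \setminus L \in \BI$, every $B'_{(i)(j)}$ is finite; then $A := \tilde f^{-1}[B'] \notin \I$, the map $f|A$ is finite-to-one (each fiber contributes at most $|B'_{(i)(j)}| < \omega$ points), and $f[A] \in \fin^2$ (because $B'_{(i)}$ is finite for almost all $i$).

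\textbf{(v) $\Rightarrow$ (iv) and (iv) $\Rightarrow$ (i).} For (v) $\Rightarrow$ (iv), define $g(x) := (i,j)$ on $X_{(i,j)}$; the constant case of (v) is ruled out since every $X_{(i,j)} \in \I$, so $g|A$ is finite-to-one with $g[A] \in \fin^2$, and pulling back gives both required conditions. For (iv) $\Rightarrow$ (i), given an arbitrary $f: \bigcup\I \to \omega^3$, the sets $X_{(i,j)} := f^{-1}[\{i\}\times\{j\}\times\omega]$ partition $\bigcup\I$ into $\I$-sets (since $\{i\}\times\{j\}\times\omega \in \BI$), and (iv) yields $A \notin \I$ whose two properties force $B := f[A] \in \BI$; then $A \subseteq f^{-1}[B]$, which witnesses $\BI \not\leq_K \I$.

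\textbf{Closing via (iii) $\Rightarrow$ (i).} The explicit bijection $(m,n) \mapsto (m, \pi(n))$, for any bijection $\pi: \omega \to \omega^2$, witnesses $\BI \sqsubseteq \fin^2$: for $A \in \BI$ the preimage has $m$-th row equal to $\pi^{-1}[A_{(m)}]$, of cardinality $|A_{(m)}|$, finite for almost all $m$. Composing bijections then shows that $\fin^2 \sqsubseteq \I$ would imply $\BI \sqsubseteq \I$; so (iii) yields $\fin^2 \not\sqsubseteq \I$, and Proposition~\ref{o}(c) gives that $\I$ is a weak P-ideal. To promote a Kat\v{e}tov reduction $f: \bigcup\I \to \omega^3$ of $\BI \leq_K \I$ to an isomorphic embedding, I would start from the partition $Z_{(i,j,k)} := f^{-1}(\{(i,j,k)\}) \in \I$, split oversized fibers and reassign the surpluses to empty slots \emph{within a common first coordinate}. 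The weak P-ideal property supplies the combinatorial slack for the reassignment, while the ``within the first coordinate'' constraint ensures, via the structure of $\BI$ (only finitely many thick layers, each in $\fin^2$), that for every $B \in \BI$ the symmetric difference $g^{-1}[B] \triangle f^{-1}[B]$ lies in a finite union of fibers $Z_{(i,j,k)}$ and hence in $\I$. The main technical difficulty is coordinating this local reshuffling consistently across all rows so that the resulting $g$ is a bona fide bijection onto $\omega^3$ witnessing $\BI \sqsubseteq \I$.
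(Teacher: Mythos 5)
Your cycle (i) $\Rightarrow$ (v) $\Rightarrow$ (iv) $\Rightarrow$ (i) is correct and nicely argued (the ``bad lines'' shrinking in (i) $\Rightarrow$ (v) is exactly the right move, and the (v) $\Rightarrow$ (iv) $\Rightarrow$ (i) steps check out). The trivial chain (i) $\Rightarrow$ (ii) $\Rightarrow$ (iii) is also fine. The genuine gap is in the closing direction (iii) $\Rightarrow$ (i): what you write there is an announced plan, not an argument, and you yourself flag the unresolved ``main technical difficulty.'' The difficulty is real: moving surplus points to empty slots ``within a common first coordinate'' controls what happens for each fixed $i$, but the total set of displaced points is a union over all $i$ of a set in $\I$, which is merely a countable union of $\I$-sets and need not itself be in $\I$; so the claimed bound on $g^{-1}[B] \triangle f^{-1}[B]$ does not follow from what you've said. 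Promoting a Kat\v{e}tov witness to a bijective one is precisely the nontrivial content of the proposition (the remark after it says $\BI$ ``has the property Kat''), and one cannot wave it through.

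The paper avoids a direct (iii) $\Rightarrow$ (i) by instead proving (iii) $\Rightarrow$ (iv), and the tool that makes this work is a cited external result: since $\BI$ is tall (Remark~\ref{rem-Borel}), \cite[Lemma 3.3]{Kat} says that $\BI \sqsubseteq \I$ is equivalent to the existence of an \emph{injective} map witnessing $\BI \leq_K \I$. Given a partition $(X_{(i,j)})$, one builds a specific injective $f$ with $f[X_{(i,j)}] \subseteq \{(i,j)\}\times\omega$; by that lemma and (iii), $f$ is not a Kat\v{e}tov witness, so some $B \in \BI$ has $f^{-1}[B] \notin \I$, and (after discarding the finitely many fat columns, as in your (i) $\Rightarrow$ (v) argument) this produces the set $A$ required by (iv). If you want a self-contained proof, you would need to either reproduce the tallness-plus-injectivity lemma or find some replacement for it; the local reshuffling you sketch is not a substitute as written.
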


\begin{proof}
Without loss of generality, we can assume that $\I$ is an ideal on $\omega$.

(i)$\implies$(ii): Obvious.

(ii)$\implies$(iii): Obvious.

(iii)$\implies$(iv): We will show that negation of (iv) implies $\BI\sqsubseteq\I$. Suppose that there is a partition $(X_{(i,j)})$ of $\omega$ into sets belonging to $\I$ such that if $A\cap X_{(i,j)}$ is finite for all $(i, j)\in\omega^2$ and $A\cap\bigcup_j X_{(i,j)}$ is finite for almost all $i\in\omega$ then $A\in\I$. Let $f : \omega\to\omega^3$ be any injective function satisfying $f[X_{(i,j)}]\subseteq\{(i, j)\}\times\omega$.

Observe that if $B\in\BI$ then $f^{-1}[B]\in\I$. Indeed, for each $B\in\BI$ we can find $k\in\omega$ such that $B_{(i)}\in\fin^2$ for all $i < k$ and $B_{(i)}$ is finite for all $i\geq k$. Thus, for every $i < k$ there is a $j_i\in\omega$ such that $B_{(i,j)}\in\fin$ for all $j>j_i$. Hence,
$$f^{-1}\left[B\cap\bigcup_{i<k}\bigcup_{j<j_i}(\{(i,j)\}\times\omega)\right]\subseteq\bigcup_{i<k}\bigcup_{j<j_i} X_{(i,j)}\in\I.$$
Moreover, $B'=B\setminus\bigcup_{i<k}\bigcup_{j<j_i}(\{(i,j)\}\times\omega)$ intersects each $\{(i,j)\}\times\omega$ in a finite set (so also $f^{-1}[B']\cap X_{(i,j)}\in\fin$ for all $(i, j)\in\omega^2$) and $B'\cap\{i\}\times\omega^2=B'\cap\bigcup_j(\{(i,j)\}\times\omega)$ is finite for all $i\geq k$ (so $f^{-1}[B']\cap\bigcup_j X_{(i,j)}$ is finite for almost all $i\in\omega$). Thus, $f^{-1}[B']\in\I$, by our assumption and we can conclude that
$$f^{-1}[B] = f^{-1}[B']\cup f^{-1}\left[B\cap\bigcup_{i<k}\bigcup_{j<j_i}(\{(i,j)\}\times\omega)\right]\in\I.$$

As $\BI$ is tall and $f$ is injective such that $f^{-1}[B]\in\I$ for all $B\in\BI$, using
\cite[Lemma 3.3]{Kat} we get that $\BI\sqsubseteq\I$. 

(iv)$\implies$(v): Let $f:\omega\to\omega^2$ be such that $f^{-1}[\{(i,j)\}]\in\I$ for all $(i,j)\in\omega^2$, i.e., $f|A$ being constant implies $A\in\I$ (if there is an $A\notin\I$ with $f|A$ constant, then we are done as $f[A]\in\fin(\omega^2)\subseteq\fin^2$). Consider the partition of $\omega$ given by $f^{-1}[\{(i,j)\}]$ for all $(i,j)\in\omega^2$. By (iv), there is an $A\notin\I$ such that there is a $k\in\omega$ with $A\cap f^{-1}[\{i\}\times\omega]\in\fin$ for all $i>k$ and $A\cap f^{-1}[\{(i,j)\}]\in\fin$ for all $i\leq k$ and $j\in\omega$. Thus, $f|A$ is finite-to-one (as $A\cap f^{-1}[\{(i,j)\}]\in\fin$ for all $i,j$) and $f[A]\in\fin^2$ (as for each $i>k$ there are only finitely many $j$ such that $A\cap f^{-1}[\{(i,j)\}]\neq\emptyset$).

(v)$\implies$(i): Assume to the contrary that $\BI\leq_K \I$ and let $g:\omega\to\omega^3$ be the witnessing function. Define $f:\omega\to\omega^2$ by $g(n)\in\{f(n)\}\times\omega$. Then there is an $A\notin\I$ such that $f[A]\in\fin^2$ and $f|A$ is either constant or finite-to-one. However, $f|A$ being constant implies that $A\subseteq g^{-1}[\{(i,j)\}\times\omega]\in\I$ for some $(i,j)\in\omega$. Thus, $f|A$ is finite-to-one. But then $A\cap g^{-1}[\{(i,j)\}\times\omega]$ is finite for all $(i,j)\in\omega$ and, as $f[A]\in\fin^2$, there is a $k\in\omega$ such that $A\cap g^{-1}[\{i\}\times\omega^2]=A\cap f^{-1}[\{i\}\times\omega]\in\fin$ for all $i>k$. As $g$ witnesses $\BI\leq_K \I$, we conclude that $A\in\I$, which contradicts the choice of $A$ and finishes the proof.
\end{proof}

\begin{remark}
In the terminology of \cite{Kat}, equivalence of items (i) and (iii) from Proposition \ref{q} means that $\BI$ has the property Kat.
\end{remark}

\begin{proposition}
\label{conv}
We have $\conv\sqsubseteq\BI\sqsubseteq\fin^2$ and none of those inclusions can be reversed.
\end{proposition}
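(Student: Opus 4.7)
The statement decomposes into four sub-claims: the two inclusions $\conv\sqsubseteq\BI$, $\BI\sqsubseteq\fin^2$ and the two non-reversibilities $\BI\not\sqsubseteq\conv$, $\fin^2\not\sqsubseteq\BI$. I plan to handle them in that order, reducing $\sqsubseteq$-questions to Kat\v{e}tov reductions via Propositions~\ref{propertyKat} and~\ref{q} whenever this simplifies the construction.

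The inclusion $\BI\sqsubseteq\fin^2$ follows from a pairing bijection $h\colon\omega^2\to\omega^3$ defined by $h(\pi(i,j),n)=(i,j,n)$ for some bijection $\pi\colon\omega^2\to\omega$: the combinatorial form of $\BI$ makes $\{(i,j):A_{(i,j)}\text{ is infinite}\}$ finite for every $A\in\BI$ (finitely many $i<k$, each contributing finitely many $j$'s by the $\fin^2$-clause), so $h^{-1}[A]$ has only finitely many infinite columns in $\omega^2$ and thus lies in $\fin^2$. For $\conv\sqsubseteq\BI$, by Proposition~\ref{propertyKat} I only need a function $g\colon\omega^3\to\Q\cap[0,1]$ witnessing $\conv\leq_K\BI$; bijection is unnecessary. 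I would fix $p^*\in[0,1]$, a discrete sequence $(p_i)_i$ converging to $p^*$ with $p^*$ as its only accumulation, and for each $i$ a discrete sequence $(p_{i,j})_j$ converging to $p_i$ with $p_i$ as its only accumulation, where the whole family $\{p^*,p_i,p_{i,j}\}$ has pairwise positive gaps outside these prescribed convergences. Then set $g(i,j,n)$ to be any rational within $2^{-n}$ of $p_{i,j}$. The accumulation points of $g[\omega^3]$ in $[0,1]$ are exactly $\{p_{i,j}\}\cup\{p_i\}\cup\{p^*\}$, so for $S\in\conv$ every $x\in S'$ must lie in this closure; case analysis on which form $x$ takes confines $g^{-1}[S]$ respectively to a single $\{(i,j)\}\times\omega$, a single $\{i\}\times\omega\times\omega$ with $\fin^2$-slice, or slicewise-finite content across all $i$'s, so $g^{-1}[S]\in\BI$.

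For $\fin^2\not\sqsubseteq\BI$, Proposition~\ref{o}(c) reduces the task to showing $\BI$ is a weak P-ideal. Given a partition $(X_n)$ of $\omega^3$ into $\BI$-sets with bounds $(k_n)$: if $\sup_nk_n=K<\infty$, then $S=\{K\}\times\omega^2$ lies outside $\BI$ and meets each $X_n$ in the finite set $(X_n)_{(K)}$; if $(k_n)$ is unbounded, a priority construction picks $S_{(i)}\subseteq\omega^2\setminus\bigcup_{n\leq i}(X_n)_{(i)}$ with $S_{(i)}\notin\fin^2$ at each stage (possible because a finite union of $\fin^2$-sets is in $\fin^2$ while $\omega^2$ itself is not), and $S=\bigcup_i\{i\}\times S_{(i)}$ meets each $X_n$ only within $i<n$ in a controlled way. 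For $\BI\not\sqsubseteq\conv$ I verify clause (iv) of Proposition~\ref{q} for $\I=\conv$: given a partition $(X_{(i,j)})_{(i,j)\in\omega^2}$ of $\Q\cap[0,1]$ into $\conv$-sets and rows $R_i=\bigcup_jX_{(i,j)}$, each $X_{(i,j)}$ has finite derived set in $[0,1]$, hence no finite union $R_{i_1}\cup\cdots\cup R_{i_K}$ can be dense in any open subset of $[0,1]$ (its derived set would be uncountable). Thus for any dense sequence $(t_m)\subseteq[0,1]$ and any finite $F\subseteq\omega$, infinitely many $i\notin F$ satisfy $R_i\cap\{q:|q-t_m|<2^{-m}\}\neq\emptyset$, so one inductively chooses pairwise distinct $i_m$ and $a_m\in R_{i_m}$ with $|a_m-t_m|<2^{-m}$. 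The set $A=\{a_m:m\in\omega\}$ meets each $R_i$ (and hence each $X_{(i,j)}$) at most once but accumulates on the dense set $\{t_m\}$, so $A'$ is uncountable and $A\notin\conv$, as required.

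The step I expect to require the most care is the case analysis in $\conv\sqsubseteq\BI$. One must verify that the accumulations of $g[\omega^3]$ in $[0,1]$ really coincide with the prescribed tree $\{p_{i,j}\}\cup\{p_i\}\cup\{p^*\}$; this demands enough geometric slack between distinct $p_{i,j}$'s so that no unintended accumulation (from a diagonal sequence $(p_{i_k,j_k})$ with both $i_k$ and $j_k$ varying) occurs, which in turn is what forces every $S'$-point to fit exactly one of the three prescribed cases and makes the preimage fall into the appropriate region of the $\BI$-template.
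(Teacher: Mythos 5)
Your decomposition into the four sub-claims is right, and your argument for $\BI\sqsubseteq\fin^2$ is correct (the paper uses a different but equally simple bijection, mapping each column $\{n\}\times\omega$ onto the plane $\{n\}\times\omega^2$). For $\conv\sqsubseteq\BI$ the paper takes a different and slicker route, quoting the Meza characterization that $\conv\leq_K\I$ holds iff there is a countable family $\{X_n\}$ splitting every $\I$-positive set, and verifying it for $\{\{(i,j)\}\times\omega\}\cup\{\{i\}\times\omega^2\}$; your explicit tree construction can be made to work but, as written, the tolerance $2^{-n}$ is not enough: for $n=0$ the value $g(i,j,0)$ may be anywhere in $[0,1]$, so for instance all the $g(i,j,0)$ could coincide with a single rational $r$, and then $g^{-1}[\{r\}]\supseteq\omega^2\times\{0\}\notin\BI$ although $\{r\}\in\conv$. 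You need to replace $2^{-n}$ by something like $\min(2^{-n},\varepsilon_{i,j})$ with the balls $B(p_{i,j},\varepsilon_{i,j})$ pairwise disjoint and respecting the tree; you flag this as the delicate step, but be aware it is more than a sanity check — without it the map $g$ genuinely fails.

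There are two actual gaps. First, in the argument for $\BI\not\sqsubseteq\conv$: the claim that no finite union $R_{i_1}\cup\cdots\cup R_{i_K}$ can be dense ``because its derived set would be uncountable'' is false. Each $R_i=\bigcup_j X_{(i,j)}$ is an \emph{infinite} union of $\conv$-sets, and such a union can have uncountable derived set — indeed a single row $R_0$ can equal all of $\Q\cap[0,1]$ (take $X_{(0,j)}$ a bijective enumeration into singletons and $X_{(i,j)}=\emptyset$ for $i>0$; the paper explicitly allows empty cells in partitions). Then only $i=0$ ever meets $B(t_m,2^{-m})$, your pairwise-distinct choice of $i_m$ is impossible, and the construction collapses. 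The conclusion of Proposition~\ref{q}(iv) still holds in such cases, but your method does not reach it; the paper instead first builds a dense $B\notin\conv$ with $f|B$ finite-to-one and then invokes $\fin^2\not\sqsubseteq\conv|B$ to thin $B$ to an $A$ with $f[A]\in\fin^2$, an approach which is insensitive to how the partition is concentrated among the rows.

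Second, in the unbounded case of $\fin^2\not\sqsubseteq\BI$: you pick $S_{(i)}\subseteq\omega^2\setminus\bigcup_{n\leq i}(X_n)_{(i)}$ with $S_{(i)}\notin\fin^2$, and conclude that $S$ ``meets each $X_n$ only within $i<n$ in a controlled way.'' But nothing controls $S_{(i)}\cap(X_n)_{(i)}$ for $n>i$: this intersection can be infinite, so $S\cap X_n$ can be infinite and $S$ fails to witness the weak P-property. The fix is not to aim for $S_{(i)}\notin\fin^2$ (which is not even always achievable with the needed constraints, as $\fin^2$ is not a weak P-ideal) but only for $S_{(i)}$ infinite: at stage $i$ avoid $(X_n)_{(i)}$ for $n\leq i$ and take at most one point from each $(X_n)_{(i)}$ with $n>i$ — there are infinitely many nonempty such cells because each $(X_n)_{(i)}\in\fin^2$ and finitely many $\fin^2$-sets cannot cover $\omega^2$. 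Then $S_{(i)}$ is infinite for all $i$, so $S\notin\BI$, and $|S\cap X_n|\leq n$. The paper avoids this bookkeeping altogether: given a bijection $f:\omega^3\to\omega^2$ with all $f^{-1}[\{i\}\times\omega]\in\BI$, it diagonally selects one point per column of $\omega^2$ in each prescribed row of $\omega^3$, producing $X\notin\BI$ with $f[X]\in\emptyset\otimes\fin\subseteq\fin^2$. That direct bijection argument is shorter and sidesteps the case split on $\sup_n k_n$.
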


\begin{proof}
It is easy to check that $\BI\sqsubseteq\fin^2$ is witnessed by any bijection $f:\omega^2\to\omega^3$ satisfying $f[\{n\}\times\omega]=\{n\}\times\omega^2$. 

By Proposition \ref{propertyKat}, in order to show $\conv\sqsubseteq\BI$, it suffices to prove that $\conv\leq_K\BI$, which in turn is equivalent to existence of a countable family $\{X_n:\ n\in\omega\}$ such that for each $A\notin\BI$ there is an $n\in\omega$ with $|A\cap X_n|=|A\setminus X_n|=\omega$ (by \cite[Theorem 2.4.3]{Meza}). We claim that $\{\{(i,j)\}\times\omega:\ i,j\in\omega\}\cup\{\{i\}\times\omega^2:\ i\in\omega\}$ is the required family. Indeed, fix $A\notin\BI$ and assume first that $A\cap \{(i,j)\}\times\omega$ is infinite for some $i,j\in\omega$. Then $A\setminus \{(i,j)\}\times\omega$ has to be infinite as well since $\{(i,j)\}\times\omega\in\BI$. Thus, $\{(i,j)\}\times\omega$ is the required set. Assume now that $A\cap \{(i,j)\}\times\omega$ is finite for all $i,j\in\omega$. Then $A\cap \{i\}\times\omega^2$ is infinite for infinitely many $i\in\omega$ (as $A\notin\BI$). Thus, we can find $i\in\omega$ such that $|A\cap (\{i\}\times\omega^2)|=|A\setminus (\{i\}\times\omega^2)|=\omega$.

Now we show that $\BI\not\sqsubseteq\conv$, i.e., that $\conv$ is unboring. We will use item (v) of Proposition \ref{q}. Fix $f:\mathbb{Q}\cap[0,1]\to\omega^2$. If $f^{-1}[\{(i,j)\}]\notin\I$ for some $(i,j)\in\omega^2$ then put $A=f^{-1}[\{(i,j)\}]$ and observe that $f[A]\in\fin^2$, $A\notin\I$ and $f|A$ is constant. On the other hand, if $f^{-1}[\{(i,j)\}]\in\I$ for each $(i,j)\in\omega^2$, then fix any bijection $h:\omega\to\omega^2$, enumerate all basic open sets in $[0,1]$ as $\{U_n:\ n\in\omega\}$ and pick $q_n\in U_n\setminus\bigcup_{i<n}f^{-1}[\{h(i)\}]$ for all $n\in\omega$. This can be done as $U_n\cap\mathbb{Q}\notin\conv$, for each $n\in\omega$. Then $B=\{q_n:\ n\in\omega\}\notin\conv$ and $f|B$ is finite-to-one. 

Observe that $\fin^2\not\sqsubseteq\conv|B$. Indeed, given any bijection $g:B\to\omega^2$ such that $g^{-1}[\{n\}\times\omega]\in\conv|B$, for all $n\in\omega$, one can inductively pick $r_n\in (U_n\cap B)\setminus(\bigcup_{i<n}g^{-1}[\{i\}\times\omega])$ (as $U_n\cap B\notin\conv|B$). Then $D=\{r_n:\ n\in\omega\}\subseteq B$, $D\notin\conv|B$ and $g[D]\in\emptyset\otimes\fin\subseteq\fin^2$.

By Proposition \ref{propertyKat}, $\fin^2\not\leq_{KB}\conv|B$, so there is a $C\in\fin^2$ such that $E=B\cap f^{-1}[C]\notin\conv|B$. Then $f[E]\in\fin^2$ and $f|E$ is finite-to-one (as $E\subseteq B$). Hence, $\conv$ is unboring.

Finally, we show that $\fin^2\not\sqsubseteq\BI$. Let $f:\omega^3\to\omega^2$ be any bijection. Without loss of generality we can assume that $f^{-1}[\{i\}\times\omega]\in\BI$ for all $i\in\omega$ (otherwise we are done). Fix any $g:\omega\to\omega$ such that $g^{-1}[\{n\}]$ is infinite for all $n\in\omega$. Pick inductively points $x_n\in\omega^3$ in such a way that
$$x_n\in(\{g(n)\}\times\omega^2)\setminus\bigcup\{f^{-1}[\{i\}\times\omega]:\ (\exists j<n)\ f(x_j)\in\{i\}\times\omega\}$$
(this is possible as each set of the form $\{g(n)\}\times\omega^2$ does not belong to $\BI$). Then $X=\{x_n:\ n\in\omega\}\notin\BI$ (as $X_{(n)}$ is infinite for all $n\in\omega$), but $f[X]\in\emptyset\otimes\fin\subseteq\fin^2$. 
\end{proof}

\begin{corollary}
Let $\I$ be a maximal ideal. Then $\I^\star$ is a P-point if and only if $\I$ is unboring.
\end{corollary}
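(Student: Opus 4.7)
The plan is to observe that this corollary follows almost immediately from Propositions \ref{P-point} and \ref{conv}, together with the transitivity of the relation $\sqsubseteq$. Since $\I$ is maximal, $\cU := \I^\star$ is an ultrafilter, so Proposition \ref{P-point} gives us the equivalences $\cU\text{ is a P-point}\iff\conv\not\sqsubseteq\I\iff\fin^2\not\sqsubseteq\I$. Hence we only need to sandwich $\BI$ between $\conv$ and $\fin^2$ in the $\sqsubseteq$-order, which is exactly what Proposition \ref{conv} supplies: $\conv\sqsubseteq\BI\sqsubseteq\fin^2$.

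First I would prove the forward direction by contraposition. Assume $\I$ is boring, that is, $\BI\sqsubseteq\I$. Since $\conv\sqsubseteq\BI$ by Proposition \ref{conv} and $\sqsubseteq$ is transitive (composing the two witnessing bijections yields a bijection witnessing $\conv\sqsubseteq\I$), we get $\conv\sqsubseteq\I$. By Proposition \ref{P-point}, this means $\cU$ is not a P-point.

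Conversely, assume $\cU$ is not a P-point. By Proposition \ref{P-point} we have $\fin^2\sqsubseteq\I$. Combining with $\BI\sqsubseteq\fin^2$ from Proposition \ref{conv} and using transitivity of $\sqsubseteq$ again, we obtain $\BI\sqsubseteq\I$, i.e., $\I$ is boring.

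There is essentially no obstacle: the entire argument is a short chase through Propositions \ref{P-point} and \ref{conv}. The only minor point worth verifying explicitly in the write-up is that $\sqsubseteq$ is transitive, which follows from the definition (the composition of two bijections $g\circ f$ satisfies $(g\circ f)^{-1}[A]=f^{-1}[g^{-1}[A]]$, so if $g$ witnesses $\J\sqsubseteq\K$ and $f$ witnesses $\I\sqsubseteq\J$, then $g\circ f$ witnesses $\I\sqsubseteq\K$).
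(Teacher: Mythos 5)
Your proof is correct and is essentially the paper's argument (the paper simply writes ``Follows from Propositions \ref{P-point} and \ref{conv}''), with the details spelled out. One tiny notational slip: with $f:\bigcup\J\to\bigcup\I$ witnessing $\I\sqsubseteq\J$ and $g:\bigcup\K\to\bigcup\J$ witnessing $\J\sqsubseteq\K$, the composition witnessing $\I\sqsubseteq\K$ is $f\circ g:\bigcup\K\to\bigcup\I$, not $g\circ f$; the underlying idea is of course right.
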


\begin{proof}
Follows from Propositions \ref{P-point} and \ref{conv}.
\end{proof}

\subsection{Hereditary weak P-ideals}

The following definition will play a very important role in our considerations.

\begin{definition}
An ideal $\I$ is a hereditary weak P-ideal if $\fin^2\not\sqsubseteq\I|A$ for each $A\notin\I$.
\end{definition}

An ideal $\I$ is a P$^+$-ideal if for every decreasing sequence $(A_n)$ with $A_n\notin\I$, for all $n\in\omega$, there is an $A\notin \I$ such that $A\setminus A_n$ is finite for each $n\in \omega$. It is known that each $\bf{\Sigma^0_2}$ ideal is a P$^+$-ideal (cf. \cite[Proposition 5.1]{I-selection}).

We say that an ideal $\I$ on $X$ is weakly selective if for each partition $(A_n)$ of $X$ such that $A_0\notin\I^\star$ and $A_{n+1}\in\I$, for all $n\in\omega$, there is an $S\notin\I$ with $|S\cap A_n|\leq 1$, for all $n\in\omega$. It is easy to see that $\I$ is weakly selective if and only if $\mathcal{ED}\not\leq_K\I|A$ for all $A\notin\I$ \cite[Section 2.7]{Meza}, where 
$$\ED=\left\{A\subseteq\omega^2:\ \left(\exists k,m\in\omega\right)\ \left(\forall i>k\right)\ |A_{(i)}|<m\right\}.$$

\begin{proposition}
\label{all-cor}
Suppose that $\I$ satisfies at least one of the following conditions: 
\begin{itemize}
\item[(a)] $\I$ is $\bf{\Pi^0_4}$;
\item[(b)] $\I$ is a P$^+$-ideal.
\item[(c)] $\I$ is weakly selective.
\end{itemize}
Then $\I$ is a hereditary weak P-ideal.
\end{proposition}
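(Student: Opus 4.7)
My plan reduces to showing that each of the three conditions implies $\I$ itself is a weak P-ideal, since by Proposition \ref{o}(c) this is equivalent to $\fin^2\not\sqsubseteq\I$. Each hypothesis is inherited by $\I|A$ when $A\notin\I$: indeed $\I|A=\I\cap\mathcal{P}(A)$ is $\bf{\Pi^0_4}$ whenever $\I$ is (intersection of $\bf{\Pi^0_4}$ with a closed set); any decreasing $\I|A$-positive sequence is in particular $\I$-positive, and the witness $S$ of P$^+(\I)$ satisfies $S\setminus A_0$ finite with $A_0\subseteq A$, so $S\cap A$ witnesses P$^+(\I|A)$; and hFinBW is defined hereditarily. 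Hence verifying the weak P-ideal conclusion for $\I$ itself, under each hypothesis applied to $\I|A$, suffices.

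Case (c) is immediate: if $\fin^2\sqsubseteq\I$, then by Proposition \ref{conv} we have $\conv\sqsubseteq\fin^2$, and composing the two witnessing bijections gives $\conv\sqsubseteq\I$, contradicting Proposition \ref{o}(b). Case (b) is a one-line diagonalization: given a partition $(B_n)$ of $\bigcup\I$ into $\I$-sets, put $A_n=\bigcup\I\setminus\bigcup_{k\le n}B_k$, which is a decreasing sequence of $\I$-positive sets (each $\bigcup_{k\le n}B_k\in\I$ while $\bigcup\I\notin\I$). The P$^+$ property produces $S\notin\I$ with $S\setminus A_n$ finite for all $n$; since $B_n\cap A_n=\emptyset$, $S\cap B_n\subseteq S\setminus A_n$ is finite, and $S$ witnesses weak P.

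Case (a) is the main obstacle. My starting point would be classical Talagrand theory: every Borel ideal has the Baire property and is therefore meager, furnishing an interval partition $(J_k)$ of $\omega$ such that every $A\in\I$ is disjoint from infinitely many $J_k$. Given a partition $(B_n)$ of $\omega$ into $\I$-sets, one can greedily choose $k_0<k_1<\cdots$ with $J_{k_n}\cap(B_0\cup\cdots\cup B_{n-1})=\emptyset$ and then pick $x_n\in J_{k_n}$; the set $S=\{x_n:n\in\omega\}$ automatically satisfies $S\cap B_n$ finite for each $n$.

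The genuinely hard step, and where the $\bf{\Pi^0_4}$ assumption must do real work beyond bare Borel-ness, is verifying $S\notin\I$: mere meagerness does not rule out $S\in\I$, since $S$ only meets the chosen subsequence of intervals. I expect the actual argument to stratify or iterate Talagrand's characterization along the $G_{\delta\sigma\delta}$-representation $\I=\bigcap_n\bigcup_m F_{n,m}$, perhaps extracting at each level $n$ a refinement of the interval partition that respects the $\bf{\Sigma^0_3}$-piece $\bigcup_m F_{n,m}$, and then building $S$ to meet cofinitely many intervals of a sufficiently fine common refinement. Alternatively, a game-theoretic or fusion-type construction exploiting the layered representation should also work. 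The positivity of $S$ is the bottleneck; the disjointness with the $B_n$'s comes essentially for free from the greedy selection above.
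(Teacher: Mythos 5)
Your cases (b) and (c) are correct and actually more self-contained than the paper's: the paper simply cites Meza's thesis for (b), whereas you give the elementary diagonalization $A_n=\bigcup\I\setminus\bigcup_{k\le n}B_k$ directly, and your argument for (c) is essentially the same as the paper's (compose $\conv\sqsubseteq\fin^2$ with $\fin^2\sqsubseteq\I|A$ to contradict hFinBW). Your overall reduction scheme --- prove ``weak P-ideal'' under each hypothesis and separately check that each hypothesis passes to $\I|A$ --- is valid, and your hereditariness verifications for (a), (b), (c) are all sound.

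The genuine gap is case (a), and you have correctly diagnosed it but not filled it. You need the statement: \emph{a $\bf{\Pi^0_4}$ ideal cannot contain an isomorphic copy of $\fin^2$}. This is a theorem of Debs and Saint Raymond (the paper cites Theorems 7.5 and 9.1 of \cite{Debs}), and it is not something that falls out of Talagrand's meagerness characterization. Your greedy selection via a Talagrand interval partition $(J_k)$ indeed produces $S$ meeting each $B_n$ finitely, but --- exactly as you say --- nothing forces $S\notin\I$: the construction works verbatim for $\fin^2$ itself, which is Borel and meager, yet any $S$ so constructed lies in $\fin^2$, so the positivity claim must genuinely use the $\bf{\Pi^0_4}$ bound. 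Your speculation about ``stratifying Talagrand along the $G_{\delta\sigma\delta}$ representation'' is not a proof, and I do not think that particular route leads anywhere cheap: the Debs--Saint Raymond result is obtained via a quite different machinery (a fine analysis of Baire-class pointwise limits and a rank argument), not by refining interval partitions. As written, your proposal proves (b) and (c) but leaves (a) as an open bottleneck, which is precisely the hardest of the three; the paper's proof of (a) is one sentence plus a citation of a deep theorem, and no elementary substitute for that citation has been supplied.
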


\begin{proof}
(a): Observe that $\I|B$ is $\bf{\Pi^0_4}$, for all $B\notin\I$ (as the identity function from $\mathcal{P}(B)$ to $\mathcal{P}(\bigcup\I)$ is continuous). By \cite[Theorems 7.5 and 9.1]{Debs}, a $\bf{\Pi^0_4}$ ideal cannot contain an isomorphic copy of $\fin^2$.

(b): Follows from \cite[Theorem 2.4.2]{Meza}.

(c): Since $\I$ is weakly selective, $\ED\not\leq_{K}\I|A$ for each $A\notin\I$. It follows that $\fin^2\not\sqsubseteq\I|A$ for each $A\notin\I$ (otherwise we would get $\ED\sqsubseteq\fin^2\sqsubseteq\I|A$).
\end{proof}

\begin{proposition}
\label{boringprop}
The following hold for any ideal $\I$:
\begin{enumerate}
\item[(a)] $\I$ is a hereditary weak P-ideal $\implies$ $\I$ is unboring $\implies$ $\I$ is a weak P-ideal (equivalently, $\fin^2\sqsubseteq\I\implies\BI\sqsubseteq\I\implies(\exists A\notin\I)\ \fin^2\sqsubseteq\I|A$).
\item[(b)] Both implications from item (a) cannot be reversed.
\item[(c)] If $\I$ is extendable to an unboring ideal then $\I$ is unboring. In particular, if $\I$ is extendable to hereditary weak P-ideal then $\I$ is unboring.
\end{enumerate}
\end{proposition}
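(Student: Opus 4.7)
I would dispatch (c) and the second implication of (a) first, as both reduce to manipulating the same witness. For (c), assume that $\I$ and $\J$ live on a common countable set and $\I\subseteq\J$; if $f\colon\bigcup\I\to\omega^3$ witnesses $\BI\sqsubseteq\I$, then the same $f$ automatically satisfies $f^{-1}[B]\in\I\subseteq\J$ for every $B\in\BI$ and hence witnesses $\BI\sqsubseteq\J$. Contrapositively, $\J$ unboring forces $\I$ unboring, and the ``in particular'' clause then follows by combining with the forward direction of (a). For the second implication of (a) (unboring $\Rightarrow$ weak P), I would take the contrapositive $\fin^2\sqsubseteq\I\Rightarrow\BI\sqsubseteq\I$, which is immediate from $\BI\sqsubseteq\fin^2$ (Proposition~\ref{conv}) and the transitivity of $\sqsubseteq$.

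The only nontrivial step is the first implication of (a), which I would prove by contrapositive. Assume $\BI\sqsubseteq\I$ via a bijection $f\colon\bigcup\I\to\omega^3$ and set $A_i:=f^{-1}[\{i\}\times\omega^2]$. If some $A_{i_0}\notin\I$, the key observation is that for every $B\in\fin^2$ the set $\{i_0\}\times B$ belongs to $\BI$ (its $i_0$-th column is $B\in\fin^2$ and every other column is empty); hence $f^{-1}[\{i_0\}\times B]\in\I|A_{i_0}$, and the restriction $f|A_{i_0}$, viewed as a bijection onto $\omega^2$, witnesses $\fin^2\sqsubseteq\I|A_{i_0}$, so $\I$ is not hereditary weak P. In the remaining case every $A_i\in\I$; then any $A\notin\I$ meeting each $A_i$ finitely would make all columns of $f[A]$ finite, placing $f[A]\in\BI$ (take $k=0$ in the defining formula) and forcing $A=f^{-1}[f[A]]\in\I$, a contradiction. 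Hence no such $A$ exists and $\I$ fails to be even a weak P-ideal (witnessed by the partition $(A_i)$), which a fortiori rules out being hereditary weak P. The main subtlety is this Case~2: one must notice that ``all columns finite'' suffices to place a subset of $\omega^3$ into $\BI$.

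For (b) I would supply concrete examples. The second non-reversibility is witnessed by $\I:=\BI$ itself, which is trivially boring via the identity yet a weak P-ideal because $\fin^2\not\sqsubseteq\BI$ by Proposition~\ref{conv}. For the first non-reversibility, I propose $\I:=\fin\oplus\fin^2$; the restriction $(\fin\oplus\fin^2)|(\{1\}\times\omega^2)$ is isomorphic to $\fin^2$, so $\I$ is not hereditary weak P. To see that $\I$ is unboring, suppose toward contradiction that $f\colon(\{0\}\times\omega)\cup(\{1\}\times\omega^2)\to\omega^3$ witnesses $\BI\sqsubseteq\I$. Then for each $B\in\BI$ the piece $f^{-1}[B]\cap(\{0\}\times\omega)$ is finite, so $Z:=f[\{0\}\times\omega]$ is an infinite subset of $\omega^3$ meeting every element of $\BI$ in a finite set. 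But $\BI$ is tall (Remark~\ref{rem-Borel}), so $Z$ contains some infinite $C\in\BI$, yielding $|Z\cap C|=\omega$---a contradiction. Tallness of $\BI$ is the only non-routine ingredient here.
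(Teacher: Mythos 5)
Your proof is correct, and parts of it take a genuinely different route from the paper's. For the first implication of (a) the paper argues directly, invoking the characterization of unboring ideals from Proposition~\ref{q}(v) (given $f\colon\omega\to\omega^2$, find $A\notin\I$ with $f|A$ constant or finite-to-one and $f[A]\in\fin^2$). You instead prove the contrapositive with a bijection $f\colon\bigcup\I\to\omega^3$ and a two-case analysis on whether some $A_i=f^{-1}[\{i\}\times\omega^2]$ is $\I$-positive; this reproduces, almost verbatim, the argument the paper uses for the separate Proposition~\ref{hh} (the characterization of hereditary weak P as hereditary unboring), applied at $B=\bigcup\I$. Both work; the paper's version reuses machinery ($\ref{q}$(v)) that recurs throughout, while yours is more self-contained. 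For (b), the second non-reversibility via $\BI$ itself coincides with the paper's; for the first you use $\I=\fin\oplus\fin^2$ together with tallness of $\BI$, whereas the paper uses $\conv$ and exhibits an explicit copy of $\fin^2$ inside $\conv|A$. Your tallness argument is sound (it is exactly the observation that any boring ideal restricted to a ``$\fin$-part'' forces an $\I$-positive set to be hit infinitely by some member of $\BI$), though it relies on the unproved remark that $\BI$ is tall; the paper sidesteps this by a hands-on construction, and also notes in Section~7 that $\fin\oplus\fin^2$ is extendable to a $\bf{\Sigma^0_2}$ ideal, which would give its unboringness directly from your own item (c). Item (c) and the contrapositive for the second implication of (a) are the same as the paper's.
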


\begin{proof}
(a): If $\fin^2\sqsubseteq\I$ then $\mathcal{BI}\sqsubseteq\I$ by Proposition \ref{conv}. This shows that $\I$ being unboring implies that $\I$ is a weak P-ideal.

To show the first implication, let $\I$ be a hereditary weak P-ideal. We need to show that $\I$ is unboring.

We will use item (v) of Proposition \ref{q}. Fix $f:\omega\to\omega^2$. If $f^{-1}[\{(i,j)\}]\notin\I$ for some $(i,j)\in\omega^2$ then put $A=f^{-1}[\{(i,j)\}]$ and observe that $f[A]\in\fin^2$, $A\notin\I$ and $f|A$ is constant. On the other hand, if $f^{-1}[\{(i,j)\}]\in\I$ for each $(i,j)\in\omega^2$, then $\{f^{-1}[\{(i,j)\}]:\ (i,j)\in\omega^2\}$ defines a partition of $\omega$ into sets from $\I$. As $\fin^2\not\sqsubseteq\I$, there is a $B\notin\I$ such that $f|B$ is finite-to-one. Moreover, since $\fin^2\not\leq_{KB}\I|B$ (by Proposition \ref{propertyKat}), there is a $C\in\fin^2$ such that $A=B\cap f^{-1}[C]\notin\I|B$. Then $A\notin\I$, $f[A]\in\fin^2$ and $f|A$ is finite-to-one.

(b): To show that $\fin^2\sqsubseteq\I\implies\BI\sqsubseteq\I$ cannot be reversed, it suffices to consider the ideal $\BI$, which is boring, but $\fin^2\not\sqsubseteq\BI$ (by Proposition \ref{conv}).

Now we show that the other implication cannot be reversed. The ideal $\conv$ is a good example. By Proposition \ref{conv}, $\conv$ is unboring. On the other hand, for each $n\in\omega$ pick a sequence $(x_{n,k})_k\subseteq\mathbb{Q}\cap(\frac{1}{n+2},\frac{1}{n+1})$ converging to $\frac{1}{n+2}$. Then $\fin^2\sqsubseteq\conv|A$, where $A=\{x_{n,k}:\ n,k\in\omega\}$ (as witnessed by the function $f:A\to\omega^2$ given by $f(x_{n,k})=(n,k)$).

(c): Let $\J$ be an unboring ideal containing $\I$. Then $\I$ cannot be boring as otherwise we would get $\BI\sqsubseteq\I\subseteq\J$, which contradicts the fact that $\J$ is unboring. The second part follows from item (a).
\end{proof}

By the above result, if $\I$ is extendable to a hereditary weak P-ideal, then $\I$ is unboring. In the next subsection we show that this implication cannot be reversed even for Borel ideals.

\begin{proposition}
\label{hh}
$\I$ is a hereditary weak P-ideal if and only if $\I$ is hereditary unboring (that is, $\BI\sqsubseteq\I|B$ for no $B\notin\I$).
\end{proposition}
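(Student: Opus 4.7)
The plan is to prove both implications by combining Proposition \ref{boringprop}(a) with the transitivity of $\sqsubseteq$ and the relation $\BI\sqsubseteq\fin^2$ from Proposition \ref{conv}. The key observation driving the proof is that ``hereditary weak P-ideal'' is itself a hereditary property, so Proposition \ref{boringprop}(a) can be applied pointwise to each restriction $\I|B$.

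For the $(\Leftarrow)$ direction I would argue contrapositively. Suppose $\I$ is not a hereditary weak P-ideal, so there exists $A\notin\I$ with $\fin^2\sqsubseteq\I|A$. Proposition \ref{conv} provides $\BI\sqsubseteq\fin^2$, and composing the two witnessing bijections (transitivity of $\sqsubseteq$) gives $\BI\sqsubseteq\I|A$. Setting $B=A$ produces a $B\notin\I$ with $\BI\sqsubseteq\I|B$, contradicting hereditary unboringness.

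For the $(\Rightarrow)$ direction I would fix an arbitrary $B\notin\I$ and verify that the restriction $\I|B$ is itself a hereditary weak P-ideal. Any $(\I|B)$-positive set $A$ satisfies $A\subseteq B$ and $A\notin\I$, and the identity $(\I|B)|A=\I|A$ follows at once from the definition of restriction. Hence $\fin^2\not\sqsubseteq(\I|B)|A=\I|A$ by the hypothesis that $\I$ is a hereditary weak P-ideal, so $\I|B$ is indeed a hereditary weak P-ideal. Applying Proposition \ref{boringprop}(a) to $\I|B$ yields $\BI\not\sqsubseteq\I|B$, and since $B\notin\I$ was arbitrary, $\I$ is hereditary unboring.

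The argument poses no real obstacle; the only step requiring care is the bookkeeping verification that $(\I|B)|A=\I|A$ whenever $A\subseteq B$, which is immediate. Everything else is a direct invocation of previously established results.
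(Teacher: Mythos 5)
Your proof is correct, and the $(\Leftarrow)$ direction is identical to the paper's: compose $\BI\sqsubseteq\fin^2$ (Proposition \ref{conv}) with the hypothesized $\fin^2\sqsubseteq\I|A$. For $(\Rightarrow)$ you take a genuinely different and cleaner route: you note that $(\I|B)|A = \I|A$ for $A\subseteq B$, so ``hereditary weak P-ideal'' is itself a hereditary property, hence every restriction $\I|B$ with $B\notin\I$ is again a hereditary weak P-ideal, and Proposition \ref{boringprop}(a) then gives $\BI\not\sqsubseteq\I|B$. The paper instead argues directly from a hypothetical witness $f:B\to\omega^3$ of $\BI\sqsubseteq\I|B$, splitting into cases according to whether $f^{-1}[\{i\}\times\omega^2]\in\I$ for all $i$ or not, and in each case exhibits a partition of an $\I$-positive subset of $B$ into $\I$-sets with no $\I$-positive almost-transversal, which forces $\fin^2\sqsubseteq\I|B$ or $\fin^2\sqsubseteq\I|C$ for a positive $C\subseteq B$. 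Your version is shorter and more modular; the paper's is more self-contained, re-deriving the needed implication from the combinatorial structure of $\BI$ rather than routing through Proposition \ref{boringprop}(a), whose proof in turn invokes Propositions \ref{q} and \ref{propertyKat}.
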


\begin{proof}
If $\I$ is a hereditary unboring ideal then the existence of $B\notin\I$ with $\fin^2\sqsubseteq\I|B$ would imply $\BI\sqsubseteq\I|B$ (by Proposition \ref{conv}). Thus, $\I$ is a hereditary weak P-ideal.

Conversely, assume that $\I$ is a hereditary weak P-ideal and fix $B\notin\I$. If $\BI\sqsubseteq\I|B$ and $f:B\to\omega^3$ is the witnessing bijection, then either $f^{-1}[\{i\}\times\omega^2]\in\I$ for all $i\in\omega$ or $f^{-1}[\{n\}\times\omega^2]\notin\I$ for some $n\in\omega$. In the former case, $(f^{-1}[\{i\}\times\omega^2])_{i\in\omega}$ is a partition of $B$ into sets belonging to $\I$ such that each $X\subseteq B$ with $X\cap f^{-1}[\{i\}\times\omega^2]\in\fin(B)$ for all $i\in\omega$ is in $\I$, i.e., $\fin^2\sqsubseteq\I|B$. In the latter case, $(f^{-1}[\{(n,i)\}\times\omega^2])_{i\in\omega}$ is a partition of $C=f^{-1}[\{n\}\times\omega^2]\notin\I$ into sets belonging to $\I$ such that each $X\subseteq C$ with $X\cap f^{-1}[\{(n,i)\}\times\omega^2]\in\fin(C)$ for all $i\in\omega$ is in $\I$, i.e., $\fin^2\sqsubseteq\I|C$. Thus, both cases contradict the assumption that $\I$ is a hereditary weak P-ideal.
\end{proof}

\begin{corollary}
\label{K-uniform}
The following are equivalent for any $\leq_{K}$-uniform ideal $\I$:
\begin{itemize}
\item[(a)] $\I$ is a weak P-ideal;
\item[(b)] $\I$ is unboring;
\item[(c)] $\I$ is a hereditary weak P-ideal;
\item[(d)] $\I$ is hereditary unboring (that is, $\BI\sqsubseteq\I|B$ for no $B\notin\I$).
\end{itemize}
\end{corollary}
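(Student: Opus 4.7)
The plan is to exploit the material already proved: Proposition~\ref{boringprop}(a) gives the implications (c)$\Rightarrow$(b)$\Rightarrow$(a) for \emph{every} ideal (no $\leq_K$-uniformity needed), and Proposition~\ref{hh} gives (c)$\Leftrightarrow$(d) unconditionally. So the whole content of the corollary is the single implication (a)$\Rightarrow$(c), where $\leq_K$-uniformity must be used.

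For that implication I would argue by contrapositive. Assume $\I$ fails to be a hereditary weak P-ideal, so there exists $A\notin\I$ with $\fin^2\sqsubseteq\I|A$. I want to deduce $\fin^2\sqsubseteq\I$, which by Proposition~\ref{o}(c) exactly says $\I$ is not a weak P-ideal. The chain is: $\fin^2\sqsubseteq\I|A$ implies $\fin^2\leq_K\I|A$ by definition; then $\leq_K$-uniformity provides $\I|A\leq_K\I$; composition of the two Kat\v etov reductions yields $\fin^2\leq_K\I$; and finally Proposition~\ref{propertyKat} upgrades this to $\fin^2\sqsubseteq\I$, which is the desired failure of the weak P-ideal property.

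There is essentially no obstacle here beyond bookkeeping: the whole point is that $\fin^2$ has the ``property Kat'' (Proposition~\ref{propertyKat}), so $\leq_K$-reductions into $\I$ can be upgraded to $\sqsubseteq$-embeddings, and this is precisely what lets $\leq_K$-uniformity transport a witness of $\fin^2\sqsubseteq\I|A$ to a witness of $\fin^2\sqsubseteq\I$. The one place to be careful is to cite Proposition~\ref{propertyKat} for $\fin^2$ rather than for $\BI$ (the analogous upgrade for $\BI$ is what Proposition~\ref{q} provides, and it is not needed here because we are working with (a) rather than (b) on the weak P-ideal side).

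Putting it together: I would state the proof as ``(c)$\Leftrightarrow$(d) is Proposition~\ref{hh}; (c)$\Rightarrow$(b)$\Rightarrow$(a) is Proposition~\ref{boringprop}(a); the remaining implication (a)$\Rightarrow$(c) is proved by contrapositive via the two-line composition argument above using $\leq_K$-uniformity and Proposition~\ref{propertyKat}.''
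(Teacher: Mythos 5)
Your proof is correct and matches the paper's own argument: you decompose the corollary exactly as the paper does — (c)$\Leftrightarrow$(d) via Proposition~\ref{hh}, (c)$\Rightarrow$(b)$\Rightarrow$(a) via Proposition~\ref{boringprop}(a), and (a)$\Rightarrow$(c) by contrapositive using $\leq_K$-uniformity to compose $\fin^2\leq_K\I|A\leq_K\I$ and Proposition~\ref{propertyKat} to upgrade to $\fin^2\sqsubseteq\I$. You merely spell out the composition step a bit more explicitly than the paper's compressed one-liner.
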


\begin{proof}
(c)$\Longleftrightarrow$(d): This is Proposition \ref{hh}.

(c)$\implies$(b) and (b)$\implies$(a): This is item (a) of Proposition \ref{boringprop}.

(a)$\implies$(c): Suppose to the contrary that $\fin^2\sqsubseteq\I|A$ for some $A\notin\I$. Since $\I$ is $\leq_{K}$-uniform, we have $\fin^2\sqsubseteq\I|A\leq_K \I$ which is equivalent to $\I$ not being a weak P-ideal (by Propositions \ref{o} and \ref{propertyKat}). Thus, $\fin^2\not\sqsubseteq\I|A$ for all $A\notin\I$.
\end{proof}

We end this subsection with an observation about extendability to hereditary weak P-ideals.

\begin{proposition}
\label{smallest}
If $\I$ is extendable to a hereditary weak P-ideal then there is a smallest (in the sense of inclusion) hereditary weak P-ideal $\widehat{\I}$ containing $\I$.
\end{proposition}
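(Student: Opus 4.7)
The plan is to define $\widehat{\I}$ as the intersection of the family $\mathcal{F}$ of all hereditary weak P-ideals (on $\bigcup\I$) that contain $\I$. By hypothesis $\mathcal{F}$ is nonempty, so $\widehat{\I}=\bigcap\mathcal{F}$ is well defined. First I would verify the routine facts: an arbitrary intersection of ideals on the same set is an ideal (closure under subsets and finite unions is preserved, $\fin(\bigcup\I)$ is contained in every member of $\mathcal{F}$, and $\bigcup\I\notin\widehat{\I}$ because $\bigcup\I\notin\J$ for any $\J\in\mathcal{F}$). Clearly $\I\subseteq\widehat{\I}$, and by construction $\widehat{\I}$ is contained in every hereditary weak P-ideal extending $\I$, so it will be the smallest such ideal once we know it actually is a hereditary weak P-ideal.

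The crux is therefore showing that $\widehat{\I}$ is itself a hereditary weak P-ideal. Fix any $A\notin\widehat{\I}$; I must prove $\fin^2\not\sqsubseteq\widehat{\I}|A$. By definition of $\widehat{\I}$, there exists some $\J\in\mathcal{F}$ with $A\notin\J$, so $\J|A$ is an ideal on $A$. Suppose, toward a contradiction, that $\fin^2\sqsubseteq\widehat{\I}|A$ via a bijection $f\colon A\to\omega^2$, i.e.\ $f^{-1}[X]\in\widehat{\I}|A$ for every $X\in\fin^2$. Since $\widehat{\I}\subseteq\J$, we have $\widehat{\I}|A\subseteq\J|A$, so the very same bijection $f$ satisfies $f^{-1}[X]\in\J|A$ for every $X\in\fin^2$, witnessing $\fin^2\sqsubseteq\J|A$. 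This contradicts the assumption that $\J$ is a hereditary weak P-ideal (since $A\notin\J$). Hence no such $f$ exists, which means $\fin^2\not\sqsubseteq\widehat{\I}|A$ and $\widehat{\I}$ is a hereditary weak P-ideal, as required.

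I do not expect any real obstacle here: the only slightly delicate point is to note that the restriction operator is monotone, i.e.\ $\widehat{\I}\subseteq\J$ implies $\widehat{\I}|A\subseteq\J|A$, so that a witness for $\fin^2\sqsubseteq\widehat{\I}|A$ automatically witnesses $\fin^2\sqsubseteq\J|A$. Everything else reduces to the standard observation that an intersection of ideals is an ideal. Note that taking the intersection is the only reasonable candidate, since any smallest hereditary weak P-ideal extending $\I$ must be contained in every member of $\mathcal{F}$ and must itself belong to $\mathcal{F}$.
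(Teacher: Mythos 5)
Your proof is correct and follows essentially the same approach as the paper: define $\widehat{\I}$ as the intersection of all hereditary weak P-ideals containing $\I$, note that it is an ideal containing $\I$, and then for $A\notin\widehat{\I}$ find a witness $\J$ in the family with $A\notin\J$ and use monotonicity of $\sqsubseteq$ under $\widehat{\I}|A\subseteq\J|A$ to get the contradiction. The only difference is that you spell out the routine verifications slightly more fully; the substance is identical.
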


\begin{proof}
Define $\widehat{\I}=\bigcap\{\J:\ \J\text{ is a hereditary weak P-ideal containing }\I\}$. Then $\I\subseteq\widehat{\I}$ and $\widehat{\I}$ is an ideal as an intersection of ideals. We need to show that $\widehat{\I}$ is a hereditary weak P-ideal. Let $B\notin\widehat{\I}$. Then there is a hereditary weak P-ideal $\J$ containing $\I$ with $B\notin\J$. Thus, $\fin^2\sqsubseteq\widehat{\I}|B$ would imply $\fin^2\sqsubseteq\widehat{\I}|B\subseteq\J|B$ which contradicts the fact that $\J$ is a hereditary weak P-ideal. 
\end{proof}

\subsection{A technical notion}

Following \cite{Laflamme} we say that an ideal $\I$ on $X$ is $\omega$-diagonalizable by $\mathcal{I}^\star$-universal sets, if one can find a family $\{\cZ_k:\ k\in\omega\}$ such that:
\begin{itemize} 
\item for each $k\in\omega$ the set $\cZ_k\subseteq [X]^{<\omega}\setminus \{\emptyset\}$ is $\mathcal{I}^\star$-universal, which means that for each $A\in \I$ there is a $Z\in \cZ_k$ with $Z\cap A=\emptyset$;
\item for each $A\in \I$ there is a $k\in\omega$ such that $Z\not\subseteq A$ for every $Z\in \cZ_k$. 
\end{itemize}

The following notion will be useful in considerations about replacing CH with MA($\sigma$-centered) in some of our results.

\begin{definition}
We say that an ideal $\I$ is strongly unboring if for each $f\in\cD_\I$ there is a $C\notin\I$ such that $f|C$ is finite-to-one and $\I|C$ is $\omega$-diagonalizable by $(\mathcal{I}|C)^\star$-universal sets.
\end{definition}

\begin{lemma}[{Essentially \cite{Reclaw} and \cite{Laflamme}}]
\label{Laflamme}
If $\I$ is a coanalytic ideal and $\fin^2\not\sqsubseteq\I$ then $\I$ is $\omega$-diagonalizable by $\mathcal{I}^\star$-universal sets.
\end{lemma}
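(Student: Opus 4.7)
The plan is to obtain the statement as a direct consequence of Laflamme's theorem on analytic filters, after passing from ideals to their duals. Since complementation $A \mapsto X \setminus A$ is a homeomorphism of $2^X$, the filter $\I^\star$ is analytic whenever $\I$ is coanalytic. By Proposition \ref{o}(c), the hypothesis $\fin^2 \not\sqsubseteq \I$ is equivalent to $\I$ being a weak P-ideal, which is precisely the combinatorial property used as a hypothesis in both \cite{Reclaw} and \cite{Laflamme}.

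The main input is Laflamme's theorem from \cite{Laflamme}, asserting that an analytic filter $\cF$ is $\omega$-diagonalizable by $\cF$-universal sets if and only if the dual ideal is a weak P-ideal. Translating under the correspondence $Z \cap A = \emptyset \Longleftrightarrow Z \subseteq X\setminus A$, Laflamme's notion of $\cF$-universality becomes the first bullet in the definition given in the text (for every $A \in \I$ some $Z \in \cZ_k$ satisfies $Z \cap A = \emptyset$), and his diagonalization clause becomes the second (for every $A \in \I$ some $\cZ_k$ contains no $Z \subseteq A$). Applying Laflamme's theorem to $\cF = \I^\star$ therefore produces the required family $\{\cZ_k : k \in \omega\}$.

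For a more self-contained route one can instead run a direct fusion construction in the spirit of \cite{Reclaw}: fix a Souslin scheme exhibiting $\I$ as a coanalytic set and, at each stage $k$, use the weak P-property to enlarge the already constructed finite pieces of $\cZ_k$ so as to maintain $\I^\star$-universality while simultaneously taking care of a growing list of escape requirements indexed by finite approximations to potential $\I$-sets. The main obstacle is the second bullet of the definition: given an arbitrary $A \in \I$, one must produce a \emph{single} stage $k$ at which every $Z \in \cZ_k$ fails to lie inside $A$. This is where both ingredients are essential: the weak P-property keeps each $\cZ_k$ broad enough that the escape candidates are not swallowed by any $\I$-set, and the coanalytic complexity allows a Baire-category / rank argument on the Souslin scheme that matches escape requirements to concrete levels $k$. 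This step is the combinatorial heart of Reclaw's argument and of Laflamme's dual-side theorem, and the rest of the proof consists of bookkeeping.
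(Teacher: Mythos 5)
Your first route breaks on a complexity error: you assert that $\I^\star$ is analytic when $\I$ is coanalytic, but the complementation map $A\mapsto X\setminus A$ is a homeomorphism of $2^X$, so it preserves descriptive complexity exactly; the paper itself notes that $\I$ and $\I^\star$ always sit in the \emph{same} pointclass. Thus for a coanalytic $\I$ the dual filter $\I^\star$ is coanalytic, not analytic, and any theorem of Laflamme that is stated for analytic filters is not directly applicable. You also state Laflamme's result in a form (``an analytic filter is $\omega$-diagonalizable by $\cF$-universal sets iff the dual ideal is a weak P-ideal'') that silently presupposes the dichotomy, whereas what Laflamme actually proves is a pair of game-theoretic equivalences: Player~I has a winning strategy in $G(\I)$ iff $\fin^2\sqsubseteq\I$, and Player~II has one iff $\I$ is $\omega$-diagonalizable by $\I^\star$-universal sets. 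To pass from ``$\fin^2\not\sqsubseteq\I$'' to ``Player~II wins'' you must know that $G(\I)$ is \emph{determined}. That determinacy is precisely what fails to be automatic outside the Borel/analytic realm and is the only reason the coanalyticity hypothesis is there; the paper invokes a separate result from \cite{equal} (building on an idea of \cite{Sabok}) establishing determinacy of $G(\I)$ for coanalytic $\I$. Your argument nowhere supplies this step, and the complexity slip hides exactly the place where it is needed.

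The alternative ``direct fusion'' sketch does not constitute a proof. It appeals to ``a Baire-category / rank argument on the Souslin scheme'' and to unspecified ``escape requirements'' without saying how the weak P-property and the coanalytic rank interact to produce, for an arbitrary $A\in\I$, a \emph{single} level $k$ at which no $Z\in\cZ_k$ lies inside $A$; this is exactly the hard part, and gesturing at \cite{Reclaw} (which treats only the Borel case) does not close it. The clean route is the one the paper takes: introduce Laflamme's game $G(\I)$, cite \cite[Theorem~5.1]{equal} for its determinacy when $\I$ is coanalytic, then read off the conclusion from Laflamme's two-sided characterization of winning strategies.
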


\begin{proof}
Consider the game $G\left(\I\right)$, defined by Laflamme (see \cite{Laflamme}) as follows: Player I in his $n$'th move plays an element $C_{n}\in\I$, and then Player II responds with any $F_{n}\in \left[\bigcup\I\right]^{<\omega}$ such that $F_{n}\cap C_{n}=\emptyset$. Player I wins if $\bigcup_{n\in \omega} F_{n}\in \I$. Otherwise, Player II wins. By \cite[Theorem 5.1]{equal}, $G\left(\I\right)$ is determined as $\I$ is a coanalytic ideal. By \cite[Theorem 2.16]{Laflamme}, Player I has a winning strategy in $G\left(\I\right)$ if and only if $\fin^2\sqsubseteq\I$. Thus, Player II has to have a winning strategy. Again by \cite[Theorem 2.16]{Laflamme}, this is in turn equivalent to $\I$ being $\omega$-diagonalizable by $\mathcal{I}^\star$-universal sets.
\end{proof}

The above was first observed by M. Laczkovich and I. Rec\l{}aw in \cite{Reclaw}. However, their observation concerned Borel ideals. In \cite{equal} it was shown, using an idea from \cite{Sabok}, that this is also true for coanalytic ideals. See also \cite{3kinds} for some applications of this result.

\begin{proposition}
\label{r}
The following hold for any ideal $\I$:
\begin{enumerate}
\item[(a)] If $\I$ is extendable to a strongly unboring ideal then $\I$ is strongly unboring.
\item[(b)] $\I$ is extendable to a coanalytic hereditary weak P-ideal $\implies$ $\I$ is strongly unboring $\implies$ $\I$ is unboring.
\item[(c)] There is a Borel strongly unboring (so also unboring, by the previous item) ideal not extendable to a hereditary weak P-ideal. In particular, the first implication from item (b) cannot be reversed even for Borel ideals.
\item[(d)] If $\cU$ is a P-point then $\cU^\star$ is an unboring ideal which is not strongly unboring. In particular, under MA($\sigma$-centered) there are unboring ideals which are not strongly unboring.
\end{enumerate}
\end{proposition}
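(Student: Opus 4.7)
The plan for (a) is direct. Since $\I\subseteq\J$, any $f\in\cD_\I$ also belongs to $\cD_\J$, and strong unboringness of $\J$ supplies $C\notin\J$ with $f|C$ finite-to-one and a family $\{\cZ_k\}$ witnessing $\omega$-diagonalizability of $\J|C$ by $(\J|C)^\star$-universal sets. Since $C\notin\J\supseteq\I$ we also have $C\notin\I$, and because $\I|C\subseteq\J|C$ the same family $\{\cZ_k\}$ diagonalizes $\I|C$: both clauses in the definition are monotone under replacing $\J|C$ by a smaller ideal. For the first implication of (b), by (a) it suffices to verify that any coanalytic hereditary weak P-ideal $\J$ is strongly unboring. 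Given $f\in\cD_\J$, the weak P-ideal property (a consequence of hereditary weak P) produces $C\notin\J$ with $f|C$ finite-to-one; then $\J|C$ is coanalytic and $\fin^2\not\sqsubseteq\J|C$, so Lemma \ref{Laflamme} supplies the required $\omega$-diagonalization.

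The second implication of (b) is handled via Proposition \ref{q}(v). Fix $f:\bigcup\I\to\omega^2$. If some fiber $f^{-1}[\{(i,j)\}]$ is $\I$-positive, it serves as the desired $A$ (with $f|A$ constant and $f[A]$ finite). Otherwise composing $f$ with a bijection $\omega^2\to\omega$ produces $g\in\cD_\I$; strong unboringness then yields $C\notin\I$ with $g|C$ (equivalently $f|C$) finite-to-one and $\I|C$ $\omega$-diagonalizable. Because at most one player has a winning strategy in the Laflamme game, $\omega$-diagonalizability of $\I|C$ forces $\fin^2\not\sqsubseteq\I|C$ regardless of the descriptive complexity of $\I|C$, and Proposition \ref{propertyKat} upgrades this to $\fin^2\not\leq_{KB}\I|C$. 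The finite-to-one map $f|C$ therefore fails to witness $\fin^2\leq_{KB}\I|C$, supplying $B\in\fin^2$ with $A:=(f|C)^{-1}[B]\notin\I$; this $A$ is as required.

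Item (c) is the technical heart of the proposition. The plan is to construct an explicit Borel ideal $\I$ by encoding an internal obstruction to any hereditary weak P-ideal extension while retaining sufficient diagonalization structure for strong unboringness. The target is to arrange $\I$ so that there is a family of $\I$-positive sets $(B_\alpha)$ with $\fin^2\sqsubseteq\I|B_\alpha$ for every $\alpha$, which forces any hereditary weak P-ideal extension $\J\supseteq\I$ to contain each $B_\alpha$, and to choose the $B_\alpha$'s so that no proper ideal can contain them all. Strong unboringness is then verified by exhibiting, for every $f\in\cD_\I$, a restriction $C\notin\I$ disjoint from the obstruction pattern on which $\I|C$ is a coanalytic weak P-ideal, whereupon Lemma \ref{Laflamme} applies directly. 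Balancing these two competing requirements in a single Borel ideal is the main obstacle; a plausible starting point is a twist of the ideal $\conv$ with a product structure carrying the obstruction sets in pairwise disjoint copies, arranged so that Borel complexity and combinatorial rigidity coexist.

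For (d), unboringness is immediate: $\cU$ a P-point gives $\conv\not\leq_K\cU^\star$ by Proposition \ref{P-point}, and $\conv\sqsubseteq\BI$ from Proposition \ref{conv} then forces $\BI\not\leq_K\cU^\star$. The main task is to show $\cU^\star$ is not strongly unboring. Consider the identity $f:\omega\to\omega$: its fibers are singletons, which lie in $\cU^\star$ by freeness, so $f\in\cD_{\cU^\star}$; for any $C\in\cU$ (i.e., $C\notin\cU^\star$) with $f|C$ finite-to-one, one must show $\cU^\star|C$ is not $\omega$-diagonalizable. Suppose $\{\cZ_k\}$ were such a witness. The first universality clause forces, for any choice function $\phi:\cZ_k\to C$ with $\phi(Z)\in Z$, that $\phi[\cZ_k]\in\cU|C$, because otherwise $C\setminus\phi[\cZ_k]\in\cU|C$ would contain some $Z\in\cZ_k$, contradicting $\phi(Z)\in Z\cap\phi[\cZ_k]$. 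A sunflower/$\Delta$-system extraction then produces an infinite pairwise disjoint sub-family of $\cZ_k$: the common kernel must be empty (otherwise a constant selector into the kernel would yield a finite image, incompatible with freeness), and after thinning we may assume each petal has at least two elements. Two selectors differing on this sub-family then yield two disjoint sets both in $\cU|C$, contradicting the filter property. The final sentence follows since P-points exist under MA($\sigma$-centered).
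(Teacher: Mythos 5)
Your arguments for (a) and the first half of (b) match the paper. Your proof of the second implication of (b) takes a different, arguably cleaner route than the paper's: you verify condition (v) of Proposition \ref{q} directly, using that $\omega$-diagonalizability of $\I|C$ rules out Player I winning Laflamme's game (hence $\fin^2\not\sqsubseteq\I|C$, hence $\fin^2\not\leq_{KB}\I|C$ by Proposition \ref{propertyKat}), so the finite-to-one restriction $f|C$ already supplies the required $\I$-positive set. The paper instead argues contrapositively, assuming $\BI\sqsubseteq\I$ and exhibiting an explicit $f\in\cD_\I$ whose restrictions force a copy of $\fin^2$ inside $\I|B$, then invoking the non-$\omega$-diagonalizability of $\fin^2$ itself. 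Both arguments are correct; yours avoids the concrete witness and the homogeneity of $\fin^2$.

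For (c) you give a plan but no construction, so this item is not proved. Your strategy (plant $\I$-positive sets $B_\alpha$ with $\fin^2\sqsubseteq\I|B_\alpha$ so that any hereditary weak P-extension must swallow them and thereby become boring) is the right shape, and it is essentially what the paper does with the ideal $\I=(\BI\otimes\emptyset)\cap(\fin(\omega^3)\otimes\fin)$ on $\omega^4$: the sets $\{(i,j)\}\times\omega^2$ each carry a copy of $\fin^2$ relative to $\I$, forcing any hereditary weak P-extension $\I'$ to contain them all, and together with the $\BI$-pattern already present in $\I$ this makes $\I'$ boring, a contradiction. The verification that this specific $\I$ is strongly unboring (via a transversal $X$ hitting each column $\{(i,j,k)\}\times\omega$ exactly once, with $\I|X\cong\BI$ Borel and $\fin^2\not\sqsubseteq\BI$, then Lemma \ref{Laflamme}) is the technical part you flagged and is genuinely needed; without it this item is a gap.

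In (d) the unboringness direction is fine. The claim that no $C\notin\cU^\star$ admits an $\omega$-diagonalizing family is where your combinatorial argument has a hole. You correctly show that every selector $\phi$ on $\cZ_k$ has $\phi[\cZ_k]\in\cU|C$, and you correctly extract an infinite pairwise disjoint subfamily $\cS\subseteq\cZ_k$ with petals of size at least two. But the conclusion ``two selectors differing on $\cS$ yield disjoint sets in $\cU|C$'' is false as stated: the two images share $\phi[\cZ_k\setminus\cS]$, which you have not controlled and which may well be infinite. To make the argument close you would need either a pairwise disjoint \emph{cofinite} subfamily of $\cZ_k$ (which is not available), or a way to absorb $\cZ_k\setminus\cS$ into a $\cU^\star$-small set before comparing the two images. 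The paper sidesteps this entirely by a descriptive argument: by Laczkovich--Recław, $\omega$-diagonalizability of $\cU^\star|C$ would produce a $\bSigma^0_2$ set separating $\cU^\star|C$ from its dual filter, but since $\cU^\star|C$ is a maximal ideal (by homogeneity of maximal ideals) the separating set must equal $\cU^\star|C$, contradicting that maximal ideals are never Borel. You should either repair the selector argument by controlling the leftover part of $\cZ_k$, or adopt the separation argument.
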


\begin{proof}
(a): This is easy to verify using the definition of strongly unboring ideals.

(b): At first we will prove the first implication. Let $\J$ be a coanalytic hereditary weak P-ideal containing $\I$. We will show that $\J$ is strongly unboring. By item (a), this will finish the proof.

Fix $f\in\cD_\J$. Since $(f^{-1}[\{n\}])$ defines a partition of $\bigcup\J$ into sets belonging to $\J$ and $\fin^2\not\sqsubseteq\J$, there is a $C\notin\J$ such that $f|C$ is finite-to-one. Note that $\J|C$ is a coanalytic ideal (as $\J$ is coanalytic and the identity function from $\mathcal{P}(C)$ to $\cP(\bigcup\J)$ is continuous). Since $\J$ is a hereditary weak P-ideal, $\fin^2\not\sqsubseteq\J|C$. Thus, by Lemma \ref{Laflamme}, $\J|C$ is $\omega$-diagonalizable by $(\mathcal{J}|C)^\star$-universal sets. Hence, $C$ is the required set.

Now we show the second implication. Suppose that $\I$ is boring, i.e., $\BI\sqsubseteq\I$. Without loss of generality we may assume that $\bigcup\I=\omega^3$ and $\BI\subseteq\I$ (by considering an appropriate isomorphic copy of $\I$). Fix any bijection $h:\omega\to\omega^2$ and define $f:\bigcup\I\to\omega$ by $f[\{h(n)\}\times\omega]=\{n\}$. Then $f\in\cD_{\BI}\subseteq\cD_\I$. Let $B\notin\I$ be such that $f|B$ is finite-to-one. Denote by $C=\{(i,j)\in\omega^2:\ B\cap(\{(i,j)\}\times\omega)\neq\emptyset\}$. Then $C\notin\fin^2$ (as otherwise we would have $B\in\BI\subseteq\I$). As $\fin^2$ is homogeneous (see Example \ref{uniform-ex}), $\fin^2|C\cong\fin^2$. Let $g:C\to\omega^2$ be the witnessing isomorphism and denote by $\pi_{1,2}:\omega^3\to\omega^2$ the projection onto the first two coordinates (i.e., $\pi_{1,2}(x,y,z)=(x,y)$ for all $(x,y,z)\in\omega^3$). Then $g\circ \pi_{1,2}|B$ witnesses that $\fin^2\leq_{KB}\BI|B\subseteq\I|B$. By Proposition \ref{propertyKat}, this means that $\fin^2\sqsubseteq\I|B$. Using the witnessing bijection it is easy to check that $\I|B$ being $\omega$-diagonalizable by $(\mathcal{I}|B)^\star$-universal sets would imply that $\fin^2$ is $\omega$-diagonalizable by $(\fin^2)^\star$-universal sets. However, this is impossible by \cite[Theorem 6.2]{3kinds} (see also \cite[Theorem 7.5]{Debs} or \cite[Theorem 5]{Reclaw}). Hence, $\I|B$ cannot be $\omega$-diagonalizable by $(\mathcal{I}|B)^\star$-universal sets. Therefore, each boring ideal is not strongly unboring.

(c): Consider the ideal $\I$ on $\omega^4$ generated by:
\begin{itemize}
\item sets $\{(i,j,k)\}\times\omega$, for all $(i,j,k)\in\omega^3$;
\item sets $B\subseteq\omega^4$ such that $B\cap(\{(i,j,k)\}\times\omega)$ is finite for all $(i,j,k)\in\omega^3$ and
$$\left\{(i,j,k)\in\omega^3:\ B\cap(\{(i,j,k)\}\times\omega)\neq\emptyset\right\}\in\BI.$$
\end{itemize}
Equivalently, $\I=(\BI\otimes\emptyset)\cap\fin(\omega^3)\otimes\fin$. It is easy to see that $\I$ is Borel.

First we show that $\I$ is not extendable to a hereditary weak P-ideal. Suppose otherwise, i.e., that there is a hereditary weak P-ideal $\I'\supseteq\I$. If $\{(i,j)\}\times \omega^2\notin\I'$ for some $(i,j)\in\omega^2$, then $\I'_{i,j}=\{A\subseteq\omega^4:\ A\cap(\{(i,j)\}\times\omega^2)\in\I'\}$ is an ideal on $\omega^4$. It is easy to see that $\I'$ being a hereditary weak P-ideal implies that $\I'_{i,j}$ is a hereditary weak P-ideal as well. However, $\I'_{i,j}\supseteq\{A\subseteq\omega^4:\ A\cap(\{(i,j)\}\times\omega^2)\in\I\}\cong\fin^2$ and $\fin^2$ is not extendable to a hereditary weak P-ideal (by Propositions \ref{conv} and \ref{boringprop}(c)). Therefore $\{(i,j)\}\times \omega^2\in\I'$ for all $(i,j)\in\omega^2$. However, this implies that $\I'$ is boring (as witnessed by any bijection $f:\omega^4\to\omega^3$ with $f[\{(i,j)\}\times\omega^2]=\{(i,j)\}\times\omega$). Recall that a hereditary weak P-ideal cannot be boring (by Proposition \ref{boringprop}(a)). Thus, we get a contradiction which proves that $\I$ is not extendable to a hereditary weak P-ideal.

Now we show that $\I$ is strongly unboring. Fix $f\in\cD_\I$ and a bijection $h:\omega\to\omega^3$. Inductively pick points $x_n\in\omega^4$ such that:
\begin{itemize}
\item $x_n\in\{h(n)\}\times\omega$;
\item if $f[\{h(n)\}\times\omega]\setminus f[\{x_i:\ i<n\}]$ is nonempty then $f(x_n)$ belongs to that set;
\item if $f[\{h(n)\}\times\omega]\subseteq f[\{x_i:\ i<n\}]$ then $f(x_n)=\max f[\{x_i:\ i<n\}]$.
\end{itemize}
Then $X=\{x_n:\ n\in\omega\}\notin\I$ since $|X\cap(\{(i,j,k)\}\times\omega)|=1$ for all $(i,j,k)\in\omega^3$. Moreover, we have $\fin^2\not\sqsubseteq\I|X$ (by Proposition \ref{conv}, as $\BI$ and $\I|X$ are isomorphic). Since $\I$ is Borel, so is $\I|X$ (as the identity function from $X$ to $\omega^3$ is continuous). Thus, by Lemma \ref{Laflamme}, $\I|X$ is $\omega$-diagonalizable by $(\mathcal{I}|X)^\star$-universal sets. To finish the proof we only need to show that $f|X$ is finite-to-one. 

Fix $n\in\omega$. By the construction of $X$, $X\cap f^{-1}[\{n\}]$ can be infinite only if there were infinitely many $m\in\omega$ with $f[\{h(m)\}\times\omega]\subseteq \{0,1,\ldots,n\}$, i.e., $(\{h(m)\}\times\omega)\subseteq f^{-1}[\{0,1,\ldots,n\}]$. However, $f^{-1}[\{0,1,\ldots,n\}]\in\I\subseteq\fin(\omega^3)\otimes\fin$. Consequently, there are only finitely many $m\in\omega$ covered by $f^{-1}[\{0,1,\ldots,n\}]$. Thus, $f|X$ is finite-to-one. 

(d): Let $\mathcal{U}$ be a P-point. Denote $\I=\mathcal{U}^\star$. By Proposition \ref{P-point}, $\fin^2\not\sqsubseteq\I$. Since $\I$ is a maximal ideal, it is homogeneous (see Example \ref{uniform-ex}). Thus, $\I$ is a hereditary weak P-ideal. This means that it is unboring (by Proposition \ref{boringprop}(a)). On the other hand, we will show that for each $C\notin\I$ the ideal $\I|C$ is not $\omega$-diagonalizable by $(\mathcal{I}|C)^\star$-universal sets (hence $\I$ cannot be strongly unboring). Fix any $C\notin\I$. By the proof of \cite[Theorem 4]{Reclaw}, $\I|C$ being $\omega$-diagonalizable by $(\mathcal{I}|C)^\star$-universal sets means that there is a $\bf{\Sigma^0_2}$ set $S$ with $\I|C\subseteq S$ and $S\cap(\I|C)^\star=\emptyset$. Since $\I$ is homogeneous, $\I|C\cong\I$. In particular, $\I|C$ is a maximal ideal on $C$, so $\I|C\cup(\I|C)^\star=\cP(C)$and $S=\I|C$. However, $\I|C$ is not Borel (as a maximal ideal). This contradiction finishes the proof.
\end{proof}

\section{Mr\'{o}wka spaces}

\subsection{Results in ZFC}

If $\mathcal{A}\subseteq[\omega]^\omega$ is an AD family then by $\fin^2(\mathcal{A})$ we denote the ideal on $\omega$ generated by sets belonging to $\mathcal{A}$ and by sets having finite intersection with each member of $\mathcal{A}$. 

\begin{proposition}
\label{MrowkaChar}
The following are equivalent for any ideal $\I$ and any AD family $\mathcal{A}$:
\begin{itemize}
\item[(a)] $\Phi(\mathcal{A})$ is in FinBW($\I$);
\item[(b)] for every function $f\in\mathcal{D}_\I$ there is a $B\notin\I$ such that $f|B$ is finite-to-one and $f[B]\in\fin^2(\mathcal{A})$.
\end{itemize}
\end{proposition}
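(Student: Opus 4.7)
The plan is to exploit the fact that the induced topology on $\omega\subseteq\Phi(\mathcal{A})$ is governed by $\fin^2(\mathcal{A})$. Explicitly, an $\omega$-valued sequence $(x_n)_{n\in B}$ converges in $\Phi(\mathcal{A})$ to some $A\in\mathcal{A}$ iff it is finite-to-one and $\{n\in B:x_n\notin A\}$ is finite, and it converges to $\infty$ iff it is finite-to-one and its range meets each member of $\mathcal{A}$ finitely; in both cases the range lies in $\fin^2(\mathcal{A})$. Conversely, given any $X\in\fin^2(\mathcal{A})$, write $X\subseteq A_1\cup\dots\cup A_m\cup D$ with $A_i\in\mathcal{A}$ and $D\cap A$ finite for every $A\in\mathcal{A}$; this decomposition lets us split any finite-to-one $\omega$-valued sequence with range in $X$ into finitely many pieces each of which converges to some $A_i$ or to $\infty$.

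For (a)$\Rightarrow$(b) I would view $f\in\cD_\I$ as an $\omega$-valued sequence in $\Phi(\mathcal{A})$, extract via FinBW($\I$) a set $B\notin\I$ on which $f$ converges, note that the limit must be in $\mathcal{A}\cup\{\infty\}$ because $f^{-1}[\{k\}]\in\I$ for every $k$, and then read off (b) from the description above.

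For (b)$\Rightarrow$(a) I would take any sequence $g:\bigcup\I\to\Phi(\mathcal{A})$ and assume that $g^{-1}[\{x\}]\in\I$ for all $x\in\Phi(\mathcal{A})$ (otherwise a constant subsequence works). I would then manufacture $f\in\cD_\I$ encoding $g$: put $f=g$ on $g^{-1}[\omega]$; enumerate $g^{-1}[\mathcal{A}]=\{y_i\}$ and pick $f(y_i)\in g(y_i)\subseteq\omega$ so that $i\mapsto f(y_i)$ is injective (possible since each $g(y_i)$ is infinite); enumerate $g^{-1}[\{\infty\}]=\{w_j\}$ and set $f(w_j)=j$. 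Each $f^{-1}[\{k\}]$ is contained in $g^{-1}[\{k\}]$ together with at most two extra points, hence $f\in\cD_\I$. Applying (b) yields $B\notin\I$ with $f|B$ finite-to-one and $f[B]\subseteq A_1\cup\dots\cup A_m\cup D\in\fin^2(\mathcal{A})$. Since $g^{-1}[\{\infty\}]\in\I$, either $B':=B\cap g^{-1}[\omega]$ or $B'':=B\cap g^{-1}[\mathcal{A}]$ is $\I$-positive. In the former case $g|B'=f|B'$ falls under the description above, and partitioning $B'$ according to the cover of $g[B']$ produces an $\I$-positive piece on which $g$ converges either to some $A_i$ or to $\infty$. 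In the latter case, the pieces $B''\cap g^{-1}[\{A_j\}]$ lie in $\I$ so the remainder $B''_*$ is $\I$-positive; for $n\in B''_*$ the value $f(n)$ lies in $g(n)\cap(A_1\cup\dots\cup A_m\cup D)$, which is a finite set by almost disjointness of $\mathcal{A}$ together with the defining property of $D$, and injectivity of $f$ on $g^{-1}[\mathcal{A}]$ then forces $g|B''_*$ to be finite-to-one into $\mathcal{A}\setminus\{A_1,\dots,A_m\}$, whence $g|B''_*$ converges to $\infty$ in $\Phi(\mathcal{A})$.

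The main technical difficulty lies in the second case of the reverse direction: the hypothesis (b) only constrains $f$, whose dependence on $g|g^{-1}[\mathcal{A}]$ is through the trace $f(y_i)\in g(y_i)$, and extracting convergence of $g$ requires channelling the $\fin^2(\mathcal{A})$-smallness of $f[B]$ back through almost disjointness to control the fibres of $g|B''_*$.
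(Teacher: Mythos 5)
Your proof is correct, and for the harder direction (b)$\implies$(a) it takes a genuinely different route from the paper. The paper first splits into cases on whether $\fin^2\sqsubseteq\I|f^{-1}[X\setminus\omega]$: if not, it extracts directly a finite-to-one $\I$-positive set landing in $X\setminus\omega$, which converges to $\infty$; if so, it lets the $\fin^2$-embedding $h$ dictate the values $\tilde h(n)=\pi_1(h(n))$ on $f^{-1}[X\setminus\omega]$, applies (b), and then shows that the $X\setminus\omega$-part of the resulting set $B$ automatically lies in $\I$ so that one can throw it away and work only with $C=B\cap f^{-1}[\omega]$. You instead build a single $f\in\cD_\I$ encoding the whole sequence (copying $g$ on $g^{-1}[\omega]$, selecting $f(y_i)\in g(y_i)$ injectively on $g^{-1}[\mathcal{A}]$, enumerating $g^{-1}[\{\infty\}]$), and analyze both $B'=B\cap g^{-1}[\omega]$ and $B''=B\cap g^{-1}[\mathcal{A}]$. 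The pivotal step in your $B''$ case --- for $n\in B''_*$ the value $f(n)$ lies in the finite set $g(n)\cap(A_1\cup\dots\cup A_m\cup D)$, so injectivity of $f$ on $g^{-1}[\mathcal{A}]$ forces $g|B''_*$ to be finite-to-one --- is exactly the extra idea the paper avoids by collapsing $f^{-1}[X\setminus\omega]$ up front. Your approach buys a uniform construction with no dichotomy, at the cost of this almost-disjointness-plus-injectivity bookkeeping; the paper's dichotomy dodges the bookkeeping but requires invoking the $\fin^2$-embedding machinery. The (a)$\implies$(b) direction is essentially the paper's argument written directly rather than contrapositively.
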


\begin{proof}
Without loss of generality we may assume that $\I$ is an ideal on $\omega$.

(a)$\implies$(b): Let $X=\Phi(\mathcal{A})$ be in FinBW($\I$). Assume to the contrary that (b) does not hold. Then there is an $f:\omega\to\omega$ satisfying $f^{-1}[\{n\}]\in\I$ for all $n\in\omega$ such that for each $B\notin\I$ with $f|B$ finite-to-one we have $f[B]\cap A\notin\fin$ for infinitely many $A\in\mathcal{A}$. We will show that the sequence $(f(n))\subseteq X$ does not possess a convergent subsequence indexed by a set not belonging to $\I$.

Suppose that $B\notin\I$ and consider $(f(n))_{n\in B}$. Since $f^{-1}[\{n\}]\in\I$ for all $n\in\omega$, $(f(n))_{n\in B}$ cannot converge to any $n\in\omega$. Moreover, $(f(n))_{n\in B}$ cannot converge to any $A\in\mathcal{A}$, as there is an $A'\in\mathcal{A}$, $A\neq A'$ with $|f[B]\cap (A'\setminus A)|=|f[B]\cap A'|=\omega$. Finally, since $f[B]\cap A\notin\fin$ for some $A\in\mathcal{A}$, $(f(n))_{n\in B}$ cannot converge to $\infty$ (as $U=X\setminus (\{A\}\cup A)$ is an open neighborhood of $\infty$ such that infinitely many elements of the sequence $(f(n))_{n\in B}$ are outside $U$).

(b)$\implies$(a): We need to show that $X=\Phi(\mathcal{A})$ is in FinBW($\I$). Fix any $f:\omega\to X$. Without loss of generality we can assume that $f^{-1}[\{x\}]\in\I$ for all $x\in X$ (otherwise we are done as we would get a constant, so converging, sequence indexed by an $\I$-positive set). We have two possibilities: either $\fin^2\not\sqsubseteq \I|f^{-1}[X\setminus\omega]$ or $\fin^2\sqsubseteq \I|f^{-1}[X\setminus\omega]$. 

In the former case, we can find $B\notin\I$ such that $f[B]\subseteq X\setminus\omega$ and $f|B$ is finite-to-one. We claim that $(f(n))_{n\in B}$ converges to $\infty$. Indeed, let $U$ be a basic neighborhood of $\infty$. Then $(X\setminus\omega)\setminus U$ is finite. Since $f|B$ is finite-to-one and $f[B]\subseteq X\setminus\omega$, almost all elements of $(f(n))_{n\in B}$ have to belong to $U$.

In the latter case, there is a bijection $h:f^{-1}[X\setminus\omega]\to\omega^2$ witnessing $\fin^2\sqsubseteq \I|f^{-1}[X\setminus\omega]$. Define $\widetilde{h}:f^{-1}[X\setminus\omega]\to\omega$ by $\widetilde{h}(n)=\pi_1(h(n))$, for all $n\in f^{-1}[X\setminus\omega]$, where $\pi_1:\omega^2\to\omega$ is the projection onto the first coordinate. Let $g:\omega\to\omega$ be given by $g(n)=f(n)$, for all $n\in f^{-1}[\omega]$, and $g(n)=\widetilde{h}(n)$ for all $n\in f^{-1}[X\setminus\omega]$. Then $g^{-1}[\{n\}]\subseteq h^{-1}[\{n\}\times\omega]\cup f^{-1}[\{n\}]\in\I$, so there is a $B\notin\I$ such that $g|B\in\mathcal{D}_{\fin(B)}$ and $g[B]\in\fin^2(\mathcal{A})$. Define $C=B\cap f^{-1}[\omega]$. Then $f|C=g|C$ is finite-to-one and $f[C]=g[C]\in\fin^2(\mathcal{A})$. 

We claim that $C\notin\I$. Indeed, denote $D=B\cap f^{-1}[X\setminus\omega]$. Since $g|D\in\mathcal{D}_{\fin(D)}$ and $D\subseteq f^{-1}[X\setminus\omega]$, the function $\widetilde{h}|D$ is finite-to-one. This implies $h[D]\in\emptyset\otimes\fin\subseteq\fin^2$ and hence $D\in\I|f^{-1}[X\setminus\omega]$ (by the choice of $h$). As $C\cup D=B\notin\I$, we get that $C\notin\I$.

Since $C\notin\I$ and $f[C]\in\fin^2(\mathcal{A})$, there are two possibilities:
\begin{itemize}
\item either there are $C'\subseteq C$, $C'\notin\I$ and $A\in\mathcal{A}$ with $f[C']\subseteq A$ (in this case $(f(n))_{n\in C'}$ converges to $A$);
\item or there is a $C'\subseteq C$, $C'\notin\I$ with $f[C']\cap A\in\fin$ for all $A\in\mathcal{A}$ (in this case $(f(n))_{n\in C'}$ converges to $\infty$).
\end{itemize}
This finishes the proof.
\end{proof}

\begin{lemma}
\label{MAD}
Let $\I$ be an ideal. If $\mathcal{A}$ is an AD family such that for every $f\in\mathcal{D}_\I$ there is a $B\notin\I$ such that $f|B$ is finite-to-one and $f[B]\subseteq A$ for some $A\in\mathcal{A}$, then $\cA$ is a MAD family. 
\end{lemma}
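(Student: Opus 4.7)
The plan is to argue by contradiction: assume $\mathcal{A}$ is not maximal and produce an explicit $f \in \mathcal{D}_\I$ that falsifies the hypothesized conclusion. Suppose $\mathcal{A}$ fails to be MAD, so there is some $C \in [\omega]^\omega$ with $|C \cap A| < \omega$ for every $A \in \mathcal{A}$. Fix any bijection $\varphi \colon \omega \to C$ and regard it as a function $f \colon \omega \to \omega$ with $f[\omega] = C$.

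Next I would check that $f \in \mathcal{D}_\I$. This is immediate: for every $n \in \omega$ the fiber $f^{-1}[\{n\}]$ is either $\{\varphi^{-1}(n)\}$ (when $n \in C$) or $\emptyset$ (when $n \notin C$), and in both cases the fiber lies in $\fin \subseteq \I$. So $f$ is a legitimate member of $\mathcal{D}_\I$ to which the hypothesis applies.

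Applying the hypothesis, I obtain $B \notin \I$ such that $f|B$ is finite-to-one and $f[B] \subseteq A$ for some $A \in \mathcal{A}$. Since $f[B] \subseteq f[\omega] = C$, we get $f[B] \subseteq A \cap C$, which is finite by the almost-disjointness of $C$ from $\mathcal{A}$. Combining finiteness of $f[B]$ with the finite-to-one property of $f|B$, the set $B = \bigcup_{n \in f[B]} (B \cap f^{-1}[\{n\}])$ is a finite union of finite sets, hence finite, hence in $\I$. This contradicts $B \notin \I$ and completes the proof.

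There is essentially no real obstacle here: the argument is a direct extraction. The only thing to be mindful of is using that $\fin \subseteq \I$ (which is part of the paper's standing convention on ideals) to guarantee that $f \in \mathcal{D}_\I$ and that finite sets are automatically in $\I$, which is what drives the contradiction at both ends.
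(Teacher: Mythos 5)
Your proof is correct and is essentially identical to the paper's: both take a set $C$ witnessing non-maximality, use a bijection onto $C$ as the function in $\mathcal{D}_\I$, and derive the contradiction that any $B$ with $f[B] \subseteq A$ must be finite. The only cosmetic difference is that the paper works with a bijection $\bigcup\I \to C$ while you implicitly identify $\bigcup\I$ with $\omega$, and you invoke the finite-to-one property of $f|B$ where one could note directly that $f$ is injective, but neither affects correctness.
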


\begin{proof}
Suppose otherwise and let $C\subseteq\omega$ be infinite with $C\cap A\in\fin$ for all $A\in\mathcal{A}$. Fix any bijection $f:\bigcup\I\to C$. Then $f\in\mathcal{D}_{\fin(\bigcup\I)}\subseteq\mathcal{D}_\I$, but for any $B\subseteq\bigcup\I$ the condition $f[B]\subseteq A$, for some $A\in\mathcal{A}$, implies that $B\in\fin(\bigcup\I)\subseteq\I$. Thus, the function $f$ would contradict the assumptions of this lemma.
\end{proof}

\subsection{Consequences of CH and MA}

\begin{theorem}
\label{Mrowka}
(CH) If $\I$ is unboring, then there is an uncountable Mr\'{o}wka space in FinBW($\I$).
\end{theorem}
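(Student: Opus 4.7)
The plan is to use the corollary to Proposition \ref{MrowkaChar}: it suffices, under CH, to construct an uncountable AD family $\mathcal{A}\subseteq[\omega]^{\omega}$ such that for every $f\in\cD_\I$ there is $B\notin\I$ with $f|B$ finite-to-one and $f[B]\in\fin^2(\mathcal{A})$. Enumerate $\cD_\I=\{f_\alpha:\omega\leq\alpha<\omega_1\}$ using CH and fix a countably infinite AD family $\{A_n:n<\omega\}$ of infinite subsets of $\omega$ as the base, so that at every transfinite stage $\alpha\geq\omega$ the intermediate family is countably infinite AD and hence admits infinite almost disjoint extensions by the standard diagonal lemma. At stage $\alpha\geq\omega$, enumerate $\mathcal{A}_\alpha:=\{A_\beta:\beta<\alpha\}=\{C_n:n<\omega\}$, set $D_n:=C_n\setminus\bigcup_{i<n}C_i$ (infinite by pairwise AD) and $D_\omega:=\omega\setminus\bigsqcup_n D_n$, and define $\tilde\psi:\omega\to\omega$ by $\tilde\psi(m)=n+1$ when $m\in D_n$ and $\tilde\psi(m)=0$ when $m\in D_\omega$. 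Since $\I$ is unboring, Proposition \ref{q}(v) applied to $g:\bigcup\I\to\omega^2$, $g(n):=(\tilde\psi(f_\alpha(n)),f_\alpha(n))$, produces $B_\alpha\notin\I$ with $g[B_\alpha]\in\fin^2$ and $g|B_\alpha$ finite-to-one (constancy of $g$ on $B_\alpha$ would force $B_\alpha\subseteq f_\alpha^{-1}[\{j\}]\in\I$). Translating the $\fin^2$-condition through the fact that in $g[B_\alpha]$ the second coordinate determines the first, one obtains $m\in\omega$ such that $f_\alpha[B_\alpha]\cap D_j$ is finite for every $j\geq m$ (the case in which also $f_\alpha[B_\alpha]\cap D_\omega$ is finite is ruled out, as it would make $f_\alpha[B_\alpha]$ itself finite).

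Set $T_\alpha:=f_\alpha[B_\alpha]\setminus(C_0\cup\ldots\cup C_{m-1})$. Then $T_\alpha\cap A_\beta$ is finite for every $\beta<\alpha$: if $A_\beta=C_n$ with $n<m$ the intersection is empty, and if $n\geq m$ then $T_\alpha\cap C_n\subseteq f_\alpha[B_\alpha]\cap C_n$ is contained in a finite alteration of $f_\alpha[B_\alpha]\cap D_n$, which is finite. If $T_\alpha$ is infinite, define $A_\alpha:=T_\alpha$; it is new (each $T_\alpha\cap A_\beta$ is finite while each $A_\beta$ is infinite) and AD from $\mathcal{A}_\alpha$. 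If $T_\alpha$ is finite, pick $A_\alpha$ as any infinite set AD from $\mathcal{A}_\alpha$ via the standard extension lemma. In both cases $f_\alpha[B_\alpha]\subseteq C_0\cup\ldots\cup C_{m-1}\cup A_\alpha\cup(T_\alpha\setminus A_\alpha)$ where $T_\alpha\setminus A_\alpha$ is finite, and therefore $f_\alpha[B_\alpha]\in\fin^2(\mathcal{A})$. The recursion yields an AD family $\mathcal{A}$ of cardinality $\omega_1$ satisfying the condition of the corollary to Proposition \ref{MrowkaChar}, hence $\Phi(\mathcal{A})$ is the required uncountable Mr\'owka space in FinBW($\I$). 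The main subtlety is the design of the encoding $g$: its second coordinate reads off $f_\alpha$ and its first coordinate records which $D_n$ that value lies in, so Proposition \ref{q}(v) simultaneously makes $f_\alpha|B_\alpha$ finite-to-one and forces $f_\alpha[B_\alpha]$ to concentrate on finitely many $D_n$'s; once this is secured, the tail $T_\alpha$ is automatically AD from $\mathcal{A}_\alpha$ and is either absorbed as the new AD element or dismissed as a finite set.
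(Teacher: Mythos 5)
Your proof is correct and follows essentially the same strategy as the paper's: a CH-recursion through $\cD_\I$ in which, at stage $\alpha$, the enumeration of $\mathcal A_\alpha$ is encoded into a map to $\omega^2$ so that Proposition~\ref{q}(v) simultaneously produces a set $B_\alpha\notin\I$ on which $f_\alpha$ is finite-to-one and whose $f_\alpha$-image concentrates on finitely many blocks. Your encoding $g(n)=(\tilde\psi(f_\alpha(n)),f_\alpha(n))$ is an inessential reshuffling of the paper's $h\circ f_\alpha$ (where $h$ maps each block $B_i$ into $\{i\}\times\omega$); both record the block index in the first coordinate.

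There is one genuine difference, which works but is worth noting. The paper establishes the \emph{stronger} conclusion that $f_\alpha[B]\subseteq A$ for a single $A\in\mathcal A$; this lets it invoke Lemma~\ref{MAD} to conclude $\mathcal A$ is MAD, hence uncountable, so the enumeration is allowed to repeat. You instead aim for the weaker conclusion $f_\alpha[B_\alpha]\in\fin^2(\mathcal A)$ (covering $f_\alpha[B_\alpha]$ by $C_0\cup\dots\cup C_{m-1}\cup A_\alpha$ up to a finite error), which matches the Corollary to Proposition~\ref{MrowkaChar} but does not trigger Lemma~\ref{MAD}; you therefore secure uncountability by hand, by inserting a genuinely new $A_\alpha$ (possibly supplied by the standard countable-AD-extension lemma) at every stage. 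Both routes yield a legitimate uncountable AD family.

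One small error in the exposition: the parenthetical claim that ``the case in which also $f_\alpha[B_\alpha]\cap D_\omega$ is finite is ruled out, as it would make $f_\alpha[B_\alpha]$ itself finite'' is false, since $f_\alpha[B_\alpha]$ can still meet $D_0,\dots,D_{m-1}$ in infinite sets (and even the union of the finite intersections with $D_j$, $j\geq m$, can be infinite). Fortunately the remark is inert: you already treat both the case $T_\alpha$ infinite (take $A_\alpha:=T_\alpha$, which is AD from $\mathcal A_\alpha$ as you verify) and the case $T_\alpha$ finite (take any new AD extension), so the argument is unaffected. You should simply delete the parenthetical.
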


\begin{proof}
By Proposition \ref{MrowkaChar} and Lemma \ref{MAD}, it suffices to show that there is an AD family $\mathcal{A}$ such that for every $f\in\mathcal{D}_\I$ there is a $B\notin\I$ such that $f|B$ is finite-to-one and $f[B]\subseteq A$ for some $A\in\mathcal{A}$. 

Fix a list $\{f_\alpha:\ \omega\leq\alpha<\omega_1\}=\mathcal{D}_\I$. We will construct a family $\mathcal{A}=\{A_\alpha:\ \alpha<\omega_1\}$ by induction on $\alpha$. The enumeration may contain repetitions. At first let $\{A_n:\ n\in\omega\}$ be any partition of $\omega$ into infinite sets.

Suppose that $\omega\leq\alpha<\omega_1$ and that $A_\beta$, for all $\beta<\alpha$, have been chosen. Using CH, enumerate $\alpha$ as $(\beta_i)_{i\in\omega}$ and consider the partition $(B_i)$ of $\omega$, where $B_0=A_{\beta_0}$ and $B_i=A_{\beta_i}\setminus\bigcup_{j<i}A_{\beta_j}$, for all $0<i<\omega$. Let $h:\omega\to\omega^2$ be any injection satisfying $h[B_i]\subseteq\{i\}\times\omega$. Define $f:\omega\to\omega^2$ by $f=h\circ f_\alpha$. Since $\I$ is unboring, there is a $C\notin\I$ such that $f[C]\in\fin^2$ and $f|C$ is either constant or finite-to-one (by Proposition \ref{q}(v)). 

Observe that $f|C$ cannot be constant, as $f_\alpha^{-1}[\{n\}]\in\I$ for all $n\in\omega$ and $h$ is an injection. Thus, $f|C$ is finite-to-one. Since $f[C]\in\fin^2$ and $C\notin\I$, either $C\cap f^{-1}[\{n\}\times\omega]\notin\I$ for some $n\in\omega$ or $C\cap f^{-1}[D]\notin\I$ for some $D\in\emptyset\otimes\fin$. In the former case put $B=C\cap f^{-1}[\{n\}\times\omega]$ and $A_\alpha=A_{\beta_n}$. Then $B\notin\I$, $f_\alpha|B$ is finite-to-one and $f_\alpha[B]\subseteq B_n\subseteq A_\alpha$. In the latter case put $B=C\cap f^{-1}[D]$ and $A_\alpha=f_\alpha[B]$. Then $B\notin\I$, $f_\alpha|B$ is finite-to-one and $f_\alpha[B]\subseteq A_\alpha$. Moreover, $A_\alpha\cap A_{\beta_i}$ is finite for each $i\in\omega$ as $D\in\emptyset\otimes\fin$.
\end{proof}

\begin{theorem}
\label{MAlem}
(MA($\sigma$-centered)) If $\I$ is strongly unboring, then there is an uncountable separable Mr\'{o}wka space in FinBW($\I$).
\end{theorem}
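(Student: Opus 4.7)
The plan is to use MA($\sigma$-centered) to construct, by recursion on $\xi<\mathfrak{c}$, a pairwise almost disjoint family $\mathcal{A}=\{A_\xi:\xi<\mathfrak{c}\}$ of infinite subsets of $\omega$ such that every $f\in\mathcal{D}_\I$ is \emph{captured} by some $A\in\mathcal{A}$, meaning that there is $B\notin\I$ with $f|B$ finite-to-one and $f[B]\subseteq A$. Once such $\mathcal{A}$ is built, Proposition~\ref{MrowkaChar} yields $\Phi(\mathcal{A})\in\text{FinBW}(\I)$, and Lemma~\ref{MAD} shows that $\mathcal{A}$ is MAD, so $\Phi(\mathcal{A})$ is the required uncountable separable Mr\'{o}wka space.

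Enumerate $\mathcal{D}_\I=\{f_\xi:\xi<\mathfrak{c}\}$. At stage $\xi$ I apply strong unboringness to $f_\xi$ to fix $C_\xi\notin\I$ such that $f_\xi|C_\xi$ is finite-to-one and $\I|C_\xi$ is $\omega$-diagonalizable by $(\I|C_\xi)^\star$-universal families $\{\cZ_k:k\in\omega\}$. Set $D_\xi=f_\xi[C_\xi]$ and consider the pushforward ideal $\J_\xi=\{A\subseteq D_\xi:f_\xi^{-1}[A]\cap C_\xi\in\I\}$. A routine verification (using that $f_\xi|C_\xi$ is finite-to-one) shows that $\J_\xi$ is also $\omega$-diagonalizable by $\J_\xi^\star$-universal sets, witnessed by $\cZ_k^*=\{f_\xi[Z]:Z\in\cZ_k\}\subseteq[D_\xi]^{<\omega}\setminus\{\emptyset\}$.

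I then split into two cases. If there is $\beta<\xi$ with $A_\beta\cap D_\xi\notin\J_\xi$, then $A_\beta$ already captures $f_\xi$ via $B=f_\xi^{-1}[A_\beta]\cap C_\xi$, and I set $A_\xi$ to be any infinite set almost disjoint from every $A_\gamma$, $\gamma<\xi$ (possible since MA($\sigma$-centered) implies $\mathfrak{a}=\mathfrak{c}$ via $\mathfrak{p}\leq\mathfrak{h}\leq\mathfrak{a}$). Otherwise, $A_\beta\cap D_\xi\in\J_\xi$ for every $\beta<\xi$, and I build $A_\xi$ using the $\sigma$-centered forcing $\mathbb{P}_\xi$ of conditions $(s,F)$ with $s\in[D_\xi]^{<\omega}$ and $F\in[\xi]^{<\omega}$, ordered by $(s',F')\leq(s,F)$ iff $s\subseteq s'$, $F\subseteq F'$ and $(s'\setminus s)\cap\bigcup_{\beta\in F}A_\beta=\emptyset$; conditions sharing the same $s$-part are pairwise compatible. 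For each $\beta<\xi$ the set $\{(s,F):\beta\in F\}$ is dense, and for each $k\in\omega$ the set $\{(s,F):(\exists Z\in\cZ_k^*)\,Z\subseteq s\}$ is dense because the first defining property of $\omega$-diagonalizability of $\J_\xi$, applied to $\bigcup_{\beta\in F}(A_\beta\cap D_\xi)\in\J_\xi$, supplies $Z\in\cZ_k^*$ disjoint from this union, so $(s\cup Z,F)$ is the desired extension. MA($\sigma$-centered) produces a filter $G$ meeting these fewer than $\mathfrak{c}$ dense sets, and I set $A_\xi=\bigcup\{s:(s,F)\in G\}$; this set is almost disjoint from each earlier $A_\beta$ and contains an element of $\cZ_k^*$ for every $k$, so the second defining property of $\omega$-diagonalizability forces $A_\xi\notin\J_\xi$, whence $f_\xi$ is captured by $A_\xi$ (and in particular $A_\xi$ is infinite).

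The main obstacle I anticipate is ensuring density of the sets $\{(s,F):(\exists Z\in\cZ_k^*)\,Z\subseteq s\}$ in $\mathbb{P}_\xi$: this density hinges on applying the first property of $\omega$-diagonalizability to the finite union $\bigcup_{\beta\in F}(A_\beta\cap D_\xi)$, which requires that union to lie in $\J_\xi$. The two-case split is engineered precisely to secure this hypothesis, because whenever it fails some previously constructed $A_\beta$ already handles $f_\xi$ and no new forcing is needed, while at the stages where the forcing is actually invoked the required condition $A_\beta\cap D_\xi\in\J_\xi$ holds for all $\beta<\xi$ by the alternative of the dichotomy.
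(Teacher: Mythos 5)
Your proof is correct and follows essentially the same strategy as the paper: reduce to constructing a suitable AD family via Proposition~\ref{MrowkaChar} and Lemma~\ref{MAD}, apply strong unboringness to obtain $C$ and the diagonalizing families $\{\cZ_k\}$, case-split on whether an old $A_\beta$ already captures $f_\xi$, and otherwise force with an almost-disjoint-type $\sigma$-centered poset whose density arguments hinge on $\omega$-diagonalizability. The only (cosmetic) differences are that you transport the ideal and the diagonalizing families forward to $D_\xi=f_\xi[C_\xi]$ and force on the codomain (requiring the routine pushforward verification, which you flag), whereas the paper forces directly with finite subsets of $C$ and sets $A_\alpha=f_\alpha[B]$ afterward; and that in Case~1 you introduce a genuinely new almost-disjoint set (relying on $\mathfrak{a}=\mathfrak{c}$) where the paper more economically just sets $A_\alpha=A_\beta$, allowing repetitions in the enumeration.
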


\begin{proof}
By Proposition \ref{MrowkaChar} and Lemma \ref{MAD}, it suffices to show that there is an AD family $\mathcal{A}$ such that for every $f\in\mathcal{D}_\I$ there is a $B\notin\I$ such that $f|B$ is finite-to-one and $f[B]\subseteq A$ for some $A\in\mathcal{A}$.

Let $\{f_\alpha:\ \omega\leq\alpha<\cc\}$ be an enumeration of $\mathcal{D}_\I$. We will construct a family $\mathcal{A}=\{A_\alpha:\ \alpha<\cc\}$ by induction on $\alpha$. The enumeration may contain repetitions. At first let $\{A_n:\ n\in\omega\}$ be any partition of $\omega$ into infinite sets.

Suppose that $\omega\leq\alpha<\cc$ and that $A_\beta$ for all $\beta<\alpha$ have been chosen. Consider the function $f_\alpha$. Since $\I$ is strongly unboring, we can find $C\notin\I$ such that $f_\alpha|C$ is finite-to-one and $\I|C$ is $\omega$-diagonalizable by $(\mathcal{I}|C)^\star$-universal sets. 

If there is a $\beta<\alpha$ such that $D=C\cap f^{-1}[A_\beta]\notin\I$ then put $A_\alpha=A_\beta$ and note that $f_\alpha|D$ is finite-to-one and $f_\alpha[D]\subseteq A_\alpha$. 

Otherwise, $C\cap f^{-1}[A_\beta]\in\I$ for all $\beta<\alpha$. Consider the following poset: conditions of $\mathbb{P}$ are pairs $p=(B_p,F_p)\in[C]^{<\omega}\times[\alpha]^{<\omega}$ and for $p,q\in\mathbb{P}$ let $q\vdash p$ if and only if $B_p\subseteq B_q$, $F_p\subseteq F_q$ and $B_q\setminus B_p\subseteq C\setminus f_\alpha^{-1}[\bigcup_{\beta\in F_p}A_\beta]$.

Note that $\mathbb{P}$ is $\sigma$-centered as $\mathbb{P}=\bigcup_{B\in[C]^{<\omega}}\{p\in\mathbb{P}:\ B_p=B\}$ and given any $B\in[C]^{<\omega}$ and any $p$ and $q$ with $B_p=B_q=B$ we have $(B,F_p\cup F_q)\vdash p$ and $(B,F_p\cup F_q)\vdash q$.

Observe that for each $\beta<\alpha$ the open set $D_\beta=\{p\in\mathbb{P}:\ \beta\in F_p\}$ is dense in $\mathbb{P}$. Indeed, given any $p\in\mathbb{P}$ put $q=(B_p,F_p\cup\{\beta\})$ and note that $q\in D_\beta$ and $q\vdash p$.

Let $\{\cZ_k:k\in\omega\}$ be the family witnessing $\omega$-diagonalizability of $\I|C$ by $(\mathcal{I}|C)^\star$-universal sets. Recall that:
\begin{itemize} 
\item for each $k\in\omega$ the family $\cZ_k\subseteq [C]^{<\omega}\setminus \{\emptyset\}$ is such that for each $D\in \I|C$ there is a $Z\in \cZ_k$ with $Z\cap D=\emptyset$;
\item for each $D\in \I|C$ there is a $k\in\omega$ such that $Z\not\subseteq D$ for every $Z\in \cZ_k$. 
\end{itemize}
Define open sets $E_k=\{p\in\mathbb{P}:\ (\exists Z\in \cZ_k)\ Z\subseteq B_p\}$ for all $k\in\omega$. We need to show that each $E_k$ is dense. Let $k\in\omega$ and $p\in\mathbb{P}$. Since $C\cap f^{-1}[A_\beta]\in\I$ for all $\beta<\alpha$, we have $f_\alpha^{-1}[\bigcup_{\beta\in F_p}A_\beta]\in\I|C$. Thus, there is a $Z\in \cZ_k$ with $Z\cap f_\alpha^{-1}[\bigcup_{\beta\in F_p}A_\beta]=\emptyset$. Put $q=(B_p\cup Z,F_p)$. Clearly, $q\in E_k$ and $q\vdash p$. Thus, $E_k$ is dense.

Let $\mathcal{D}=\{E_n:\ n\in\omega\}\cup\{D_\beta:\ \beta<\alpha\}$. Then $|\mathcal{D}|<\cc$ and $\mathcal{D}$ consists of open dense sets. By MA($\sigma$-centered), there is a $\mathcal{D}$-generic filter $G\subseteq\mathbb{P}$. Define $B=\bigcup\{B_p:\ p\in G\}$. 

Observe that $B\notin\I|C$. Indeed, $B$ contains some $Z\in \cZ_k$, for each $k\in\omega$ (since $G\cap E_k\neq\emptyset$). However, $B\in\I|C$ would imply that there is a $k\in\omega$ such that $Z\not\subseteq B$ for every $Z\in \cZ_k$.

Note also that $f_\alpha[B]\cap A_\beta$ is finite for all $\beta<\alpha$. Indeed, fix $\beta<\alpha$ and observe that there is a $p\in G$ with $F_p\ni\beta$ (since $G\cap D_\beta\neq\emptyset$). We claim that $f_\alpha[B]\cap A_\beta\subseteq B_p\in[C]^{<\omega}$. To show this inclusion, fix any $q\in G$. Since $G$ is a filter, we can find $q'\in G$ with $q'\vdash q$ and $q'\vdash p$. Then $B_q\setminus B_p\subseteq B_{q'}\setminus B_p\subseteq C\setminus f_\alpha^{-1}[\bigcup_{\delta\in F_p}A_\delta]\subseteq C\setminus f_\alpha^{-1}[A_\beta]$.

Define $A_\alpha=f_\alpha[B]$ and note that $f_\alpha|B$ is finite-to-one (since $B\subseteq C$). This finishes the induction step and it is clear that the family $\mathcal{A}=\{A_\alpha:\ \alpha<\cc\}$ has the required properties.
\end{proof}

\section{Main results}

\begin{proposition}
\label{omega_1}
If $\I$ is a hereditary weak P-ideal then $\omega_1$ with order topology is in FinBW($\I$). In particular, there is a non-compact space in FinBW($\I$). 
\end{proposition}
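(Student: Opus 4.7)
The plan is to argue by transfinite induction on $\alpha<\omega_1$ the following strengthening: for every hereditary weak P-ideal $\I$ on a countable set $M$ and every sequence $(x_n)_{n\in M}\subseteq[0,\alpha]$, there is $A\notin\I$ such that $(x_n)_{n\in A}$ converges in the order topology. Since any sequence in $\omega_1$ has countable range and hence is bounded below some $\alpha<\omega_1$, this yields $\omega_1\in\text{FinBW}(\I)$. The induction is legitimate because whenever $B\notin\I$ the restriction $\I|B$ is again a hereditary weak P-ideal: for $C\subseteq B$ with $C\notin\I$ one checks $(\I|B)|C=\I|C$, and the property $\fin^2\not\sqsubseteq\I|C$ transfers verbatim from $\I$.

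The base $\alpha=0$ is trivial. For the successor step $\alpha=\beta+1$, consider $S=\{n\in M:x_n=\alpha\}$: if $S\notin\I$ then $(x_n)_{n\in S}$ is constant, hence convergent; otherwise $M\setminus S\notin\I$ and $(x_n)_{n\in M\setminus S}\subseteq[0,\beta]$, so we apply the inductive hypothesis to $\I|(M\setminus S)$.

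For the limit step at a countable limit ordinal $\alpha$, if some level set $\{n:x_n=\beta\}$ with $\beta\leq\alpha$ fails to lie in $\I$, we are done as above. Otherwise, fix a strictly increasing sequence $\gamma_k\nearrow\alpha$ and consider the partition $M=B_\infty\cup\bigcup_k B_k$, where $B_\infty=\{n:x_n=\alpha\}$ and $B_k=\{n:\gamma_{k-1}<x_n\leq\gamma_k\}$ (with $\gamma_{-1}=-1$). If some $B_k\notin\I$, the inductive hypothesis applied to $\I|B_k$ and $(x_n)_{n\in B_k}\subseteq[0,\gamma_k]$ finishes the job. Otherwise every piece of the partition lies in $\I$; since $\I$ is a hereditary weak P-ideal it is in particular a weak P-ideal (Proposition \ref{boringprop}(a) gives $\fin^2\not\sqsubseteq\I$), so there exists $A\notin\I$ whose intersection with $B_\infty$ and with each $B_k$ is finite. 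For such $A$ and any $\beta<\alpha$, picking $k$ with $\beta\leq\gamma_k$ makes $A\cap\{n:x_n\leq\gamma_k\}=A\cap(B_0\cup\cdots\cup B_k)$ finite, while $A\cap B_\infty$ is finite; thus for cofinitely many $n\in A$ we have $\gamma_k<x_n<\alpha$, so $(x_n)_{n\in A}$ converges to $\alpha$.

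The main point is the coordinated use of the weak P-ideal property applied to the whole partition $\{B_k\}\cup\{B_\infty\}$ in the limit step, while hereditariness is precisely what keeps this property alive through the recursive restrictions $\I\rightsquigarrow\I|B$. The ``in particular'' clause is immediate: $\omega_1$ with the order topology is not compact (the cover $\{[0,\beta):\beta<\omega_1\}$ has no finite subcover), yet now lies in FinBW$(\I)$.
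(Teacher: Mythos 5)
Your proof is correct, and it takes a genuinely different route from the paper's. The paper avoids transfinite induction entirely: assuming all fibers $g^{-1}[\{\alpha\}]$ lie in $\I$, it directly identifies the convergence point as $z=\min\{\alpha<\sup g:\ g^{-1}[[0,\alpha)]\notin\I\}$ (or $\sup g$ if this set is empty), observes that the minimality of $z$ forces each $g^{-1}[[0,\alpha_{n+1})]$ with $\alpha_n\nearrow z$ to lie in $\I$, and then makes a single application of the weak P-ideal property of $\I|B$ with $B=g^{-1}[[0,z)]$ to extract an $\I$-positive set converging to $z$. Your argument instead proves the stronger claim that every interval $[0,\alpha]$, $\alpha<\omega_1$, is in FinBW$(\I)$ by transfinite induction, pushing the problem down to smaller $\gamma_k<\alpha$ whenever some block $B_k$ is $\I$-positive, and using the weak P-ideal property of $\I$ only in the terminal case where every block is in $\I$. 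Hereditariness plays the same structural role in both proofs (it is what makes the restriction to $B$, respectively $B_k$, harmless), but your proof leans on it repeatedly through the recursion $\I\rightsquigarrow\I|B_k$ rather than once; what it buys you is a more modular, step-by-step exposition at the cost of the paper's one-shot slickness in locating $z$. One small stylistic point: your transfinite induction must be quantified over all hereditary weak P-ideals (and all countable index sets) simultaneously, exactly as you state it — phrasing it for a fixed $\I$ would not support the restriction in the inductive step — and you correctly verify that $(\I|B)|C=\I|C$ preserves the hereditary weak P-ideal property. The deduction of the non-compactness clause is also fine.
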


\begin{proof}
Let $g:\omega\to\omega_1$. Without loss of generality we can assume that $g^{-1}[\{\alpha\}]\in\I$ for all $\alpha<\omega_1$. 

Find $x<\omega_1$ such that $B=g^{-1}[[0,x)]\notin\I$ and $g^{-1}[[0,x')]\in\I$ for all $x'<x$. 

Let $(\alpha_n)\subseteq $x be any increasing sequence such that $\alpha_0=0$ and $\lim_n \alpha_n=$x. Fix any injection $h:B\to\omega^2$ such that $h[g^{-1}[[\alpha_n,\alpha_{n+1})]]\subseteq\{n\}\times\omega$, for all $n\in\omega$. Observe that $h^{-1}[\{n\}\times\omega]\subseteq g^{-1}[[0,\alpha_{n+1})]\in\I|B$ for all $n$, as $\alpha_{n+1}<x$. Since $\I$ is a hereditary weak P-ideal, $\fin^2\not\sqsubseteq\I|B$. Thus, there is a $C\notin\I$, $C\subseteq B$ such that $g[C]\cap[\alpha_n,\alpha_{n+1})$ is finite for all $n$. This means that $(g(n))_{n\in C}$ converges to $x$.
\end{proof}

\begin{proposition}
\label{omega^2}
If $\I$ is unboring then $\omega^2+1$ with order topology is in FinBW($\I$).
\end{proposition}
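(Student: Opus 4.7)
The plan is to apply Proposition~\ref{q}(iv), whose two-tier conclusion -- finite intersection with each cell of the partition, plus finite intersection with almost every ``row'' $\bigcup_j X_{(i,j)}$ -- matches the two-tier topology of $\omega^2+1$: the blocks $L_i:=[\omega\cdot i,\omega\cdot(i+1))$ accumulate to the top point $\omega^2$, while inside each $L_i$ the successor ordinals accumulate to $\omega\cdot(i+1)$.

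Fix $g\colon\omega\to\omega^2+1$. If $g^{-1}[\{x\}]\notin\I$ for some $x$, a constant subsequence suffices, so assume $g^{-1}[\{x\}]\in\I$ for every $x$; in particular $g^{-1}[\{\omega^2\}]\in\I$, and after passing to the $\I$-positive set $\omega\setminus g^{-1}[\{\omega^2\}]$ we may assume $g\colon\omega\to\omega^2$. Setting $X_{(i,j)}:=g^{-1}[\{\omega\cdot i+j\}]$, Proposition~\ref{q}(iv) supplies $A\notin\I$ and $k\in\omega$ with $A\cap X_{(i,j)}$ finite for every $(i,j)$ and $A\cap g^{-1}[L_i]$ finite for every $i\geq k$.

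Split $A=A_{\mathrm{lo}}\sqcup A_{\mathrm{hi}}$, with $A_{\mathrm{lo}}=A\cap\bigcup_{i<k}g^{-1}[L_i]$ and $A_{\mathrm{hi}}=A\cap\bigcup_{i\geq k}g^{-1}[L_i]$; since $A\notin\I$, at least one piece is $\I$-positive. If $A_{\mathrm{hi}}\notin\I$, then $g|A_{\mathrm{hi}}$ meets each $L_i$ only finitely, and a direct inspection of basic neighborhoods $(\alpha,\omega^2]$ shows that $(g(m))_{m\in A_{\mathrm{hi}}}$ converges to $\omega^2$. Otherwise $A_{\mathrm{lo}}\notin\I$, and pigeonhole over the finite set $\{0,\dots,k-1\}$ gives $i^*<k$ with $B:=A\cap g^{-1}[L_{i^*}]\notin\I$; since $B\cap X_{(i^*,j)}$ is finite for each $j$, the map $g|B$ is finite-to-one into $L_{i^*}$, so $(g(m))_{m\in B}$ converges to $\omega\cdot(i^*+1)$.

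The whole argument is routine once Proposition~\ref{q}(iv) is in hand. The one observation worth flagging is that \emph{both} types of accumulation in $\omega^2+1$ -- the interior limits $\omega\cdot(i^*+1)$ and the global limit $\omega^2$ -- have to be produced, and they correspond cleanly to the two halves of the dichotomy $A_{\mathrm{lo}}\notin\I$ versus $A_{\mathrm{hi}}\notin\I$; no hypothesis on $\I$ beyond unboringness is needed.
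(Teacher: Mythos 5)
Your proof is correct and follows essentially the same route as the paper: set up a partition of $\omega$ indexed by $\omega^2$ from the fibers of $g$, invoke Proposition~\ref{q}(iv), and read off convergence either to an interior limit $\omega\cdot(i^*+1)$ or to the top point $\omega^2$. The one step you should not gloss over is ``after passing to the $\I$-positive set $\omega\setminus g^{-1}[\{\omega^2\}]$ we may assume $g\colon\omega\to\omega^2$.'' Proposition~\ref{q}(iv) is stated for partitions of $\bigcup\I$, so applying it after the restriction requires that $\I|(\omega\setminus g^{-1}[\{\omega^2\}])$ itself be unboring. This is true, and easy to see from Proposition~\ref{q}(i): if $f\colon B\to\omega^3$ witnessed $\BI\leq_K\I|B$ with $\omega\setminus B\in\I$, then any extension of $f$ to $\omega$ would witness $\BI\leq_K\I$ since $f^{-1}$-preimages only gain a set from $\I$. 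But as written the reduction is asserted without justification. The paper sidesteps this entirely by folding $g^{-1}[\{\omega^2\}]$ into the cell $X_{(0,0)}=g^{-1}[\{0\}]\cup g^{-1}[\{\omega^2\}]$, so that the partition lives on all of $\omega$ from the start; adopting that device would make your argument airtight with no extra lemma. Your case split ($A_{\mathrm{lo}}\notin\I$ versus $A_{\mathrm{hi}}\notin\I$) differs cosmetically from the paper's (some $A_i\notin\I$ versus all $A_i\in\I$), and if anything is a little cleaner since it avoids passing to $A'=\bigcup\{A_i: A_i\text{ finite}\}$, but the underlying idea is identical.
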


\begin{proof}
Let $g:\omega\to\omega^2+1$. Without loss of generality we can assume that $g^{-1}[\{\alpha\}]\in\I$ for all $\alpha<\omega^2$. Define $X_{(0,0)}=g^{-1}[\{0\}]\cup g^{-1}[\{\omega^2\}]$ and $X_{(i,j)}=g^{-1}[\{i\cdot\omega+j\}]$ for all $(i,j)\in\omega^2\setminus\{(0,0)\}$. Then each $X_{(i,j)}$ belongs to $\I$. By Proposition \ref{q}(iv), there is an $A\notin\I$ such that $A\cap X_{(i,j)}$ is finite for all $i,j\in\omega$ and $A\cap\bigcup_{j}X_{(i,j)}$ is finite for almost all $i\in\omega$, i.e., $\{(i,j)\in\omega:\ A\cap X_{(i,j)}\neq\emptyset\}\in\fin^2$. If $A_i=A\cap\bigcup_{j}X_{(i,j)}\notin\I$ for some $i\in\omega$ then $(i+1)\cdot\omega$ is a limit of $(g(n))_{n\in A_i}$. Otherwise, if $A_i\in\I$ for all $i\in\omega$ then $\omega^2$ is a limit of $(g(n))_{n\in A'}$, where $A'=\bigcup\{A_i:\ A_i\text{ is finite}\}$.
\end{proof}

\begin{proposition}
\label{f}
If $\I$ is boring then each space in FinBW($\I$) is boring. 
\end{proposition}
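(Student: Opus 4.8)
The plan is to prove the contrapositive in spirit: assuming $\I$ is boring, i.e.\ $\BI\sqsubseteq\I$, I will take an arbitrary Hausdorff space $X\in\mathrm{FinBW}(\I)$ and show it is boring, namely that $X$ is sequentially compact and there is a finite $F\subseteq X$ such that every surjective convergent sequence in $X$ converges to a point of $F$. Sequential compactness is immediate, since $\BI\supseteq\fin$ (so $\mathrm{FinBW}(\I)\subseteq\mathrm{FinBW}(\fin)$), and in fact it follows more directly from $\I$ containing all finite sets. The real content is producing the finite set $F$.

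First I would isolate the set $F$ of all points $x\in X$ that are the limit of some injective (equivalently, surjective onto its range) convergent sequence $(x_n)_{n\in\omega}$ in $X$ with all $x_n$ distinct and $x_n\neq x$; call these the \emph{accumulation points reachable by a sequence}. I must show $F$ is finite. Suppose not; then there are infinitely many such points $y_0,y_1,\ldots\in F$, and for each $i$ an injective sequence $(x^i_n)_n$ converging to $y_i$. The idea is to assemble a single sequence $(z_m)_{m\in\omega}$ out of these, indexed so that the index set decomposes as a copy of $\omega^3$ (or $\omega\times\omega\times\omega$) matching the structure of $\BI$, and then use the hypothesis $\BI\sqsubseteq\I$ via Proposition~\ref{q}. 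Concretely, since $\BI\sqsubseteq\I$, there is a partition $(X_{(i,j,k)})$-style structure on $\bigcup\I$; dually, by the failure of item (iv)/(v) of Proposition~\ref{q}, for a suitably chosen map $f$ into $\omega^3$ built from the enumeration of the $y_i$ and the $x^i_n$, \emph{no} $\I$-positive set $A$ can simultaneously be finite-to-one over $f$ and have $f[A]$ in the appropriate thin ideal. I would arrange the assignment so that: an $\I$-positive convergent subsequence that meets one ``column'' $\{i\}\times\omega^2$ infinitely often must converge to $y_i$ (impossible since only finitely many columns can be hit, forcing $f[A]\in\fin^2$), that hitting a single $\{(i,j)\}\times\omega$ infinitely often is ruled out because those correspond to sets already placed in $\I$, and that spreading thinly across infinitely many columns makes the sequence fail to converge (its range accumulates only at the $y_i$'s and at the points $x^i_n$, none of which it approaches along an $\I$-positive set). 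The upshot: the assembled sequence has no $\I$-positive convergent subsequence, contradicting $X\in\mathrm{FinBW}(\I)$. Hence $F$ is finite.

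Finally I would check that $F$ as defined does the job: any surjective convergent sequence in $X$ is in particular a sequence witnessing that its limit lies in $F$, unless it is eventually constant, in which case its limit is attained and again lies in the (finite) set --- here I should be slightly careful and either include isolated-type limit points or observe that a surjective convergent sequence onto an infinite set is, after passing to a subsequence, injective, so its limit is in $F$; a surjective sequence onto a finite set converges only if eventually constant, and then Hausdorffness pins the limit. Combining, $X$ is boring.

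The main obstacle I expect is the bookkeeping in the middle step: choosing the index coding $f\colon \bigcup\I\to\omega^3$ (or the dual partition $(X_{(i,j)})$) so that the three regimes in Proposition~\ref{q}(iv) correspond exactly to the three ways a subsequence of the assembled sequence could try to converge --- to some $y_i$ (columns), to some $x^i_n$ (the $\{(i,j)\}\times\omega$ blocks), or to a genuinely new limit (the thin ``diagonal'' case, which must be excluded by a Hausdorff separation argument showing the assembled sequence has no other accumulation points reachable by an $\I$-positive set). Making the Hausdorff separation argument clean --- so that a thinly spread $\I$-positive subsequence provably fails to converge --- is the delicate part, and may require first passing to subsequences of each $(x^i_n)_n$ lying in pairwise-disjoint open sets around the $y_i$, which is possible since a convergent sequence to distinct limits can be separated finitely at a time but not all at once; instead I would use that the $y_i$ themselves, being infinitely many, have an accumulation point or are discrete, and handle the two cases separately.
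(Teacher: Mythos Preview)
Your approach matches the paper's, and it works, but you are overthinking the Hausdorff separation step and have the direction of the witnessing map slightly off.

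The paper uses Proposition~\ref{q}(v) directly: since $\I$ is boring, there is $f\colon\omega\to\omega^2$ (into $\omega^2$, not $\omega^3$) such that no $A\notin\I$ has $f[A]\in\fin^2$ with $f|A$ constant or finite-to-one; in particular $f^{-1}[\{(i,j)\}]\in\I$ for all $(i,j)$. Having picked distinct $x_i$ and injective sequences $(x_{i,j})_j\to x_i$, one thins so that all the $x_{i,j}$ are pairwise distinct (possible because sequences with different Hausdorff limits share only finitely many terms --- a step you omitted), and then simply defines $g(n)=x_{f(n)}$. The map $f$ is given by the hypothesis; it is not built from the $x_i$ and $x_{i,j}$.

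The separation argument is then far simpler than you fear. Suppose $(g(n))_{n\in A}$ converges to some $z\in X$. If $g[A]$ is finite then $A$ lies in a finite union of fibers $f^{-1}[\{(i,j)\}]\in\I$. Otherwise $g|A$, hence $f|A$, is finite-to-one. For \emph{arbitrary} $z$ and any $i$ with $x_i\neq z$, choose disjoint open $U\ni z$ and $V\ni x_i$; almost all $x_{i,j}$ lie in $V$ (since $(x_{i,j})_j\to x_i$) while almost all $g(n)$, $n\in A$, lie in $U$, so $f[A]\cap(\{i\}\times\omega)$ is finite. At most one $i$ can satisfy $x_i=z$, hence $f[A]\in\fin^2$ and $A\in\I$. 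There is no need to pre-separate the $x_i$ into pairwise disjoint neighborhoods, and whether the $x_i$ accumulate anywhere is entirely irrelevant: you only ever separate $z$ from one $x_i$ at a time.
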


\begin{proof}
We will apply Proposition \ref{q}(v). Suppose that $\I$ is boring and $f:\omega\to\omega^2$ is the witnessing function. Observe that $f^{-1}[\{(i,j)\}]\in\I$, for all $(i,j)\in\omega^2$ (otherwise for $A=f^{-1}[\{(i,j)\}]\notin\I$ the function $f|A$ would be constant and $f[A]$ would belong to $\fin^2$).

Fix any unboring $X$. Then one can find an infinite set $\{x_i:\ i\in\omega\}\subseteq X$ and a family of injective convergent sequences $\{(x_{i,j})_j:\ i\in\omega\}$ such that $\lim_j x_{i,j}=x_i$ for each $i\in\omega$. Without loss of generality we can assume that $x_{i,j}\neq x_{i',j'}$ whenever $(i,j)\neq(i',j')$ (since the intersection $\{x_{i,j}:\ j\in\omega\}\cap\{x_{i',j}:\ j\in\omega\}$ is finite, for all $i\neq i'$, as $(x_{i,j})_j$ and $(x_{i',j})_j$ have different limits).

Define a sequence $g:\omega\to X$ by $g(n)=x_{f(n)}$. We claim that for each $A\subseteq\omega$, if $(g(n))_{n\in A}$ is convergent, then $A\in\I$ (i.e., that $X$ is not in FinBW($\I$)).

Let $A\subseteq\omega$ be such that $(g(n))_{n\in A}$ is convergent. Since $f^{-1}[\{(i,j)\}]\in\I$, for all $(i,j)\in\omega^2$, if $g[A]$ is finite, then $A\in\I$. Thus, as $(g(n))_{n\in A}$ is convergent, we can assume that $g|A$ is finite-to-one. Then also $f|A$ is finite-to-one. Moreover, if $(g(n))_{n\in A}$ converges, then $f[A]\in\fin^2$. Consequently, $A\in\I$. 
\end{proof}

\begin{corollary}
\label{c}
If $\I$ is boring then each space in FinBW($\I$) is compact and there are no uncountable separable spaces in FinBW($\I$).
\end{corollary}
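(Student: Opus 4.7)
The plan is to derive the corollary immediately from Proposition \ref{f} together with parts (iii) and (iv) of Proposition \ref{boringspaces}, so the work is essentially bookkeeping rather than any new construction.

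First, I would invoke Proposition \ref{f}: since $\I$ is boring, every Hausdorff space in FinBW($\I$) is boring. Note that membership in FinBW($\I$) already requires the space to be Hausdorff, by the definition given at the start of the paper, so this applies to every $X\in\text{FinBW}(\I)$ without any additional hypothesis.

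Next, for compactness, I would apply Proposition \ref{boringspaces}(iii), which states that every boring space is compact; combining with the previous step yields that each $X\in\text{FinBW}(\I)$ is compact. For the second assertion, I would apply Proposition \ref{boringspaces}(iv): a boring space is separable if and only if it is countable. Hence if $X\in\text{FinBW}(\I)$ were uncountable and separable, the first step would force $X$ to be boring while the second step would then force $X$ to be countable, a contradiction.

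There is no real obstacle here, since the two propositions being cited do all the heavy lifting; the only thing to be careful about is that Proposition \ref{f} is stated for Hausdorff spaces, but the definition of FinBW($\I$) bakes in the Hausdorff assumption, so no extra hypothesis is required.
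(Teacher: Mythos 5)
Your proof is correct and follows exactly the paper's own argument: apply Proposition \ref{f} to conclude every space in FinBW($\I$) is boring, then invoke Proposition \ref{boringspaces}(iii) and (iv). The extra remark that the Hausdorff hypothesis is built into the definition of FinBW($\I$) is a nice clarification but matches what the paper implicitly uses.
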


\begin{proof}
If $\I$ is boring then each space in FinBW($\I$) is boring by Proposition \ref{f}. Thus, by Proposition \ref{boringspaces}, each space in FinBW($\I$) is compact and there are no uncountable separable spaces in FinBW($\I$).
\end{proof}

Recall that for any ideal $\I$, each finite space is in FinBW($\I$) and each space from FinBW($\I$) is sequentially compact.

\begin{theorem}
\label{extreme}
The following hold for any ideal $\I$:
\begin{enumerate}
\item[(a)] $\fin^2\sqsubseteq\I$ if and only if FinBW($\I$) coincides with finite spaces.
\item[(b)] $\I$ is boring and $\fin^2\not\sqsubseteq\I$ if and only if FinBW($\I$) coincides with boring spaces.
\item[(c)] $\I$ is not tall if and only if FinBW($\I$) coincides with sequentially compact spaces.
\end{enumerate} 
\end{theorem}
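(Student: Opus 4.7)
The plan is to prove each of the three equivalences separately, splitting into forward and backward directions and leveraging the propositions already developed in this section.

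For (a), the direction $(\Leftarrow)$ is immediate from Proposition \ref{boringspaces2}: if $\fin^2\not\sqsubseteq\I$, the one-point compactification of $\omega$ (an infinite boring space) lies in $\text{FinBW}(\I)$, contradicting the hypothesis. For $(\Rightarrow)$, fix a bijection $h:\bigcup\I\to\omega^2$ witnessing $\fin^2\sqsubseteq\I$ (available by Proposition \ref{propertyKat}) and let $X$ be an infinite Hausdorff space. If $X$ is not sequentially compact we are done trivially, so assume $X$ is sequentially compact; then $X$ contains an injective convergent sequence $(y_k)\to y$ with $y_k\neq y$ for every $k$. Define $g:\bigcup\I\to X$ by $g(k)=y_{\pi_1(h(k))}$, where $\pi_1:\omega^2\to\omega$ is the first projection, and aim to show $(g(k))_{k\in A}$ diverges for every $A\notin\I$. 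The argument splits by whether $g|A$ is finite-to-one: if not, some $y_j$ is attained infinitely often on $A$, and Hausdorff uniqueness of limits forces any limit to be $y_j$, which requires $A\setminus h^{-1}[\{j\}\times\omega]$ finite and hence $A\in\I$; if yes, the finite-to-oneness of $\pi_1\circ h|A$ forces its values to tend to infinity, making $(g(k))_{k\in A}\to y$ automatically, but convergence to $y$ requires $A\cap h^{-1}[\{i\}\times\omega]$ to be finite for every $i$ (since Hausdorffness lets us separate $y$ from each $y_i$, so every $i$ lies in some $F_U=\{j:y_j\notin U\}$), contradicting $h[A]\notin\fin^2$.

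For (b), the direction $(\Rightarrow)$ combines Proposition \ref{f} (every space in FinBW($\I$) is boring when $\I$ is boring) with Proposition \ref{boringspaces2} (every boring space is in FinBW($\I$) when $\fin^2\not\sqsubseteq\I$). For $(\Leftarrow)$, assume $\text{FinBW}(\I)$ equals the class of boring spaces. Since the one-point compactification of $\omega$ is then an infinite space in $\text{FinBW}(\I)$, part (a) gives $\fin^2\not\sqsubseteq\I$. If $\I$ were unboring, Proposition \ref{omega^2} would place $\omega^2+1$ in $\text{FinBW}(\I)$; but $\omega^2+1$ is not a finite disjoint union of one-point compactifications of discrete spaces (its non-isolated points themselves accumulate at $\omega^2$), so by Proposition \ref{boringspaces}(ii) it is not boring, a contradiction.

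For (c), $(\Rightarrow)$ is direct: if $A\in[\bigcup\I]^\omega$ witnesses the failure of tallness (so $[A]^\omega\cap\I=\emptyset$), then for any sequentially compact $X$ and sequence $(x_n)_{n\in\bigcup\I}\subseteq X$, sequential compactness produces a convergent subsequence $(x_n)_{n\in B}$ with $B\in[A]^\omega$, and $B\notin\I$. For $(\Leftarrow)$, assume $\I$ is tall (on $\omega$, WLOG). I would apply Zorn's lemma to obtain a maximal AD family $\mathcal{A}\subseteq\I$; tallness upgrades $\mathcal{A}$ to a MAD family in $[\omega]^\omega$, since any infinite $B$ almost disjoint from every member of $\mathcal{A}$ would contain (by tallness) an infinite $B'\in\I$, and $\mathcal{A}\cup\{B'\}$ would contradict maximality in $\I$. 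Then $\Phi(\mathcal{A})$ is sequentially compact, and the sequence $x_n=n$ has no $\I$-positive convergent subsequence: an infinite $B$ with $(x_n)_{n\in B}$ convergent either converges to an isolated point (forcing $B$ finite), to some $A\in\mathcal{A}$ (requiring $B\subseteq A$ cofinitely, hence $B\in\I$ since $A\in\I$), or to $\infty$ (requiring $B\cap A$ finite for every $A\in\mathcal{A}$, forcing $B$ finite by MADness). The main obstacle is the forward direction of (a): the convergence analysis of $g$ in a general sequentially compact Hausdorff space (rather than a one-point compactification of discrete) needs care, the key observations being that every subsequence of $(y_k)$ still converges to $y$ and that Hausdorffness supplies enough open neighborhoods of $y$ separating it from each $y_i$ to block convergence to $y$ when $h[A]\notin\fin^2$.
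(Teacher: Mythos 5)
Your argument is essentially the paper's for parts (a) and (b), and for (c) you supply a proof where the paper simply cites \cite[Proposition~2.4]{FT}; that proof (Zorn to get a maximal AD family inside $\I$, tallness promotes it to MAD, $\Phi(\cA)$ is sequentially compact, and the identity sequence $n\mapsto n$ has no $\I$-positive convergent subsequence) is correct and is the standard one. Two remarks on the forward direction of (a), which is the most delicate part. First, the paper works with an \emph{arbitrary} injective sequence $(x_n)$ and argues directly: if $(y_n)_{n\in A}$ converges to $z$, then Hausdorffness separates $z$ from every $x_i\neq z$, so at most one column of $h[A]$ is infinite, whence $h[A]\in\fin^2$ and $A\in\I$. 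You instead insist on an injective \emph{convergent} sequence $(y_k)\to y$ and split on whether $g|A$ is finite-to-one; this is more machinery than needed, and your Case~2 is circular in presentation (you derive convergence to $y$ from finite-to-oneness and then use convergence to re-derive finite-to-oneness), though the punchline -- finite-to-oneness alone forces $h[A]\in\emptyset\otimes\fin\subseteq\fin^2$, contradicting $A\notin\I$ -- is correct and makes the digression about convergence to $y$ superfluous. Second, in Case~1 your intermediate claim that convergence to $y_j$ ``requires $A\setminus h^{-1}[\{j\}\times\omega]$ finite'' is not immediate from separating $y_j$ from each $y_i$ individually: that gives only that each column $A\cap h^{-1}[\{i\}\times\omega]$ ($i\neq j$) is finite, which is a union of infinitely many finite sets. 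To upgrade to ``$A\setminus h^{-1}[\{j\}\times\omega]$ is finite'' you must additionally invoke $(y_k)\to y$ and $y\neq y_j$ (so only finitely many $y_i$ lie outside a fixed neighborhood of $y$ disjoint from a neighborhood of $y_j$), a step you gesture at but do not spell out; the paper sidesteps this entirely by observing that ``all columns $i\neq j$ finite'' already means $h[A]\in\fin^2$, which suffices. Neither issue is fatal, but the paper's route for (a) is both shorter and avoids the need for a convergent reference sequence.
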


\begin{proof}
Without loss of generality we can assume that $\I$ is an ideal on $\omega$.

(a): Suppose first that $\fin^2\sqsubseteq\I$ and let $f:\omega\to\omega^2$ be a bijection witnessing it. Suppose that there is an infinite Hausdorff $X\in\text{FinBW}(\I)$ and let $(x_n)\subseteq X$ be an injective sequence. Define $y_{f^{-1}(i,j)}=x_{i}$ for all $i,j\in\omega$ (note that the sequence $(y_n)$ is well-defined as $f$ is a bijection). Suppose that $(y_n)_{n\in A}$ converges to $z\in X$, for some $A\subseteq\omega$. Then $A\cap f^{-1}[\{i\}\times\omega]$ is finite whenever $x_i\neq z$ (because there is an open set $U\ni z$ with $x_i\notin U$, since $X$ is Hausdorff). Thus, either $z\notin\{x_i:i\in\omega\}$ and $A\cap f^{-1}[\{i\}\times\omega]$ is finite for all $i\in\omega$ or $z=x_n$ for some $n\in\omega$ and $A\cap f^{-1}[\{i\}\times\omega]$ is finite for all $i\in\omega\setminus\{n\}$. In both cases $A\in\I$. Therefore, $X$ cannot be in FinBW($\I$).

Suppose now that $\fin^2\not\sqsubseteq\I$. Consider the space $X=\{0\}\cup\{\frac{1}{n+1}:\ n\in\omega\}$ with the subspace topology inherited from $\mathbb{R}$. We claim that $X$ is an infinite space belonging to FinBW($\I$). Let $f:\omega\to X$. Without loss of generality we may assume that $f^{-1}[\{x\}]\in \I$ for all $x\in X$. The family $(f^{-1}[\{x\}])_{x\in X}$ defines a partition of $\omega$ into sets belonging to $\I$. As $\fin^2\not\sqsubseteq\I$, we can find $A\notin\I$ such that $A\cap f^{-1}[\{x\}]$ is finite for all $x\in X$. Thus, $(f(n))_{n\in A}$ is a finite-to-one subsequence. In the space $X$ each such subsequence converges to $0\in X$. Therefore, $X$ is in FinBW($\I$).

(b): If $\I$ is unboring then, by Proposition \ref{omega^2}, there is an unboring Hausdorff space in FinBW($\I$). If $\fin^2\sqsubseteq\I$ then FinBW($\I$) coincides with finite spaces by item (a), so not every boring space is in FinBW($\I$) (by Proposition \ref{boringspaces}(i)). Moreover, by Proposition \ref{boringspaces2}, if $\fin^2\not\sqsubseteq\I$ then each boring space is in FinBW($\I$). Finally, if $\I$ is boring then all Hausdorff spaces in FinBW($\I$) are boring by Proposition \ref{f}.

(c): This is \cite[Proposition 2.4]{FT}.
\end{proof}

\begin{theorem}
\label{char}
The following are equivalent for any ideal $\I$:
\begin{itemize}
\item[(a)] $\I$ is unboring;
\item[(b)] there is a Hausdorff unboring space in FinBW($\I$).
\end{itemize}
If additionally CH holds, then the above are equivalent to:
\begin{itemize}
\item[(c)] there is an uncountable Mr\'{o}wka space in FinBW($\I$);
\item[(d)] there is an uncountable separable space in FinBW($\I$).
\end{itemize}
\end{theorem}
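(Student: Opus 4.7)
The theorem is a summary that ties together the structural results of the section and the Mr\'owka construction from the previous section. The plan is to establish the ZFC equivalence (a)$\Longleftrightarrow$(b) directly from the propositions just proved, and then close the four-way loop under CH by routing through (c) and (d) via the Mr\'owka construction and the separability characterization of boring spaces.

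For (a)$\implies$(b): I would invoke Proposition \ref{omega^2} to place $\omega^2+1$ with the order topology into FinBW($\I$). This space is clearly Hausdorff (as an ordinal with the order topology), and it is unboring: the points $n\cdot\omega$ for $n\geq 1$ together with $\omega^2$ form infinitely many non-isolated points, so by Proposition \ref{boringspaces}(ii) it cannot be a disjoint union of \emph{finitely many} one-point compactifications of discrete spaces.

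For (b)$\implies$(a): I would argue by contraposition. Assuming $\I$ is boring, Proposition \ref{f} says that every Hausdorff space in FinBW($\I$) is boring, so no Hausdorff unboring space can belong to FinBW($\I$).

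Under CH the plan is to close a cycle (a)$\implies$(c)$\implies$(d)$\implies$(b). The implication (a)$\implies$(c) is exactly the content of Theorem \ref{Mrowka}. For (c)$\implies$(d), note that any Mr\'owka space $\Phi(\mathcal{A})$ is separable (since $\omega$ is dense in it by the definition of the topology), so an uncountable Mr\'owka space in FinBW($\I$) supplies an uncountable separable space in FinBW($\I$). For (d)$\implies$(b), suppose $X\in\text{FinBW}(\I)$ is uncountable and separable. Since $X$ is Hausdorff by the definition of FinBW($\I$), it suffices to show $X$ is unboring; but a boring separable space is countable by Proposition \ref{boringspaces}(iv), so $X$ must be unboring, giving (b).

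There is no serious obstacle here, since all the technical content has been developed earlier: the delicate points were Theorem \ref{Mrowka} (the CH construction of the AD family) and Propositions \ref{f} and \ref{omega^2} (the rigidity of FinBW under boring ideals and the insertion of $\omega^2+1$). The only thing to be mildly careful about is verifying that $\omega^2+1$ is unboring, which reduces to counting non-isolated points via Proposition \ref{boringspaces}(ii), and that Mr\'owka spaces are separable, which is immediate from their definition in Subsection \ref{secMrowka}.
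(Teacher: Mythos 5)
Your proof is correct and takes essentially the same approach as the paper: (a)$\iff$(b) via Propositions~\ref{omega^2} and~\ref{f}, (a)$\implies$(c) via Theorem~\ref{Mrowka}, and (c)$\implies$(d) by separability of Mr\'owka spaces. The only cosmetic difference is that you close the CH cycle with (d)$\implies$(b) using Proposition~\ref{boringspaces}(iv) directly, whereas the paper invokes Corollary~\ref{c} for (d)$\implies$(a); both rest on the same ingredients, and your added remark that $\omega^2+1$ is unboring (via infinitely many non-isolated points) is a useful explicit detail the paper leaves implicit.
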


\begin{proof}
(a)$\implies$(b): This is Proposition \ref{omega^2}. 

(b)$\implies$(a): This is Proposition \ref{f}.

(a)$\implies$(c): Under CH, this is Theorem \ref{Mrowka}. 

(c)$\implies$(d): Every Mr\'{o}wka space is separable.

(d)$\implies$(a): This is Corollary \ref{c}. 
\end{proof}

Since the implication (d)$\implies$(a) in Theorem \ref{char} is true in ZFC, we are able to state the following result.

\begin{corollary}
\label{absolute}
Let $\I$ be an ideal.
\begin{itemize}
\item[(a)] If $\I$ is Borel, then the following are equivalent:
\begin{itemize}
\item[(i)] $\I$ is unboring in some forcing extension;
\item[(ii)] $\I$ is unboring in every forcing extension;
\item[(iii)] in every forcing extension in which CH holds, there is an uncountable separable space in FinBW($\I$);
\item[(iv)] there is a forcing extension in which there is an uncountable separable space in FinBW($\I$).
\end{itemize}
\item[(b)] If either $\I$ being unboring holds in ZFC or $\I$ being boring holds in ZFC, then the following are equivalent:
\begin{itemize}
\item[(i)] $\I$ is unboring;
\item[(ii)] under CH there is an uncountable separable space in FinBW($\I$);
\item[(iii)] it is consistent that there is an uncountable separable space in FinBW($\I$).
\end{itemize}
\end{itemize}
\end{corollary}

\begin{proof}
(a): Since $\I$ and $\BI$ are Borel, $\I$ being unboring is an absolute statement (by \cite[Proposition 1.3]{Hrusak}). Thus, (i)$\implies$(ii) by Shoenfield’s absoluteness theorem, (ii)$\implies$(iii) follows from Theorem \ref{char}, (iii)$\implies$(iv) is trivial and (iv)$\implies$(i) follows from Corollary \ref{c}. 

(b): The implication (i)$\implies$(ii) is true by Theorem \ref{char}, (ii)$\implies$(iii) is trivial and if (iii) holds then it is consistent that $\I$ is unboring (by Corollary \ref{c}), so using our assumption we get item (i).
\end{proof}

\begin{corollary}
\label{corollary-main}
Suppose that $\I$ is extendable to a coanalytic weak P-ideal (in particular, to a $\bf{\Pi^0_4}$ ideal). Then $\omega_1$ with the order topology is in FinBW($\I$). In particular, there is a non-compact space in FinBW($\I$). Moreover, if MA($\sigma$-centered) holds, then there is an uncountable separable Mr\'{o}wka space in FinBW($\I$).
\end{corollary}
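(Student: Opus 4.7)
My plan is to reduce the hypothesis to ``$\I$ is extendable to a coanalytic \emph{hereditary} weak P-ideal'' and then assemble the conclusion from Propositions \ref{omega_1} and \ref{r}(b) together with Theorem \ref{MAlem}. The reduction rests on showing that any coanalytic weak P-ideal $\J$ is automatically a hereditary weak P-ideal, which I would establish as a first step.

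For that reduction, fix $A \notin \J$; the target is $\fin^2 \not\sqsubseteq \J|A$. Since $\J|A$ is coanalytic (the inclusion $\cP(A) \hookrightarrow \cP(\bigcup\J)$ is continuous), Lemma \ref{Laflamme} reduces this to $\J|A$ being $\omega$-diagonalizable by $(\J|A)^\star$-universal sets. My plan is to derive such a family $\{\cZ_k' : k \in \omega\}$ from a witness $\{\cZ_k : k \in \omega\}$ for the $\omega$-diagonalizability of $\J$ (itself supplied by Lemma \ref{Laflamme} since $\fin^2 \not\sqsubseteq \J$), essentially via $\cZ_k' = \{Z \cap A : Z \in \cZ_k,\ Z \cap A \neq \emptyset\}$, and to verify the two defining conditions; as a backup route, I would use determinacy of the game $G(\J|A)$ and adapt Player II's winning strategy in $G(\J)$.

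With the reduction in hand, the rest is immediate. Let $\J \supseteq \I$ be the coanalytic hereditary weak P-ideal. For the first claim, Proposition \ref{omega_1} applied to $\J$ gives $\omega_1 \in \text{FinBW}(\J)$, and the monotonicity $\text{FinBW}(\J) \subseteq \text{FinBW}(\I)$ noted right after the definition of FinBW yields $\omega_1 \in \text{FinBW}(\I)$, which is the desired non-compact example. For the MA($\sigma$-centered) clause, Proposition \ref{r}(b) shows that $\I$ is strongly unboring, and Theorem \ref{MAlem} then produces an uncountable separable Mr\'owka space in $\text{FinBW}(\I)$.

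The main obstacle is the reduction step. The naive definition of $\cZ_k'$ requires, for each $D \in \J|A$, a $Z \in \cZ_k$ disjoint from $D$ that also meets $A$, which one would like to extract by applying the defining property of $\cZ_k$ to the enlargement $D \cup (\bigcup\J \setminus A)$; this enlargement lies in $\J$ only when $\bigcup\J \setminus A \in \J$, so the case where $\bigcup\J \setminus A$ is $\J$-positive demands a finer argument --- for instance partitioning $\bigcup\J \setminus A$ into $\J$-sets and using the weak P-ideal property of $\J$ on the combined partition, or the strategy-transfer argument based on determinacy of $G(\J|A)$ mentioned above.
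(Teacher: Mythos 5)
Your reduction step --- that every coanalytic weak P-ideal is automatically a hereditary weak P-ideal --- is false, and the paper itself contains a counterexample. The ideal $\conv$ is Borel (hence coanalytic) and unboring (Proposition \ref{conv}), hence a weak P-ideal by Proposition \ref{boringprop}(a); but the proof of Proposition \ref{boringprop}(b) exhibits a set $A=\{x_{n,k}:n,k\in\omega\}$ with $(x_{n,k})_k\subseteq\mathbb{Q}\cap(\frac{1}{n+2},\frac{1}{n+1})$ converging to $\frac{1}{n+2}$, for which $A\notin\conv$ and $\fin^2\sqsubseteq\conv|A$, i.e., $\conv$ is \emph{not} a hereditary weak P-ideal. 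The obstacle you flag in your last paragraph is therefore not one a ``finer argument'' can remove: for this $A$ the set $(\mathbb{Q}\cap[0,1])\setminus A$ is $\conv$-positive, the restricted ideal $\conv|A\cong\fin^2$ is not $\omega$-diagonalizable by $(\conv|A)^\star$-universal sets (cf.\ the discussion in the proof of Proposition \ref{r}(b)), Player I wins $G(\conv|A)$, and no transfer of Player II's strategy from $G(\conv)$ can reverse this.

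What the paper actually does is operate with the hypothesis of Proposition \ref{r}(b), namely extendability to a coanalytic \emph{hereditary} weak P-ideal; this is also exactly what the parenthetical ``(in particular, to a $\bf{\Pi^0_4}$ ideal)'' produces, via Proposition \ref{all-cor}. Read with ``hereditary'' in the hypothesis, your ``immediate'' second half --- applying Proposition \ref{omega_1} to the hereditary weak P-ideal extension $\J$, using the monotonicity $\text{FinBW}(\J)\subseteq\text{FinBW}(\I)$, and then Proposition \ref{r}(b) plus Theorem \ref{MAlem} for the MA($\sigma$-centered) clause --- is correct and coincides with the paper's cited chain of references. The reduction you wanted to insert in front of it is both unnecessary (given that reading) and unprovable.
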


\begin{proof}
Follows from Propositions \ref{all-cor}, \ref{r}(b), \ref{omega_1} and Theorem \ref{MAlem}.
\end{proof}

\begin{corollary}
\label{m1}
Let $\I$ be a $\leq_K$-uniform ideal. Then the following are equivalent:
\begin{itemize}
\item[(a)] $\I$ is a weak P-ideal;
\item[(b)] there is a Hausdorff unboring space in FinBW($\I$);
\item[(c)] $\omega_1$ with order topology is in FinBW($\I$);
\item[(d)] there is a non-compact space in FinBW($\I$).
\end{itemize}
If additionally CH holds, then the above are equivalent to:
\begin{itemize}
\item[(e)] there is an uncountable Mr\'{o}wka space in FinBW($\I$);
\item[(f)] there is an uncountable separable space in FinBW($\I$).
\end{itemize}
If $\I$ is coanalytic then in the above we can replace CH with MA($\sigma$-centered).
\end{corollary}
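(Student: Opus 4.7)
The plan is to arrange the listed properties into a short cycle of implications, with every implication already supplied by an earlier result, so the main task is bookkeeping rather than new arguments. The one organizing observation is Corollary \ref{K-uniform}: since $\I$ is $\leq_K$-uniform, condition (a) ($\I$ is a weak P-ideal) is already equivalent to $\I$ being unboring and to $\I$ being a hereditary weak P-ideal. This collapse is what makes the ``hereditary'' hypotheses of Propositions \ref{omega_1} and \ref{f} available from the mere weak P-ideal assumption.

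For the ZFC block I would run the cycle \text{(a)}$\Rightarrow$\text{(c)}$\Rightarrow$\text{(d)}$\Rightarrow$\text{(b)}$\Rightarrow$\text{(a)}. The implication (a)$\Rightarrow$(c) is immediate from Proposition \ref{omega_1} once Corollary \ref{K-uniform} is used to upgrade (a) to ``hereditary weak P-ideal''. The implication (c)$\Rightarrow$(d) is trivial since $\omega_1$ with the order topology is not compact. For (d)$\Rightarrow$(b), I note that every space in FinBW($\I$) is Hausdorff by definition, and any boring space is compact by Proposition \ref{boringspaces}(iii); hence a non-compact space in FinBW($\I$) is automatically a Hausdorff unboring space in FinBW($\I$). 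Finally (b)$\Rightarrow$(a) is the content of Theorem \ref{char} combined again with Corollary \ref{K-uniform} (the existence of a Hausdorff unboring space in FinBW($\I$) forces $\I$ to be unboring, hence a weak P-ideal).

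Under CH I add (e) and (f). The direction (a)$\Rightarrow$(e) is Theorem \ref{char}(c) (equivalently Theorem \ref{Mrowka}) applied to the unboring ideal $\I$. The step (e)$\Rightarrow$(f) is a triviality because every Mr\'owka space is separable. For (f)$\Rightarrow$(a), I invoke Corollary \ref{c}: if $\I$ were boring then no space in FinBW($\I$) could be uncountable and separable; contrapositively, (f) forces $\I$ to be unboring, and then Corollary \ref{K-uniform} once more gives that $\I$ is a weak P-ideal.

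Finally, for the coanalytic version with CH replaced by MA($\sigma$-centered), only the implication (a)$\Rightarrow$(e) needs a new argument; all others are absolute. Assuming $\I$ is coanalytic, $\leq_K$-uniform and a weak P-ideal, Corollary \ref{K-uniform} tells me that $\I$ is a hereditary weak P-ideal, so $\I$ is itself a coanalytic hereditary weak P-ideal containing itself; Proposition \ref{r}(b) then yields that $\I$ is strongly unboring, and Theorem \ref{MAlem} produces, under MA($\sigma$-centered), an uncountable separable Mr\'owka space in FinBW($\I$). There is no real obstacle in the proof; the only point that needs mild care is not forgetting the $\leq_K$-uniformity when passing between ``weak P-ideal'' and ``hereditary weak P-ideal'', since without it Proposition \ref{boringprop}(b) shows the two properties genuinely differ.
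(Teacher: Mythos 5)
Your proof is correct and essentially matches the paper's argument: both rely on Corollary \ref{K-uniform} to identify ``weak P-ideal'' with ``unboring'' and ``hereditary weak P-ideal'' under the $\leq_K$-uniformity hypothesis, and then dispatch the equivalences via Theorem \ref{char}, Proposition \ref{omega_1}, Corollary \ref{c}, and (for the MA version) Proposition \ref{r}(b) with Theorem \ref{MAlem}. The only cosmetic difference is that you close the ZFC block with the cycle (a)$\Rightarrow$(c)$\Rightarrow$(d)$\Rightarrow$(b)$\Rightarrow$(a), using Proposition \ref{boringspaces}(iii) to get (d)$\Rightarrow$(b), whereas the paper treats (a)$\Leftrightarrow$(b) separately via Theorem \ref{char} and routes (d) back to (a) through Corollary \ref{c}; both are sound and use the same underlying facts.
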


\begin{proof}
As $\I$ is $\leq_K$-uniform, $\I$ is a weak P-ideal if and only if $\I$ is unboring (by Corollary \ref{K-uniform}). This together with Theorem \ref{char} shows the equivalence of (a), (b), (e) and (f) (under CH). Recall that each unboring $\leq_K$-uniform ideal is a hereditary weak P-ideal (again by Corollary \ref{K-uniform}). Thus, Proposition \ref{omega_1} gives us the implication (a)$\implies$(c). The implication (c)$\implies$(d) is obvious and (d)$\implies$(a) follows from Corollary \ref{c}.

If $\I$ is coanalytic, then (a)$\implies$(e) under MA($\sigma$-centered) follows from Corollaries \ref{K-uniform} and \ref{corollary-main}. Finally, the implication (e)$\implies$(f) is obvious (as every Mr\'{o}wka space is separable) and (f)$\implies$(a) is proved in Corollary \ref{c}.
\end{proof}

\begin{corollary}
\label{maximal}
Suppose that $\cU$ is an ultrafilter.
\begin{itemize}
\item If $\cU$ is not a P-point then FinBW($\cU^\star$) coincides with finite spaces.
\item If $\cU$ is a P-point then the $[0,1]$ interval and $\omega_1$ with the order topology are in FinBW($\cU^\star$). In particular, in FinBW($\cU^\star$) there is an uncountable separable space as well as a non-compact space.
\end{itemize}
\end{corollary}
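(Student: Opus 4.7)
The plan is to assemble this corollary directly from the characterizations of P-points given in Proposition \ref{P-point} together with the main extreme-case theorems already proved.

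For the first clause, suppose $\cU$ is not a P-point. By Proposition \ref{P-point}, this is equivalent to $\fin^2 \sqsubseteq \cU^\star$. I would then invoke Theorem \ref{extreme}(a), which says that $\fin^2 \sqsubseteq \I$ characterizes the situation in which FinBW($\I$) contains only finite spaces. Applied to $\I = \cU^\star$ this yields the first bullet.

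For the second clause, suppose $\cU$ is a P-point. For the $[0,1]$ statement I would use Proposition \ref{P-point} in the form $\conv \not\sqsubseteq \cU^\star$, and then Proposition \ref{o}(a), which says $[0,1]\in\text{FinBW}(\I)$ if and only if $\conv\not\sqsubseteq\I$. For the $\omega_1$ statement, the route is slightly longer: first recall from Example \ref{uniform-ex} that every maximal ideal is homogeneous, hence $\cU^\star$ is $\leq_K$-uniform. Next, by Proposition \ref{P-point} the P-point assumption also gives $\fin^2\not\sqsubseteq\cU^\star$, i.e., $\cU^\star$ is a weak P-ideal. Corollary \ref{K-uniform} then upgrades "weak P-ideal" to "hereditary weak P-ideal" in the $\leq_K$-uniform case, and Proposition \ref{omega_1} concludes that $\omega_1$ with its order topology lies in FinBW($\cU^\star$).

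The "in particular" observations then just note that $[0,1]$ is an uncountable separable space and that $\omega_1$ with the order topology is sequentially compact but not compact. There is no real obstacle here; the entire corollary is a packaging of Proposition \ref{P-point} with the machinery (Theorem \ref{extreme}, Proposition \ref{o}, Example \ref{uniform-ex}, Corollary \ref{K-uniform}, Proposition \ref{omega_1}) already developed. Alternatively, one could shorten the P-point case by quoting Corollary \ref{m1} directly, since $\cU^\star$ is $\leq_K$-uniform and, by Proposition \ref{P-point}, a weak P-ideal.
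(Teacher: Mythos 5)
Your proposal is correct and matches the paper's argument in substance: the paper also derives the first clause from Proposition \ref{P-point} together with Theorem \ref{extreme}, and the second clause from Proposition \ref{P-point} (for $[0,1]$) plus the observation that maximal ideals are $\leq_K$-uniform, after which it cites Corollary \ref{m1} to place $\omega_1$ in FinBW($\cU^\star$). Your main route simply unfolds Corollary \ref{m1} into its constituent steps (Corollary \ref{K-uniform} followed by Proposition \ref{omega_1}), which is exactly what the proof of Corollary \ref{m1} does internally, and your stated alternative of citing Corollary \ref{m1} directly is precisely the paper's phrasing.
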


\begin{proof}
By Proposition \ref{P-point}, the following are equivalent:
\begin{itemize}
\item $\cU$ is a P-point;
\item the $[0,1]$ interval is in FinBW($\cU^\star$);
\item $\cU^\star$ is a weak P-ideal
\end{itemize}
Thus, the first statement is a consequence of Theorem \ref{extreme}(a). For the second statement, recall that each maximal ideal is $\leq_K$-uniform (see Example \ref{uniform-ex}). Thus, the thesis follows from Theorem \ref{extreme} and Corollary \ref{m1}. 
\end{proof}

\section{Some (open) problems}

We start this section with the following question posed by D. Meza-Alc\'antara in \cite[Question 4.4.7]{Meza}: Is it true that, if $\I$ is a Borel ideal then either $\fin^2\leq_K\I$ or there is a $\bf{\Pi^0_3}$ ideal containing $\I$? This problem has been repeated by M. Hru\v{s}\'ak in \cite[Question 5.18]{Hrusak} and recently in \cite[Question 5.10]{Hrusak2}. In the next example we answer it in the negative. 

\begin{example}
Consider the ideal $\BI$. It is Borel (by Remark \ref{rem-Borel}), $\fin^2\not\leq_K\BI$ (by Propositions \ref{propertyKat} and \ref{conv}), but $\BI$ cannot be extended to any $\bf{\Pi^0_3}$ ideal. In fact, $\BI$ even cannot be extended to any $\bf{\Pi^0_4}$ ideal, as $\BI$ is boring and each ideal extendable to a $\bf{\Pi^0_4}$ ideal cannot be boring (by Propositions \ref{all-cor} and \ref{boringprop}(c)).
\end{example}

The above example shows also that $\fin^2$ is not critical (in the sense of $\leq_K$) for extendability to ideals of any Borel class (among Borel ideals): if there is a critical ideal for extendability to $\bf{\Pi^0_4}$ ideals, then it must be $\leq_K$-below $\BI$ (by the above example) and $\fin^2$ cannot be critical for extendability to $\bf{\Sigma^0_4}$ ideals as $\fin^2$ is itself $\bf{\Sigma^0_4}$.

However, there is still a chance that extendability to some Borel class is related to containing an isomorphic copy of $\fin^2$:

\begin{problem}
\label{question}
Is every Borel hereditary weak P-ideal extendable to a $\bf{\Pi^0_4}$ ideal?
\end{problem}

Note that the converse implication is false: $\I=\fin\oplus\fin^2$ is a Borel ideal extendable to a $\bf{\Sigma^0_2}$ ideal $\{A\subseteq(\{0\}\times\omega)\cup(\{1\}\times\omega^2):\ A\cap(\{0\}\times\omega)\text{ is finite}\}$, but it is not a hereditary weak P-ideal (as $\I|(\{1\}\times\omega^2)\cong\fin^2$).

An equivalent formulation of the above question is the following: is it true that for each Borel ideal $\I$ the following are equivalent:
\begin{itemize}
\item[(a)] $\I$ is extendable to a hereditary weak P-ideal;
\item[(b)] $\I$ is extendable to a $\bf{\Pi^0_4}$ ideal?
\end{itemize}

Observe that the implication (b)$\implies$(a) is true by Proposition \ref{all-cor}. 

Next example shows that in Question \ref{question} we cannot omit the assumption that the ideal is Borel. 

\begin{example}
If there are P-points (in particular, if MA($\sigma$-centered) holds) then there are hereditary weak P-ideals which are not extendable to a $\bf{\Pi^0_4}$ ideal. Indeed, let $\mathcal{U}$ be a P-point and denote $\I=\mathcal{U}^\star$. By Proposition \ref{P-point}, $\fin^2\not\sqsubseteq\I$. Since $\I$ is a maximal ideal, it is homogeneous (see Example \ref{uniform-ex}). Thus, $\I$ is a hereditary weak P-ideal. On the other hand, since $\I$ is maximal, the only ideal containing $\I$ is $\I$ itself. As no maximal ideal is Borel, $\I$ cannot be extended to a $\bf{\Pi^0_4}$ ideal.
\end{example}

By Proposition \ref{r}(d), under MA($\sigma$-centered) there are unboring ideals which are not strongly unboring. However, a positive answer to the following problem would mean that in Theorem \ref{char} we can replace CH with MA($\sigma$-centered) in the case of all Borel ideals (by Theorem \ref{MAlem}).

\begin{problem}
Is every Borel unboring ideal strongly unboring?
\end{problem}

We end this section with some comments about P-points under MA($\sigma$-centered). By Corollary \ref{maximal}, for each P-point $\cU$ there is an uncountable separable space in FinBW($\cU^\star$) (namely, the $[0,1]$ interval). However, by Proposition \ref{r}(d), if $\cU$ is a P-point then $\cU^\star$ is not strongly unboring. Thus, Theorem \ref{MAlem} does not give us an uncountable Mr\'{o}wka space in FinBW($\cU^\star$). 

\begin{problem}
Assume MA($\sigma$-centered). Is there a P-point $\cU$ such that there is no uncountable Mr\'{o}wka space in FinBW($\cU^\star$)?
\end{problem}

\section{New preorder on ideals}

\begin{definition}
If $\I$ is an ideal and $(G_{i,j})\subseteq\I$ is a partition of $\bigcup\I$ then $\hat{\I}(G_{i,j})$ is an ideal on $\omega^2$ consisting of all $A\subseteq\omega^2$ with:
$$\left(\forall X\subseteq\bigcup_{(i,j)\in A}G_{i,j}\right)\ \left(\left(\left(\forall (i,j)\in A\right)\ X\cap G_{i,j}\text{ is finite}\right)\implies X\in\I\right).$$
\end{definition}

\begin{definition}
Let $\I$ and $\J$ be ideals. We write $\J\preceq\I$ if for each partition $(H_{i,j})\subseteq\J$ with $\fin\otimes\emptyset\subseteq\hat{\J}(H_{i,j})$ there is a partition $(G_{i,j})\subseteq\I$ such that $\hat{\J}(H_{i,j})\cap\fin^2\subseteq\hat{\I}(G_{i,j})$.
\end{definition}

One could think that in the above definition the condition $\fin\otimes\emptyset\subseteq\hat{\J}(H_{i,j})$ is unnecessary or that the strange condition $\hat{\J}(H_{i,j})\cap\fin^2\subseteq\hat{\I}(G_{i,j})$ could be replaced by a simpler one: $\hat{\J}(H_{i,j})\subseteq\hat{\I}(G_{i,j})$. As we will see in Proposition \ref{prec_2}, the relation $\preceq$ is designed to have the following property: $\J\preceq\I$ implies $\text{FinBW}(\I)\subseteq\text{FinBW}(\J)$, whenever $\J$ is a weak P-ideal. The mentioned above two modifications of $\preceq$ would also have the required property, but $\preceq$ is the weakest one among them and we wanted this new relation to be as accurate for this purpose as it can be.

\begin{proposition}
The relation $\preceq$ is a preorder on the set of ideals.
\end{proposition}

\begin{proof}
It is clear that $\preceq$ is reflexive, so we only show transitivity. Suppose that $\I\preceq\I'$ and $\I'\preceq\I''$. Fix any partition $(H_{i,j})\subseteq\I$ with $\fin\otimes\emptyset\subseteq\hat{\I}(H_{i,j})$. Then there is a partition $(H'_{i,j})\subseteq\I'$ such that $\hat{\I}(H_{i,j})\cap\fin^2\subseteq\hat{\I'}(H'_{i,j})$. Since $\fin\otimes\emptyset\subseteq\hat{\I}(H_{i,j})\cap\fin^2\subseteq\hat{\I'}(H'_{i,j})$, there is a partition $(H''_{i,j})\subseteq\I''$ such that $\hat{\I'}(H'_{i,j})\cap\fin^2\subseteq\hat{\I''}(H''_{i,j})$. Then we have $\hat{\I}(H_{i,j})\cap\fin^2\subseteq\hat{\I'}(H'_{i,j})\cap\fin^2\subseteq\hat{\I''}(H''_{i,j})$.
\end{proof}

\begin{proposition}
\label{p1}
Let $\I$ and $\J$ be ideals.
\begin{itemize}
\item[(a)] The following are equivalent:
\begin{itemize}
\item[(i)] $\I$ is boring;
\item[(ii)] $\fin^2\preceq\I$;
\item[(iii)] $\J\preceq\I$ for all ideals $\J$.
\end{itemize}
\item[(b)] If $\J\leq_K\I$ then $\J\preceq\I$.
\item[(c)] Suppose that $\I$ is unboring. If $\J\preceq\I$ then $\J\leq_K\I|A$ for some $A\notin\I$.
\end{itemize}
\end{proposition}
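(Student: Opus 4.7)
For part (a), the implication (iii)$\Rightarrow$(ii) is immediate by setting $\J = \fin^2$. For (ii)$\Rightarrow$(i), the plan is to apply $\fin^2 \preceq \I$ to the singleton partition $H_{i,j} := \{(i,j)\}$ of $\omega^2$, for which $\hat{\fin^2}(H_{i,j}) = \fin^2$ (since $\fin^2$ is hereditary) and the hypothesis $\fin \otimes \emptyset \subseteq \fin^2$ is automatic. This yields a partition $(G_{i,j}) \subseteq \I$ with $\fin^2 \subseteq \hat{\I}(G_{i,j})$; enumerating each $G_{i,j}$ as $\{g^{i,j}_k\}$ and setting $f(g^{i,j}_k) := (i,j,k)$ gives $f : \bigcup\I \to \omega^3$. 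For $B \in \BI$ with threshold $\ell$, the set $C'' := \{(i,j) : B_{(i,j)} \text{ infinite}\}$ is forced finite (it sits in $\{0,\ldots,\ell-1\} \times \omega$ with finitely many $j$'s per row by $B_{(i)} \in \fin^2$), while $C' := \{(i,j) : B_{(i,j)} \neq \emptyset\}$ lies in $\fin^2$ (using $B_{(i)}$ finite for $i \geq \ell$); splitting $f^{-1}[B]$ along $C''$ exhibits it as a finite union of $G_{i,j} \in \I$ plus a subset of $\bigcup_{(i,j) \in C'\setminus C''} G_{i,j}$ with finite fibres, which lies in $\I$ via $C'\setminus C'' \in \fin^2 \subseteq \hat{\I}(G_{i,j})$, so $\BI \leq_K \I$. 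For (i)$\Rightarrow$(iii), given a bijection $\phi : \bigcup\I \to \omega^3$ witnessing $\BI \sqsubseteq \I$, set $G_{i,j} := \phi^{-1}[\{(i,j)\} \times \omega]$; each $G_{i,j} \in \I$ since $\{(i,j)\} \times \omega \in \BI$, and for $A \in \fin^2$ and $X$ with finite fibres $X \cap G_{i,j}$ over $(i,j) \in A$, the image $\phi[X]$ has $\fin^2$-rows for the finitely many $i$ with $A_{(i)}$ infinite and finite rows otherwise, so $\phi[X] \in \BI$ and $X \in \I$. This gives $\fin^2 \subseteq \hat{\I}(G_{i,j})$, hence $\hat{\J}(H_{i,j}) \cap \fin^2 \subseteq \hat{\I}(G_{i,j})$ for free, for any $\J$ and partition.

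Part (b) is a direct pullback: given $g : \bigcup\I \to \bigcup\J$ witnessing $\J \leq_K \I$ and any partition $(H_{i,j}) \subseteq \J$ with the required hypothesis, take $G_{i,j} := g^{-1}[H_{i,j}] \in \I$. For $A \in \hat{\J}(H_{i,j}) \cap \fin^2$ and a witness $X$ with finite fibres $X \cap G_{i,j}$, the image $g[X]$ has finite intersections $g[X] \cap H_{i,j} = g[X \cap G_{i,j}]$, so $g[X] \in \J$ by $A \in \hat{\J}(H_{i,j})$, and $X \subseteq g^{-1}[g[X]] \in \I$.

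For part (c), the plan is to cook up a partition of $\bigcup\J$ that makes $\J \preceq \I$ useful, then exploit $\I$ unboring via Proposition \ref{q}(iv) to extract the Kat\v{e}tov reduction. Fix a bijective enumeration $\bigcup\J = \{h_i : i \in \omega\}$ and set $H_{i,0} := \{h_i\}$, $H_{i,j} := \emptyset$ for $j \geq 1$; then $\fin \otimes \emptyset \subseteq \hat{\J}(H_{i,j})$ holds because $\bigcup_{(i,j)\in A} H_{i,j}$ is finite whenever $A$ has finitely many nonempty columns. Applying $\J \preceq \I$ yields a partition $(G_{i,j}) \subseteq \I$ with $\hat{\J}(H_{i,j}) \cap \fin^2 \subseteq \hat{\I}(G_{i,j})$, and Proposition \ref{q}(iv) (for the unboring ideal $\I$ and the partition $(G_{i,j})$) yields $A \notin \I$ such that $A \cap G_{i,j}$ is finite for every $(i,j)$ and $C := \{(i,j) : A \cap G_{i,j} \neq \emptyset\} \in \fin^2$. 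The slice $\{(i,j) \in C : j \geq 1\}$ lies in $\hat{\J}(H_{i,j}) \cap \fin^2$ (its $H$-union is empty, hence trivially in $\J$), so by the containment, $X := A \cap \bigcup_{(i,j) \in C,\, j \geq 1} G_{i,j} \in \I$; setting $A^+ := A \setminus X \notin \I$, every $x \in A^+$ lies in a unique $G_{i,0}$ with $(i,0) \in C$, and I define $f : A^+ \to \bigcup\J$ by $f(x) := h_i$ for $x \in G_{i,0}$. For $B \in \J$, the set $D := \{(i,0) \in C : h_i \in B\} \subseteq C$ lies in $\fin^2$ and has $\bigcup_{(i,j)\in D} H_{i,j} \subseteq B \in \J$, so $D \in \hat{\J}(H_{i,j}) \cap \fin^2 \subseteq \hat{\I}(G_{i,j})$; since $f^{-1}[B] \subseteq \bigcup_{(i,j)\in D} G_{i,j}$ has finite fibres $A^+ \cap G_{i,0}$, this forces $f^{-1}[B] \in \I$, and $f$ witnesses $\J \leq_K \I|A^+$. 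The main technical obstacle is the asymmetry $|\bigcup\J| = \omega < |\omega^2|$: the partition $(H_{i,j})$ is forced to have many empty pieces, and one has to choose it so that the $\fin \otimes \emptyset$ hypothesis still holds, while the surviving pieces are rich enough to encode an arbitrary $B \in \J$. The saving grace is the observation that empty-$H$ positions lie tautologically in $\hat{\J}(H_{i,j})$, which is precisely what makes the cleanup of $A$ into $A^+$ work and lets the containment $\hat{\J}(H_{i,j}) \cap \fin^2 \subseteq \hat{\I}(G_{i,j})$ be applied both to kill the useless part of $A$ and to verify the Kat\v{e}tov condition on $f^{-1}[B]$.
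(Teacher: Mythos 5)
Your proof is correct and follows essentially the same approach as the paper: the same singleton-type partition $H_{i,0}=\{h_i\}$, $H_{i,j}=\emptyset$ in (c), the same pullback $G_{i,j}=g^{-1}[H_{i,j}]$ in (b), and the same reliance on Proposition~\ref{q}(iv) in (a) and (c). The only organizational differences are that in (a)(ii)$\Rightarrow$(i) and (i)$\Rightarrow$(iii) you build the witnessing function explicitly where the paper just cites Proposition~\ref{q}(iv), and in (c) you argue directly (cleaning up $A$ to $A^+$ and defining $f$) where the paper proves the contrapositive; these are cosmetic.
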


\begin{proof}
Without loss of generality we may assume that $\I$ and $\J$ are ideals on $\omega$.

(a): The implication (iii)$\implies$(ii) is obvious. If $\fin^2\preceq\I$ then for the partition given by $H_{i,j}=\{(i,j)\}\in\fin^2$, there is a partition $(G_{i,j})\subseteq\I$ such that $\fin^2\subseteq\hat{\I}(G_{i,j})$. Observe that this implies $\I$ being boring, by Proposition \ref{q}(iv). Indeed, $(G_{i,j})\subseteq\I$ is a partition such that given any $X\subseteq\omega$, if $X\cap G_{i,j}\in\fin$ for all $(i,j)\in\omega^2$ and $X\cap \bigcup_{j}G_{i,j}\in\fin$ for almost all $i\in\omega$, then $A=\{(i,j)\in\omega:\ X\cap G_{i,j}\neq\emptyset\}\in\fin^2$, so also $A\in\hat{\I}(G_{i,j})$ and, consequently, $X\in\I$ (as $X\cap G_{i,j}\in\fin$ for all $(i,j)\in\omega^2$). 

On the other hand, if $\I$ is boring then, by Proposition \ref{q}(iv), there is a partition $(G_{i,j})\subseteq\I$ such that for any $X\subseteq\omega$, if $X\cap G_{i,j}\in\fin$ for all $(i,j)\in\omega^2$ and $X\cap \bigcup_{j}G_{i,j}\in\fin$ for almost all $i\in\omega$, then $X\in\I$. Fix any $A\in\fin^2$. Then for every $X\subseteq\omega$ such that $X\subseteq\bigcup_{(i,j)\in A}G_{i,j}$ and $X\cap G_{i,j}\in\fin$ for all $(i,j)\in\omega^2$ we get $X\in\I$. Thus, $A\in \hat{\I}(G_{i,j})$. Thus, $\fin^2\subseteq\hat{\I}(G_{i,j})$ and $\J\preceq\I$ for every ideal $\J$.

(b): Let $f:\omega\to\omega$ witness $\J\leq_K\I$ and fix any partition $(H_{i,j})\subseteq\J$ of $\omega$ with $\fin\otimes\emptyset\subseteq\hat{\J}(H_{i,j})$. Define $G_{i,j}=f^{-1}[H_{i,j}]$ for all $(i,j)\in\omega^2$. Then $(G_{i,j})\subseteq\I$ is a partition of $\omega$. Let $A\in\hat{\J}(H_{i,j})\cap\fin^2$. We need to prove that $A\in\hat{\I}(G_{i,j})$. Let $X\subseteq\bigcup_{(i,j)\in A}G_{i,j}$ be such that $X\cap G_{i,j}\in\fin$ for all $(i,j)\in A$. Then $f[X]\subseteq\bigcup_{(i,j)\in A}H_{i,j}$ and $f[X]\cap H_{i,j}\in\fin$ for all $(i,j)\in A$. Since $A\in\hat{\J}(H_{i,j})$, $f[X]$ is in $\J$. Therefore, $X\subseteq f^{-1}[f[X]]\in\I$ and hence $A\in\hat{\I}(G_{i,j})$. 

(c): Suppose that $\I$ is unboring and $\J\not\leq_K\I|A$ for all $A\notin\I$. We will show that $\J\not\preceq\I$. Define $H_{i,0}=\{i\}$ and $H_{i,j}=\emptyset$ for all $i\in\omega$ and $j\in\omega\setminus\{0\}$. Clearly, $\fin\otimes\emptyset\subseteq\hat{\J}(H_{i,j})$. Fix any partition $(G_{i,j})\subseteq\I$. Since $\I$ is unboring, by Proposition \ref{q}(iv) there is an $X\notin\I$ with $X\cap G_{i,j}\in\fin$ for all $(i,j)\in\omega^2$ and $B=\{(i,j)\in\omega^2:\ X\cap G_{i,j}\neq\emptyset\}\in\fin^2$. Define $f:X\to\omega$ by 
$$f(x)=i\ \Longleftrightarrow\ x\in G_{i,j}$$
for all $x\in X$. As $\J\not\leq_K\I|X$, there is a $Y\in\J$ with $f^{-1}[Y]\notin\I$. Define $A=\{(i,j)\in \omega^2:\ i\in Y\}\cap B$. Note that $A\in\hat{\J}(H_{i,j})\cap\fin^2$ as $Y\in\J$ and $B\in\fin^2$. However, $A\notin\hat{\I}(G_{i,j})$, since $\I\not\ni f^{-1}[Y]\subseteq\bigcup_{(i,j)\in A} G_{i,j}$ and $f^{-1}[Y]\cap G_{i,j}\subseteq X\cap G_{i,j}\in\fin$ for all $(i,j)\in A$. 
\end{proof}

\begin{proposition}
\label{1<->2}
If $\I$ is a hereditary weak P-ideal, then $\J\preceq\I$ and $\J\leq_K\I$ are equivalent, for any ideal $\J$.
\end{proposition}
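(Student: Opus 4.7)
The implication $\J \leq_K \I \Rightarrow \J \preceq \I$ is already Proposition \ref{p1}(b) and needs no hypothesis on $\I$, so the plan is to establish the converse by contrapositive: assuming $\I$ is a hereditary weak P-ideal but $\J \not\leq_K \I$, I will exhibit a partition $(H_{i,j}) \subseteq \J$ of $\bigcup\J$ with $\fin \otimes \emptyset \subseteq \hat\J(H_{i,j})$ such that no partition $(G_{i,j}) \subseteq \I$ of $\bigcup\I$ satisfies $\hat\J(H_{i,j}) \cap \fin^2 \subseteq \hat\I(G_{i,j})$.

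My candidate $(H_{i,j})$ is the one appearing in the proof of Proposition \ref{p1}(c): assuming $\bigcup\J = \omega$, set $H_{i,0} = \{i\}$ and $H_{i,j} = \emptyset$ for $j \geq 1$, so that $A \in \hat\J(H_{i,j})$ reduces to $\{i : (i,0) \in A\} \in \J$ and the requirement $\fin \otimes \emptyset \subseteq \hat\J(H_{i,j})$ is automatic. I then fix an arbitrary partition $(G_{i,j}) \subseteq \I$ of $\bigcup\I$ and split into two cases according to whether every ``row'' $R_i := \bigcup_j G_{i,j}$ belongs to $\I$.

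If some $R_{i_0} \notin \I$, I invoke hereditary weak P-ideality (so $\I|R_{i_0}$ is a weak P-ideal) to pick a transversal $X_0 \notin \I$ of the partition $(G_{i_0,j})_j$ with $X_0 \cap G_{i_0,j}$ finite for every $j$; then $A_0 = \{i_0\} \times \{j : X_0 \cap G_{i_0,j} \neq \emptyset\}$ lies in $\hat\J(H_{i,j}) \cap \fin^2$ (only one column can be infinite, and the projection onto column $0$ is contained in $\{i_0\}$), while $X_0$ itself witnesses $A_0 \notin \hat\I(G_{i,j})$. Otherwise every $R_i$ is in $\I$, so I define $g : \bigcup\I \to \omega$ by declaring $g \equiv i$ on $R_i$; since $\J \not\leq_K \I$ the function $g$ cannot witness $\J \leq_K \I$, producing some $B \in \J$ with $D := g^{-1}[B] \notin \I$. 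A second application of hereditary weak P-ideality, now to $\I|D$ and the partition $(R_i)_{i \in B}$ of $D$, yields $S \subseteq D$, $S \notin \I$, with $S \cap R_i$ finite for every $i \in B$. Then $A = \{(i,j) \in B \times \omega : S \cap G_{i,j} \neq \emptyset\}$ has only finite columns (hence $A \in \fin^2$), satisfies $\{i : (i,0) \in A\} \subseteq B \in \J$ (hence $A \in \hat\J(H_{i,j})$), and is refuted as a member of $\hat\I(G_{i,j})$ by $S$.

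The only verification that takes any care is that $A$ (or $A_0$) really lies in $\fin^2$, but this reduces immediately to the fact that a finite-to-one transversal meets each $G_{i,j}$ in a finite set, forcing every column of $A$ to be finite. Once both cases are assembled, no $(G_{i,j})$ can work against this particular $(H_{i,j})$, so $\J \not\preceq \I$, completing the contrapositive and hence the equivalence.
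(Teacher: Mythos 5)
Your proof is correct. It takes the contrapositive route where the paper proves the implication directly, and this reorganization forces a case split that the paper sidesteps, but the combinatorial heart is the same: both proofs use the partition $H_{i,0}=\{i\}$, $H_{i,j}=\emptyset$ for $j\geq 1$, and both read off a candidate Kat\v{e}tov map $g$ (the paper's $f$) from the rows $R_i=\bigcup_j G_{i,j}$.

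The genuine difference is in which form of the hereditary hypothesis gets invoked. The paper takes the $(G_{i,j})$ handed to it by $\J\preceq\I$, builds $f$, and for an $A\in\J$ with $B=f^{-1}[A]\notin\I$ applies Proposition \ref{q}(iv) once to $\I|B$ (via Proposition \ref{hh} and the fact that $\I$ is hereditary unboring) to get a positive $C\subseteq B$ that simultaneously has finite trace on each $G_{i,j}$ \emph{and} has $\{(i,j):C\cap G_{i,j}\neq\emptyset\}\in\fin^2$; this single application handles both the case where some $R_{i_0}\notin\I$ (then $A=\{i_0\}$ already gives the contradiction) and the case where all rows are in $\I$. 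You instead use only the weaker property $\fin^2\not\sqsubseteq\I|\cdot$ (hereditary weak P-ideal in its raw form), but apply it twice: once to the column partition $(G_{i_0,j})_j$ of a positive row $R_{i_0}$, and once to the row partition $(R_i)_{i\in B}$ of the positive set $D$. In the second case the membership of your witness $A$ in $\fin^2$ (in fact in $\emptyset\otimes\fin$) is automatic from $S\cap R_i$ being finite, so you never need the $\mathcal{BI}$-flavored statement. Your version is a bit longer but more elementary, in that it bypasses Propositions \ref{hh} and \ref{q}(iv) entirely and appeals only to the defining property of a hereditary weak P-ideal; the paper's version is shorter because the one-shot application of Proposition \ref{q}(iv) absorbs the case distinction. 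One small presentational slip: when you say ``only one column can be infinite'' for $A_0\subseteq\{i_0\}\times\omega$, you mean that only the single row $i_0$ can be infinite, i.e.\ $A_0\in\fin\otimes\emptyset\subseteq\fin^2$; the verification is right, just the row/column wording is transposed.
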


\begin{proof}
By Proposition \ref{p1}(b), $\J\leq_K\I$ implies $\J\preceq\I$.

Assume now that $\I$ is a hereditary weak P-ideal such that $\J\preceq\I$. Without loss of generality we may assume that $\I$ and $\J$ are ideals on $\omega$. Define $H_{i,0}=\{i\}$ and $H_{i,j}=\emptyset$ for all $i\in\omega$ and $j\in\omega\setminus\{0\}$. Obviously, $\fin\otimes\emptyset\subseteq\hat{\J}(H_{i,j})$. Since $\J\preceq\I$, there is a partition $(G_{i,j})\subseteq\I$ such that $\hat{\J}(H_{i,j})\cap\fin^2\subseteq\hat{\I}(G_{i,j})$. 

Define $f:\omega\to\omega$ by
$$f(x)=i\ \Longleftrightarrow\ x\in G_{i,j}$$
for all $x\in \omega$. We claim that $f$ witnesses $\J\leq_K\I$. Indeed, fix any $A\in\J$. Notice that $B=f^{-1}[A]=\bigcup\{G_{i,j}:\ i\in A\}$. Suppose to the contrary that $B\notin\I$. Since $\I$ is a hereditary weak P-ideal, $\I|B$ is unboring (by Proposition \ref{hh}), so there is a $C\notin\I|B$, $C\subseteq B$ such that $C\cap G_{i,j}\in\fin$, for all $(i,j)\in\omega^2$, and $D=\{(i,j)\in\omega^2:\ C\cap G_{i,j}\neq\emptyset\}\in\fin^2$ (by Proposition \ref{q}(iv)). Observe that $D\notin \hat{\I}(G_{i,j})$ (as $C\notin\I$). However,  
$$\{i\in\omega:\ (i,j)\in D\text{ for some }j\in\omega\}=\{i\in\omega:\ C\cap G_{i,j}\neq\emptyset\text{ for some }j\in\omega\}$$
$$\subseteq\{i\in\omega:\ B\cap G_{i,j}\neq\emptyset\text{ for some }j\in\omega\}=A\in\J,$$
since $C\subseteq B$. Therefore, $D\in\hat{\J}(H_{i,j})$, which contradicts the choice of $(G_{i,j})$ and finishes the proof.
\end{proof}

A natural question is whether $\leq_K$ and $\prec$ coincide for all ideals. The remaining part of this section is devoted to that problem. It occurs that the answer is negative even for $\bf{\Sigma^0_4}$ ideals. We need to prove three lemmas before providing a suitable example.

\begin{lemma}
\label{l1}
Let $\J$ be a hereditary weak P-ideal on $\omega$. Then the ideal $\fin^2\cap(\J\otimes\emptyset)$ is unboring.
\end{lemma}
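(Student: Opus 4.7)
The plan is to verify condition (v) of Proposition \ref{q} for $\I := \fin^2 \cap (\J \otimes \emptyset)$. Fix $f : \omega^2 \to \omega^2$. If there is $(i,j) \in \omega^2$ with $f^{-1}[\{(i,j)\}] \notin \I$, set $A := f^{-1}[\{(i,j)\}]$: then $f|A$ is constant and $f[A] = \{(i,j)\} \in \fin^2$, as required. From now on I assume every fiber $f^{-1}[\{(i,j)\}]$ lies in $\I$; since $\I \subseteq \J \otimes \emptyset$, this gives $\mathrm{supp}(f^{-1}[\{(i,j)\}]) \in \J$ for all $(i,j)$.

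The key step is to pass to the zeroth slice: define $\psi : \omega \to \omega^2$ by $\psi(n) := f(n, 0)$. Then $\psi^{-1}(\{(i,j)\}) \subseteq \mathrm{supp}(f^{-1}[\{(i,j)\}]) \in \J$ for every $(i,j) \in \omega^2$. Because $\J$ is a hereditary weak P-ideal, $\J$ is unboring by Proposition \ref{boringprop}(a); condition (v) of Proposition \ref{q} applied to $\J$ and $\psi$ therefore yields $N \notin \J$ with $\psi[N] \in \fin^2$ and $\psi|N$ either constant or finite-to-one. Constancy on $N$ at some $(i^*, j^*)$ would force $N \subseteq \psi^{-1}(\{(i^*, j^*)\}) \in \J$, contradicting $N \notin \J$; hence $\psi|N$ is finite-to-one.

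Set $A := \{(n, 0) : n \in N\}$. Each column of $A$ has at most one point, so $A \in \fin^2$; meanwhile $\mathrm{supp}(A) = N \notin \J$, so $A \notin \J \otimes \emptyset$, and combining the two gives $A \notin \I$. Since $f(n, 0) = \psi(n)$, we have $f[A] = \psi[N] \in \fin^2$, and $A \cap f^{-1}[\{(i,j)\}]$ is in bijection with the finite set $N \cap \psi^{-1}(\{(i,j)\})$, so $f|A$ is finite-to-one. The only genuinely delicate point is engineering $\psi$ whose fibers lie in $\J$: the choice $\psi(n) = f(n, 0)$ does this for free, which is exactly what allows us to invoke the unboringness of $\J$ via Proposition \ref{q}(v).
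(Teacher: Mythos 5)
Your proof is correct, and it takes a genuinely different and arguably cleaner route than the paper's. The paper works directly with the definition of unboring: it fixes a bijection $f\colon\omega^2\to\omega^3$ and locates a set $A\in\mathcal{BI}$ with $f^{-1}[A]\notin\I$ by a three-way case analysis according to which level of $f^{-1}$ is $\I$-small, invoking the hereditary weak P-ideal property of $\J$ in one case and the plain weak P-ideal property in another. You instead verify Proposition~\ref{q}(v) and compress the whole argument by the ``zeroth slice'' reduction $\psi(n)=f(n,0)$: the fibers of $\psi$ are contained in the $\J$-small supports of the fibers of $f$, which both rules out constancy on the $\J$-positive set and licenses the application of unboringness of $\J$. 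A notable byproduct is that your argument only uses that $\J$ is \emph{unboring} (which follows from hereditary weak P-ideal by Proposition~\ref{boringprop}(a)), not the hereditary weak P-ideal property itself, so you in fact prove the slightly stronger statement that $\fin^2\cap(\J\otimes\emptyset)$ is unboring whenever $\J$ is. The paper's version, while requiring a stronger hypothesis, stays closer to the bare definitions and does not route through Proposition~\ref{q}(v); for the paper's intended application (Example~\ref{ex_prec}, where $\J=\I_d$) both hypotheses are satisfied, so nothing is lost either way.
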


\begin{proof}
Denote $\I=\fin^2\cap(\J\otimes\emptyset)$ and let $\pi_1:\omega^2\to\omega$ be the projection onto the first coordinate, i.e., $\pi_1(i,j)=i$ for all $(i,j)\in\omega^2$. Fix any bijection $f:\omega^2\to\omega^3$. We need to find $A\in\BI$ such that $f^{-1}[A]\notin\I$. There are three possible cases:

If $f^{-1}[\{(i,j)\}\times\omega]\notin\I$ for some $(i,j)\in\omega^2$, then $\{(i,j)\}\times\omega\in\mathcal{BI}$ is the required set.

If $f^{-1}[\{(i,j)\}\times\omega]\in\I$ for all $(i,j)\in\omega^2$, but $A=\pi_1[f^{-1}[\{i\}\times\omega^2]]\notin\J$ for some $i\in\omega$, then consider the family $\{\pi_1[f^{-1}[\{(i,j)\}\times\omega]]:\ j\in\omega\}\subseteq\J|A$ (note that it covers $A$). Since $\J$ is a hereditary weak P-ideal, there is a $B\subseteq A$, $B\notin\J$ such that $B\cap\pi_1[f^{-1}[\{(i,j)\}\times\omega]]$ is finite for all $j\in\omega$. For each $j\in\omega$ find a finite set $F_j\subseteq f^{-1}[\{(i,j)\}\times\omega]$ with $\pi_1[F_j]=B\cap\pi_1[f^{-1}[\{(i,j)\}\times\omega]]$. Define $C=f[\bigcup_j F_j]$. Then $f^{-1}[C]=\bigcup_j F_j\notin\I$ (as $\pi_1[\bigcup_j F_j]=B\notin\J$). On the other hand, $C\in\mathcal{BI}$ as $C\subseteq \{i\}\times\omega^2$ and $C\cap(\{(i,j)\}\times\omega)=f[F_j]$ is finite for all $j\in\omega$.

If $\pi_1[f^{-1}[\{i\}\times\omega^2]]\in\J$ for all $i\in\omega$, then the family $\{\pi_1[f^{-1}[\{i\}\times\omega^2]]:\ i\in\omega\}\subseteq\J$ and, as $\J$ is a weak P-ideal, we can find $B\notin\J$ such that $B\cap\pi_1[f^{-1}[\{i\}\times\omega^2]]$ is finite for all $i\in\omega$. Similarly as in the previous paragraph, for each $i\in\omega$ find a finite set $F_i\subseteq f^{-1}[\{i\}\times\omega^2]$ with $\pi_1[F_i]=B\cap\pi_1[f^{-1}[\{i\}\times\omega^2]]$. Then for $C=f[\bigcup_i F_i]$ we have $f^{-1}[C]\notin\I$, but $C\in\mathcal{BI}$ as $C\cap(\{i\}\times\omega^2)=f[F_i]$ is finite for all $i\in\omega$.
\end{proof}

\begin{lemma}
\label{l2}
Let $\J$ be a tall P-ideal on $\omega$. Then $\J\not\leq_K\fin^2\cap(\J\otimes\emptyset)$.
\end{lemma}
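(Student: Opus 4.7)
The plan is to prove the stronger statement $\J\not\leq_K\fin^2$; since $\fin^2\cap(\J\otimes\emptyset)\subseteq\fin^2$, this immediately yields the lemma. Suppose for contradiction that $f:\omega^2\to\omega$ satisfies $f^{-1}[A]\in\fin^2$ for every $A\in\J$. For each $i\in\omega$ put
$$B_i=f[\{i\}\times\omega],\qquad E_i=\{n\in B_i:\{j\in\omega:f(i,j)=n\}\text{ is infinite}\}.$$
A direct unpacking shows that the $i$-th vertical section of $f^{-1}[A]$ is infinite if and only if $A\cap E_i\neq\emptyset$ or $A\cap B_i$ is infinite. I will use this to obstruct $f$ via two complementary claims.

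First, I would show that $E_i=\emptyset$ for all but finitely many $i$. Otherwise pick distinct $i_0<i_1<\cdots$ with $E_{i_k}\neq\emptyset$ and choose $n_k\in E_{i_k}$. If some value $n$ occurs as $n_k$ for infinitely many $k$, then $A=\{n\}\in\fin\subseteq\J$ already gives $f^{-1}[A]\notin\fin^2$ (the $i_k$-th section is infinite for each such $k$). Otherwise $\{n_k:k\in\omega\}$ is infinite, and tallness of $\J$ yields an infinite $A\in\J$ with $A\subseteq\{n_k:k\in\omega\}$; whenever $n_k\in A$ the $i_k$-th section of $f^{-1}[A]$ is infinite, again contradicting $f^{-1}[A]\in\fin^2$.

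Next, I would show that $B_i$ is finite for all but finitely many $i$. If infinitely many $B_{i_k}$ were infinite, tallness provides, for each $k$, an infinite $B'_{i_k}\subseteq B_{i_k}$ with $B'_{i_k}\in\J$, and the P-ideal property then gives $A\in\J$ with $B'_{i_k}\setminus A$ finite for every $k$. Then $B_{i_k}\cap A\supseteq B'_{i_k}\cap A$ is infinite for each $k$, so $f^{-1}[A]$ has infinitely many infinite sections, contradicting $f^{-1}[A]\in\fin^2$.

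Combining the two claims, there is some $i$ with both $E_i=\emptyset$ and $B_i$ finite. But then $\omega=\bigcup_{n\in B_i}\{j\in\omega:f(i,j)=n\}$ would be a finite union of finite sets, which is absurd. The key technical point---and the reason the hypothesis is stated the way it is---lies in the complementary roles played by the two structural properties of $\J$: tallness alone suffices to rule out any single row being too rich (via one small set $A\in\J$ at a time), while the P-ideal property is what allows a \emph{single} $A\in\J$ to simultaneously absorb infinitely many almost disjoint subsets modulo finite. Note also that the argument does not use the $\J\otimes\emptyset$ part of $\I$ at all: the Kat\v{e}tov non-reducibility holds with the larger target ideal $\fin^2$.
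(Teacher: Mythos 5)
Your proof is correct, and it in fact proves a slightly stronger statement ($\J\not\leq_K\fin^2$), as you note. Since $\leq_K$ is monotone in the target ideal under inclusion, this yields the lemma. Both ingredients of the hypothesis (tallness and the P-ideal property) are used in exactly the right places, and the final pigeonhole that some row $i$ has $E_i=\emptyset$ with $B_i$ finite is a genuine contradiction.

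Your route is close in spirit to the paper's but organized differently, and in one place it is arguably cleaner. The paper splits on whether infinitely many rows have finite image $f[\{i\}\times\omega]$; in that branch it considers the single global set $T=\bigcup_i E_i$ and splits again on whether $T$ is finite or infinite, while in the complementary branch (almost all row images infinite) it uses exactly your tallness-plus-P-ideal argument for the second claim. Your version replaces the first branch by a self-contained claim about the sets $E_i$: it shows, using only tallness and a pigeonhole on the chosen witnesses $n_k$, that all but finitely many $E_i$ are empty; this avoids having to reason about the set $T$ and, in particular, avoids the delicate point in the paper's ``$T$ infinite'' subcase, where it is not immediate from the written argument that the infinite set $A\subseteq T$ produced by tallness yields infinitely many infinite rows in $f^{-1}[A]$ (the witnessing rows for different $n\in A$ need not be distinct). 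Your two-claim decomposition handles this uniformly. Your closing observation that the $\J\otimes\emptyset$ factor is never needed is also accurate: the paper's proof likewise only ever verifies $f^{-1}[A]\notin\fin^2$.
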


\begin{proof}
Denote $\I=\fin^2\cap(\J\otimes\emptyset)$ and fix any $f:\omega^2\to\omega$. If there are infinitely many $i\in\omega$ with $f[\{i\}\times\omega]$ finite, then denote $T=\{n\in\omega:\ (\exists i\in\omega)\ f(i,j)=n\text{ for infinitely many }j\in\omega\}$. If $T\in\fin\subseteq\J$ then $f^{-1}[T]\notin\I$ (as there are infinitely many $i\in\omega$ with $f[\{i\}\times\omega]\in\fin$) and we are done. On the other hand, if $T$ is infinite then, using the tallness of $\J$, find an infinite $A\in\J$ with $A\subseteq T$. Then $f^{-1}[A]\notin\I$ as $f^{-1}[A]\notin\fin^2$. 

Assume now that $f[\{i\}\times\omega]$ is infinite for almost all $i\in\omega$ and denote by $T$ the set of all those $i$s. Since $\J$ is tall, for each $i\in T$ we can find an infinite $A_i\in\J$ with $A_i\subseteq f[\{i\}\times\omega]$. Using the fact that $\J$ is a P-ideal, find $A\in\J$ with $A_i\setminus A$ finite for all $i\in T$. Then $f^{-1}[A]\notin\I$ as $f^{-1}[A]\cap (\{i\}\times\omega)$ is infinite for all $i\in T$.
\end{proof}

\begin{lemma}
\label{l3}
If $\J$ is a $\leq_{KB}$-uniform weak P-ideal on $\omega$, then $\J\preceq\fin^2\cap(\J\otimes\emptyset)$.
\end{lemma}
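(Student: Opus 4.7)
The plan is to prove the non-trivial direction $\J\preceq\I$ (with $\I = \fin^2\cap(\J\otimes\emptyset)$); the strict $\prec$ in the statement then follows by pairing this with Lemma~\ref{l2}, which gives $\J\not\leq_K\I$ and thereby witnesses that $\preceq$ and $\leq_K$ genuinely differ at $(\J,\I)$ (cf.\ Proposition~\ref{p1}(b)).

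\textbf{Step 1 (reduction to $L_i\in\J$ for all $i$).} Fix a partition $(H_{i,j})\subseteq\J$ of $\omega$ with $\fin\otimes\emptyset\subseteq\hat{\J}(H_{i,j})$ and write $L_i:=\bigcup_j H_{i,j}$ and $\rho(k)=(i_k,j_k)$ for the unique pair with $k\in H_{i_k,j_k}$. I first observe that $L_i\in\J$ for every $i$: by Corollary~\ref{K-uniform}, the $\leq_K$-uniform weak P-ideal $\J$ is hereditary weak P, so if any $L_i$ were $\J$-positive the hereditary weak P property of $\J|L_i$ applied to the partition $(H_{i,j})_j$ would produce a positive selector $X\subseteq L_i$ with $X\cap H_{i,j}\in\fin$ for all $j$; but the hypothesis $\fin\otimes\emptyset\subseteq\hat{\J}(H_{i,j})$ tested against $A=\{i\}\times\omega$ forces $X\in\J$, a contradiction.

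\textbf{Step 2 (constructing $(G_{i,j})\subseteq\I$).} I would fix a bijection $\tau\colon\omega^3\to\omega^2$ and define $\psi\colon\omega^2\to\omega^2$ by $\psi(k,l)=\tau(\rho(k),l)$. Setting $G_{i,j}:=\psi^{-1}\{(i,j)\}$ one obtains $G_{i,j}=H_{i',j'}\times\{l'\}$, where $((i',j'),l')=\tau^{-1}(i,j)$. Each $G_{i,j}$ lies in $\I$, since $\pi_1[G_{i,j}]=H_{i',j'}\in\J$ and every row section of $G_{i,j}$ is the singleton $\{l'\}$; bijectivity of $\tau$ ensures that the $G_{i,j}$ form a partition of $\omega^2$.

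\textbf{Step 3 (verifying the inclusion).} For $A\in\hat{\J}(H_{i,j})\cap\fin^2$ and $Y\subseteq\bigcup_{(i,j)\in A}G_{i,j}$ with $|Y\cap G_{i,j}|<\infty$ for every $(i,j)\in A$, I would split the task of showing $Y\in\I$ into two parts. For $\pi_1[Y]\in\J$: write $X:=\pi_1[Y]$ and unravel $|Y\cap G_{i,j}|<\infty$ through $\tau$ to see that $X\cap H_{i',j'}\in\fin$ for the relevant $(i',j')$ in the projection of $\tau^{-1}(A)$, so the hypothesis $A\in\hat{\J}(H_{i,j})$ yields $X\in\J$. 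For $Y\in\fin^2$: using $A\in\fin^2$ and the singleton row sections of each $G_{i,j}$, one must show $\{k:Y_{(k)}\text{ infinite}\}$ is finite, reducing it from a $\J$-set (which the naive bookkeeping gives) to a finite set by exploiting a finite-to-one row-index map baked into $\tau$.

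The main obstacle is making these two extractions succeed together with a single $\tau$. A generic bijection delivers $\pi_1[Y]\in\J$ without trouble, but leaves $\{k:Y_{(k)}\text{ infinite}\}$ only in $\J$ rather than finite, which is insufficient for $\fin^2$-membership. This is precisely where $\leq_{KB}$-uniformity (strictly stronger than $\leq_K$-uniformity) of $\J$ is used: the finite-to-one maps it supplies allow $\tau$ to be chosen so that the associated row-index assignment on $\omega$ is finite-to-one, thereby sharpening $\J$-smallness to genuine finiteness and closing the verification.
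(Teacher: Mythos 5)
Your Steps 1 and 2 are sound (though the Step 1 observation that each $L_i\in\J$ is never actually used afterwards), but the obstacle you flag at the end of Step 3 is a genuine gap, and the proposed remedy --- ``baking a finite-to-one row-index map into $\tau$'' --- does not close it. With your blocks $G_{i,j}=H_{i',j'}\times\{l'\}$, each full row $\{k\}\times\omega$ of $\omega^2$ is spread over the infinitely many blocks $G_{\tau(\rho(k),l)}$, $l\in\omega$. So the hypothesis ``$Y\cap G_{i,j}$ is finite for all $(i,j)\in A$'' yields only $Y_{(k)}\subseteq S_{\rho(k)}:=\{l:\tau(\rho(k),l)\in A\}$, and gives no finiteness of $Y_{(k)}$ when $S_{\rho(k)}$ is infinite. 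Moreover --- contrary to your ``no trouble'' remark --- the finiteness of $\pi_1[Y]\cap H_{i',j'}$ that you need in order to invoke $A\in\hat{\J}(H_{i,j})$ has exactly the same failure mode, since $\pi_1[Y]\cap H_{i',j'}\subseteq\bigcup_{l\in S_{(i',j')}}\pi_1\bigl[Y\cap G_{\tau((i',j'),l)}\bigr]$ is a union over $S_{(i',j')}$. Both halves of the verification therefore require controlling $\{(i',j'):S_{(i',j')}\text{ is infinite}\}$, but $\tau$ must be fixed before $A$ is given, and for any bijection $\tau$ and any $(i',j')$ the image $\tau[\{(i',j')\}\times\omega]$ is an infinite subset of $\omega^2$ from which one can carve a set $A\in\fin^2$ making $S_{(i',j')}$ infinite. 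There is no single $\tau$ that works uniformly over $A\in\hat{\J}(H_{i,j})\cap\fin^2$, and no further appeal to $\leq_{KB}$-uniformity repairs this, because the deficiency lies in the geometry of the blocks, not in the choice of $\tau$.

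The paper's construction sidesteps the problem by building ``thick columns'' rather than your ``thin rows.'' It takes $C\notin\J$ with $C\cap H_{i,j}\in\fin$ for all $(i,j)$ (weak P-ideal), a finite-to-one $f:\omega\to C$ witnessing $\J|C\leq_{KB}\J$ ($\leq_{KB}$-uniformity), and sets $G_{i,j}=F_{i,j}\times\omega$ where $F_{i,j}=f^{-1}[C\cap H_{i,j}]$ is finite. The finite sets $F_{i,j}$ partition $\omega$, so each row $\{k\}\times\omega$ lies entirely in a \emph{single} block $G_{i,j}$; consequently ``$X\cap G_{i,j}\in\fin$ for all $(i,j)$'' forces $X_{(k)}\in\fin$ for every $k$, i.e., $X\in\emptyset\otimes\fin\subseteq\fin^2$, automatically and independently of $A$. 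Membership in $\J\otimes\emptyset$ then follows from $\pi_1[X]\subseteq f^{-1}\bigl[C\cap\bigcup_{(i,j)\in A}H_{i,j}\bigr]\in\J$, using $A\in\hat{\J}(H_{i,j})$ and the fact that $f$ pulls $\J|C$ back into $\J$. So $\leq_{KB}$-uniformity is indeed the key ingredient, but its role is to supply a single finite-to-one map that both makes the blocks $F_{i,j}$ finite (hence $G_{i,j}\in\fin^2$) and preserves $\J$-smallness under pullback --- it is used to build the partition, not to tune a bijection after the fact. Rewriting your Steps 2--3 along these lines would repair the argument.
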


\begin{proof}
Denote $\I=\fin^2\cap(\J\otimes\emptyset)$. Fix any partition $(H_{i,j})\subseteq\J$ with $\emptyset\otimes\fin\subseteq\hat{\J}(H_{i,j})$. Since $\J$ is a weak P-ideal, there is a $C\notin\J$ with $C\cap H_{i,j}\in\fin$ for all $i,j\in\omega$. Let $f:\omega\to C$ be a finite-to-one function witnessing $\J|C\leq_{KB}\J$ (which exists, as $\J$ is $\leq_{KB}$-uniform). Define a partition $(G_{i,j})$ of $\omega^2$ by $G_{i,j}=f^{-1}[C\cap H_{i,j}]\times\omega$. Since $f$ is finite-to-one and $C\cap H_{i,j}\in\fin$, the set $f^{-1}[C\cap H_{i,j}]$ is finite, so $G_{i,j}\in\I$ for each $(i,j)\in\omega^2$. We claim that $\hat{\J}(H_{i,j})\cap\fin^2\subseteq\hat{\I}(G_{i,j})$. 

Let $A\in\hat{\J}(H_{i,j})\cap\fin^2$. Then each subset of $\bigcup_{(i,j)\in A}H_{i,j}$ with finite intersection with each $H_{i,j}$ is in $\J$. In particular, $C\cap\bigcup_{(i,j)\in A}H_{i,j}\in\J|C$ and consequently $f^{-1}[C\cap\bigcup_{(i,j)\in A} H_{i,j}]\in\J$. We need to show that $A\in\hat{\I}(G_{i,j})$. Fix any $X\subseteq\bigcup_{(i,j)\in A}G_{i,j}$ with $X\cap G_{i,j}$ finite for all $i,j\in\omega$. Then $X\in\emptyset\otimes\fin\subseteq\fin^2$ and
$$X\subseteq\bigcup_{(i,j)\in A}G_{i,j}=f^{-1}\left[C\cap\bigcup_{(i,j)\in A} H_{i,j}\right]\times\omega\in\J\otimes\emptyset.$$
Hence, $X\in\I$ and $A\in\hat{\I}(G_{i,j})$.
\end{proof}

\begin{example}
\label{ex_prec}
Even for unboring $\bf{\Sigma^0_4}$ ideals, $\leq_K$ and $\prec$ are not the same, i.e., there are unboring $\bf{\Sigma^0_4}$ ideals $\I$ and $\J$ such that $\J\not\leq_K\I$ but $\J\preceq\I$.

Indeed, Let $\J$ be a tall $\leq_{KB}$-uniform analytic P-ideal (for instance, $\J=\I_d$ -- see Example \ref{I_d is KB}). Then $\J$ is $\bf{\Pi^0_3}$ (see \cite[Lemma 1.2.2 and Theorem 1.2.5]{Farah}), so it is a hereditary weak P-ideal (by Proposition \ref{all-cor}). Moreover, $\I=\fin^2\cap(\J\otimes\emptyset)$ is $\bf{\Sigma^0_4}$ (as an intersection of two $\bf{\Sigma^0_4}$ ideals). Thus, by Lemmas \ref{l1}, \ref{l2} and \ref{l3}, we obtain the thesis.
\end{example}

\section{Mr\'{o}wka spaces revisited}

In this section we turn our attention to two particular cases: pairs of ideals $(\I,\J)$ such that $\I$ is extendable to a hereditary weak P-ideal $\I'$ such that $\J\not\leq_K\I'$ and pairs of ideals $(\I,\J)$ satisfying $\J\not\leq_K\I|A$ for all $A\notin\I$. Probably it is possible to state the results of this section in a more general way, however it would involve a lot of technicalities. 

\begin{lemma}
\label{new1}
If $\J$ is an ideal such that $\J\not\leq_K\J'$ for some ideal $\J'$, then $\J$ is tall.
\end{lemma}

\begin{proof}
If $\J$ would not be tall then any bijection between $\bigcup\J'$ and the set $B\notin\J$ with $\J|B=\fin(B)$ would witness $\J\leq_K\J'$.
\end{proof}

\begin{lemma}
\label{new2}
Assume that there is an AD family $\mathcal{A}\subseteq\J$ such that for every function $f\in\mathcal{D}_{\I}$ there is a $B\notin\I$ such that $f|B$ is finite-to-one and $f[B]\subseteq A$ for some $A\in\cA$. Then $\Phi(\cA)\in\text{FinBW}(\I)\setminus\text{FinBW}(\J)$. 
\end{lemma}

\begin{proof}
The fact that $\Phi(\cA)\in\text{FinBW}(\I)$ follows from Proposition \ref{MrowkaChar}. We need to show that $\Phi(\cA)\notin\text{FinBW}(\J)$. Without loss of generality we can assume that $\J$ is an ideal on $\omega$. We claim that the sequence $(n)_{n\in\omega}$ does not have a convergent subsequence indexed by some $B\notin\J$. Indeed, no subsequence of $(n)_{n\in\omega}$ converges to an element of $\omega$ and if $(n)_{n\in B}$ converges to some $A\in\cA$ then $B\setminus A\in\fin$, so $B\in\J$ (as $\mathcal{A}\subseteq\J$). Finally, no $(n)_{n\in B}$ converges to $\infty$ as $\Phi(\cA)$ is Hausdorff and $\cA$ is a MAD family, by Lemma \ref{MAD} ($B\cap A$ has to be infinite for some $A\in\cA$, so the sequence $(n)_{n\in B}$ cannot converge to $\infty$ as it has to have a subsequence $(n)_{n\in B\cap A}$ converging to $A\in\cA$).
\end{proof}

\begin{theorem}
\label{Dthm2}
Under CH, if $\I$ is unboring and $\J\not\leq_K\I|A$ for all $A\notin\I$, then there is a Mr\'{o}wka space in $\text{FinBW}(\I)\setminus\text{FinBW}(\J)$. Moreover, if $\I$ is strongly unboring, then CH can be relaxed to (MA($\sigma$-centered).
\end{theorem}

\begin{proof}
This proof is very similar to the proof of Theorem \ref{Mrowka}, so we will omit some details.

Enumerate $\cD_\I=\{f_\alpha:\ \omega\leq\alpha<\cc\}$. By induction on $\alpha$, we will construct a family $\mathcal{A}=\{A_\alpha:\alpha<\cc\}$ as in Lemma \ref{new2}. To start, find a partition $(A_n)$ of $\omega$ into sets belonging to $\J$ (which exists by Lemma \ref{new1}).

Assume that $\omega\leq\alpha<\cc$ and that $A_\beta\in\J$, for all $\beta<\alpha$, have been chosen. 

Suppose first that CH holds. If there is a $\beta<\alpha$ with $f_\alpha[C]\subseteq A_\beta$ for some $C\notin\I$ with $f_\alpha|C$ finite-to-one, then put $A_\alpha=A_\beta$. Otherwise, enumerate $\alpha$ as $(\beta_i)_{i\in\omega}$ and consider the partition $(B_i)$ of $\omega$, where $B_0=A_{\beta_0}$ and $B_i=A_{\beta_i}\setminus\bigcup_{j<i}A_{\beta_j}$ for all $0<i<\omega$. Since $\BI\not\sqsubseteq\I$, there is a $C\notin\I$ such that $f_\alpha|C\in\cD_{\fin(C)}$ and $f_\alpha[C]\cap B_i\in\fin$ for all $i\in\omega$. Since $\J\not\leq_K\I|C$, there is a $B\in\J$ with $f_\alpha^{-1}[B]\cap C\notin\I|C$. Then $f_\alpha|B$ is finite-to-one. Put $A_\alpha=f_\alpha[f_\alpha^{-1}[B]\cap C]\subseteq B\in\J$ and observe that $A_\alpha$ has finite intersection with each $A_\beta$ for $\beta<\alpha$ (as $A_\alpha\subseteq f_\alpha[C]$).

Suppose now that $\I$ is strongly unboring and MA($\sigma$-centered) holds. Then we can find $C\notin\I$ such that $f_\alpha|C$ is finite-to-one and $\I|C$ is $\omega$-diagonalizable by $(\mathcal{I}|C)^\star$-universal sets. If $C\cap f^{-1}[A_\beta]\notin\I$ for some $\beta<\alpha$ then put $A_\alpha=A_\beta$. Otherwise, perform the same construction as in the proof of Theorem \ref{MAlem} to get a set $B\subseteq C$, $B\notin\I|C$ such that $f_\alpha|B\in\cD_{\fin(B)}$ and $f_\alpha[B]\cap A_\beta$ is finite for all $\beta<\alpha$. 

Since $B\notin\I$, $\J\not\leq_K\I|B$. Thus, there is a $D\in\J$ such that $f_\alpha^{-1}[D]\cap B\notin\I|B$. Define $A_\alpha=f_\alpha[f_\alpha^{-1}[D]\cap B]\subseteq D\in\J$ and observe that $A_\alpha$ has finite intersection with each $A_\beta$ for $\beta<\alpha$ (as $A_\alpha\subseteq f_\alpha[B]$).
\end{proof}

\begin{theorem}
\label{Dthm}
Under CH, if $\I$ is extendable to a hereditary weak P-ideal $\I'$ such that $\J\not\leq_K\I'$, then there is a Mr\'{o}wka space in $\text{FinBW}(\I)\setminus\text{FinBW}(\J)$. Moreover, if $\I'$ is coanalytic, then CH can be relaxed to MA($\sigma$-centered).
\end{theorem}

\begin{proof}
Note that $\text{FinBW}(\I')\subseteq\text{FinBW}(\I)$ (as $\I\subseteq\I'$), so it suffices to find a Mr\'{o}wka space in $\text{FinBW}(\I')\setminus\text{FinBW}(\J)$. 

Let $\{f_\alpha:\ \omega\leq\alpha<\cc\}$ be an enumeration of $\mathcal{D}_{\I'}$. By induction on $\alpha$, we will construct a family $\mathcal{A}=\{A_\alpha:\alpha<\cc\}$ as in Lemma \ref{new2}. To start, find a partition $(A_n)$ of $\omega$ into sets belonging to $\J$ (which exists by Lemma \ref{new1}).

Assume that $\alpha<\cc$ and that $A_\beta\in\J$, for all $\beta<\alpha$, have been chosen. Since $\J\not\leq_K\I'$, there is a $H\in\J$ such that $f^{-1}_\alpha[H]\notin\I'$. 

Suppose first that CH holds. If there is a $\beta<\alpha$ with $f_\alpha[C]\subseteq A_\beta$ for some $C\notin\I'|f^{-1}_\alpha[H]$ with $f_\alpha|C$ finite-to-one, then put $A_\alpha=A_\beta$. Otherwise, enumerate $\alpha$ as $(\beta_i)_{i\in\omega}$. Consider the same partition $(B_i)$ as in the proof of Theorem \ref{Dthm2}. 

Since $\BI\not\sqsubseteq\I'|f^{-1}_\alpha[H]$ (by Proposition \ref{hh}), there is a $C\notin\I'|f^{-1}_\alpha[H]$ such that $f_\alpha|C\in\cD_{\fin(C)}$ and $f_\alpha[C]\cap B_i$ is finite for all $i\in\omega$ (see the proof of Theorem \ref{Mrowka} for details). Moreover, since $C\subseteq f^{-1}_\alpha[H]$, $f_\alpha[C]$ is in $\J$. Put $A_\alpha=f_\alpha[C]$ and observe that $A_\alpha\cap A_{\beta_i}$ is finite for each $i\in\omega$.

Suppose now that $\I'$ is coanalytic and MA($\sigma$-centered) holds. Since $\I'$ is a hereditary weak P-ideal, $\fin^2\not\sqsubseteq\I'|f_\alpha^{-1}[H]$ and we can find $C\notin\I'$, $C\subseteq f_\alpha^{-1}[H]$ such that $f_\alpha|C$ is finite-to-one (and $f_\alpha[C]\subseteq H\in\J$). 

If there is a $\beta<\alpha$ such that $C\cap f^{-1}[A_\beta]\notin\I'$ then put $A_\alpha=A_\beta$. If this is not the case, note that $\I'$ being a coanalytic hereditary weak P-ideal implies that $\I'$ is strongly unboring (by Proposition \ref{r}(b)). Hence, in the same way as in Theorem \ref{MAlem}, we can produce $B\subseteq C$ such that $B\notin\I'|C$ and $f_\alpha[B]\cap A_\beta$ is finite for all $\beta<\alpha$. Define $A_\alpha=f_\alpha[B]$ and note that $A_\alpha=f_\alpha[B]\subseteq f_\alpha[C]\subseteq H\in\J$. 
\end{proof}

\section{Distinguishing ideals}

\begin{proposition}
\label{prec_2}
If $\J$ is a weak P-ideal and $\J\preceq\I$ then $\text{FinBW}(\I)\subseteq\text{FinBW}(\J)$.
\end{proposition}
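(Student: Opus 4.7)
The plan is to transfer the sequence $(f(n))_{n\in\bigcup\J}$ to a sequence $(\tilde f(m))_{m\in\bigcup\I}$ via the partition data given by $\J\preceq\I$, invoke $X\in\text{FinBW}(\I)$ on $\tilde f$, and pull the resulting convergence back to a $\J$-positive set using the inclusion $\hat\J(H_{i,j})\cap\fin^2\subseteq\hat\I(G_{i,j})$.

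First I would reduce to the case that $\I$ is unboring: if $\I$ is boring, Proposition \ref{f} forces every Hausdorff space in $\text{FinBW}(\I)$ to be boring, and Proposition \ref{boringspaces2}, applied to the weak P-ideal $\J$ (so $\fin^2\not\sqsubseteq\J$), puts every boring space into $\text{FinBW}(\J)$. Assume $\I$ is unboring and take $f\colon\bigcup\J\to X$ with $X\in\text{FinBW}(\I)$; without loss of generality $f^{-1}[\{y\}]\in\J$ for every $y\in X$, otherwise a constant subsequence suffices. Enumerate $f[\bigcup\J]=\{y_0,y_1,\dots\}$, let $D_i=f^{-1}[\{y_i\}]\in\J$, enumerate each $D_i=\{d_{i,0},d_{i,1},\dots\}$, and put $H_{i,j}=\{d_{i,j}\}$. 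This is a partition of $\bigcup\J$ into $\J$-sets, and $\fin\otimes\emptyset\subseteq\hat\J(H_{i,j})$ since any finite union of columns is a finite union of $D_i$'s. Apply $\J\preceq\I$ to obtain a partition $(G_{i,j})\subseteq\I$ of $\bigcup\I$ with $\hat\J(H_{i,j})\cap\fin^2\subseteq\hat\I(G_{i,j})$, and define $\tilde f(m)=y_i$ for $m\in G_{i,j}$, so $\tilde f$ is constant with value $y_i$ on each column $\text{col}_i:=\bigcup_j G_{i,j}$.

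Applying $X\in\text{FinBW}(\I)$ gives $E\notin\I$ and $y^*\in X$ with $\tilde f|E\to y^*$. Hausdorffness and convergence force $E\cap\text{col}_i$ to be finite whenever $y_i\neq y^*$ and $E\cap\text{col}_i\neq\emptyset$ (choose $U\ni y^*$ missing $y_i$; then $E\cap\text{col}_i\subseteq\{m\in E:\tilde f(m)\notin U\}$, which is finite by convergence). In the main case, $E':=E\setminus\bigcup_{i:y_i=y^*}\text{col}_i$ is not in $\I$; then every $E'\cap G_{i,j}$ is finite and every row of $T:=\{(i,j):E'\cap G_{i,j}\neq\emptyset\}$ is finite, so $T\in\fin\otimes\fin\subseteq\fin^2$. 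Since $E'$ witnesses $T\notin\hat\I(G_{i,j})$, the inclusion from $\J\preceq\I$ gives $T\notin\hat\J(H_{i,j})$, that is, $Y:=\{d_{i,j}:(i,j)\in T\}\notin\J$. A direct verification using $T\in\fin^2$ and the convergence of $\tilde f|E$ shows $\{n\in Y:f(n)\notin U\}$ is finite for every open $U\ni y^*$, hence $f|Y\to y^*$ as required. The main obstacle is the residual case $y^*=y_{i^*}$ with $E\subseteq^*\text{col}_{i^*}$: running the same argument with $T\subseteq\{i^*\}\times\omega$ shows that if additionally $E\cap G_{i^*,j}\in\fin$ for every $j$ then $\{d_{i^*,j}:j\in T_{(i^*)}\}$ is simultaneously a subset of $D_{i^*}\in\J$ and, by the inclusion, not in $\J$ — a contradiction. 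Consequently $E$ meets infinitely many $G_{i^*,j}$ in infinite sets and $\fin^2\sqsubseteq\I|\text{col}_{i^*}$; leveraging this structural information (for instance by modifying $(H_{i,j})$ at the outset so that each $\J$-column mixes several $D_k$'s, making the pure-column concentration impossible) to reduce back to the main case is the technical heart of the argument.
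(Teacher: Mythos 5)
Your overall scheme — build a partition $(H_{i,j})$, pass through $\J\preceq\I$ to get $(G_{i,j})$, feed the transferred function to $X\in\text{FinBW}(\I)$, pull back — is the same as the paper's, and the reduction to $\I$ unboring via Propositions \ref{f} and \ref{boringspaces2} is fine. The concrete divergence, which is also the source of the gap you yourself flag, is your choice of $H_{i,j}$: taking singletons $H_{i,j}=\{d_{i,j}\}$ makes the $i$-th $\J$-column equal to $D_i=f^{-1}[\{y_i\}]$, and hence makes the transferred function $\tilde f$ \emph{constant} on each $\I$-column $\bigcup_j G_{i,j}$. Since the definition of $\preceq$ only forces each piece $G_{i,j}$ into $\I$, not the columns, $\tilde f$ can converge trivially on an $\I$-positive piece of a single column; that is precisely your residual case, and there is no route back to a $\J$-positive convergent set for $f$. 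Your closing remark that one should modify $(H_{i,j})$ so each $\J$-column mixes several $D_k$'s is exactly the right instinct, but it is not carried out, and ``mixing'' alone is not enough.

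The paper supplies the missing device by a careful choice of the mixture: after discharging the case where $\overline{g[\omega]}$ is boring (which strictly contains your ``$\I$ boring'' case), it partitions $g[\omega]$ into infinitely many sets $B_i=\{b^i_j:j\in\omega\}$, each a convergent sequence with a distinct limit $x_i$, and sets $H_{i,j}=g^{-1}[\{b^i_j\}]$. The $i$-th $\J$-column is then $g^{-1}[B_i]$, and the transferred function $h$ sends $G_{i,j}$ to the single point $b^i_j$, so that (after discarding at most one fiber) $h$ is finite-to-one on each $\I$-column. Now convergence of $h|C$ on an $\I$-positive $C$ cannot be trivial: concentration of $C$ in one column forces the limit to be the corresponding $x_i$, makes $h|C$ finite-to-one with $h[C]\subseteq B_i$, and this contradicts $\fin\otimes\emptyset\subseteq\hat{\I}(G_{i,j})$ on an $\I$-positive set. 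The paper also treats $\fin\otimes\emptyset\not\subseteq\hat{\J}(H_{i,j})$ as a separate, directly solvable case (your singleton choice silently eliminates it). So the step you called ``the technical heart of the argument'' really is missing from your proposal; the repair is not merely to mix the $D_k$'s, but to group them by convergent sequences in $\overline{g[\omega]}$ so that the transferred map is finite-to-one on every column.
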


\begin{proof}
We can assume that $\I$ and $\J$ are ideals on $\omega$. Let $X\in\text{FinBW}(\I)$ and fix $g:\omega\to X$. We need to find $A\notin\J$ such that $(g(n))_{n\in A}$ converges. Without loss of generality we may assume that $g^{-1}[\{x\}]\in\J$ for all $x\in X$.

Suppose first that $\overline{g[\omega]}$ is a boring space. By Proposition \ref{boringspaces2}, since $\J$ is a weak P-ideal, there is an $A\notin\J$ with $(g(n))_{n\in A}$ convergent.

Assume now that $\overline{g[\omega]}$ is not boring and find $\{x_i:\ i\in\omega\}\subseteq X$, $x_i\neq x_j$ for all $i\neq j$, $i,j\in\omega$, and an infinite partition $(B_i)_{i\in\omega}$ of $g[\omega]$ into infinite sets such that for each $i\in\omega$ each injective sequence in $B_i$ converges to $x_i$. This can be done inductively. Indeed, let $\{z_i:\ i\in\omega\}$ be an enumeration of $g[\omega]$ and start the induction by finding any infinite $B'_0\subseteq g[\omega]$ which is convergent to some $x_0\in X$ ($B'_0$ exists as $X$ is sequentially compact) and putting $B_0=B'_0\cup\{z_0\}$. Assume now that $B_j$ and $x_j$ for all $j<i$ are already defined. Then for each $j,j'<i$, $j\neq j'$ there is an open set $V_{j,j'}$ such that $x_j\in V_{j,j'}$ and $x_{j'}\notin V_{j,j'}$. Define $U_j=\bigcap_{j'<i,j'\neq j}V_{j,j'}\ni x_j$. Note that since $\overline{g[\omega]}$ is not boring, $\overline{g[\omega]}\setminus\bigcup_{j<i}B_i$ cannot be boring. Thus, either $\left(\overline{g[\omega]}\setminus\bigcup_{j<i}B_i\right)\setminus\bigcup_{j<i}U_i$ is infinite and, using sequential compactness of $X$, we can find an infinite $B'_i\subseteq \left(\overline{g[\omega]}\setminus\bigcup_{j<i}B_i\right)\setminus\bigcup_{j<i}U_i$ which is convergent to some $x_i\in X$, or $\left(\overline{g[\omega]}\setminus\bigcup_{j<i}B_i\right)\setminus\bigcup_{j<i}U_i$ is finite, but there is a $j<i$ such that not every injective sequence in $U_j\cap \left(\overline{g[\omega]}\setminus\bigcup_{j<i}B_i\right)$ converges to $x_j$, so we can find an infinite $B'_i\subseteq U_j\cap \left(\overline{g[\omega]}\setminus\bigcup_{j<i}B_i\right)$ converging to some $x_i\notin\{x_j:\ j<i\}$. In both cases it suffices to put $B_i=B'_i\cup\{z_i\}$. 

Define $H_{i,j}=g^{-1}[\{b^i_j\}]\in\J$, where $B_i=\{b^i_j:\ j\in\omega\}$, for all $i,j\in\omega$. 

If $\fin\otimes\emptyset\not\subseteq\hat{\J}(H_{i,j})$ then there is an $i\in\omega$ such that $\{i\}\times\omega\notin\hat{\J}(H_{i,j})$. Thus, we can find $A\notin\J$ with $A\subseteq\bigcup_{j\in\omega}H_{i,j}$ and $A\cap H_{i,j}$ finite for all $j\in\omega$. In this case $g|A$ is finite-to-one (as $A\cap H_{i,j}$ is finite for all $j\in\omega$) and $(g(n))_{n\in A}$ converges to $x_i$ (as $g[A]\subseteq B_i$).

On the other hand, if $\fin\otimes\emptyset\subseteq\hat{\J}(H_{i,j})$, then by $\J\preceq\I$ we can find a partition $(G_{i,j})\subseteq\I$ such that $\hat{\J}(H_{i,j})\cap\fin^2\subseteq\hat{\I}(G_{i,j})$. Define $h:\omega\to X$ by 
$$h(n)=b^i_j\ \Longleftrightarrow\ n\in G_{i,j}$$
for all $n\in\omega$. Then $h^{-1}[\{x\}]\in\I$ for all $x\in X$. 

Since $X\in\text{FinBW}(\I)$, there is a $C\notin\I$ such that $(h(n))_{n\in C}$ converges in $X$. Since $X$ is Hausdorff, without loss of generality we may assume that $h|C$ is finite-to-one (there may be at most one $y\in X$ with $C\cap h^{-1}[\{y\}]$ infinite, so using the fact that $h^{-1}[\{y\}]\in\I$  it suffices to consider $C\setminus h^{-1}[\{y\}]\notin\I$ instead of $C$). Thus, $C\cap G_{i,j}\in\fin$ for all $i,j\in\omega$ (otherwise $h|C$ would not be finite-to-one) and 
$$D=\{(i,j)\in\omega^2:\ C\cap G_{i,j}\neq\emptyset\}\in\fin^2\setminus\hat{\I}(G_{i,j})\subseteq\fin^2\setminus\hat{\J}(H_{i,j}).$$ 
Indeed, $D\in\fin^2$ as $|D\cap(\{i\}\times\omega)|=\omega$ for at most one $i\in\omega$ ($|D\cap(\{i\}\times\omega)|=\omega=|D\cap(\{i'\}\times\omega)|$ for two distinct $i,i'\in\omega$ would imply that $(h(n))_{n\in C}$ has two subsequences -- one converging to $x_i$ and the other converging to $x_{i'}$). Moreover, $D\notin \hat{\I}(G_{i,j})$ as $C\subseteq\bigcup_{(i,j)\in D}G_{i,j}$ and $C\cap G_{i,j}\in\fin$ for all $i,j\in\omega$, but $C\notin\I$. Finally, $D\notin \hat{\J}(H_{i,j})$ as otherwise $D$ would belong to $\hat{\J}(H_{i,j})\cap\fin^2\subseteq\hat{\I}(G_{i,j})$.

Since $D\notin\hat{\J}(H_{i,j})$, there is an $A\notin\J$ with $\{(i,j)\in\omega^2:\ A\cap H_{i,j}\neq\emptyset\}\subseteq D$ and $A\cap H_{i,j}\in\fin$ for all $i,j\in\omega$. This implies that $g|A$ is finite-to-one and $(g(n))_{n\in A}$ is a subsequence of $(h(n))_{n\in C}$, hence, converges to the same limit as $(h(n))_{n\in C}$.
\end{proof}

\begin{corollary}\
\label{corollary}
If $\J\leq_K\I$ then $\text{FinBW}(\I)\subseteq\text{FinBW}(\J)$.
\end{corollary}

\begin{proof}
Note that if $\fin^2\sqsubseteq\J$ then $\fin^2\sqsubseteq\I$ (by Proposition \ref{propertyKat}) and $\text{FinBW}(\I)=\text{FinBW}(\J)$ by Theorem \ref{extreme}. On the other hand, if $\fin^2\not\sqsubseteq\J$ then we can apply Propositions \ref{p1}(b) and \ref{prec_2}.
\end{proof}

Our next result shows that the question whether FinBW($\I$) and FinBW($\J$) can be distinguished is easy, whenever at least one of the ideal $\I$ and $\J$ is boring.

\begin{corollary}
\label{IJ boring}
Let $\J$ be boring.
\begin{itemize}
\item[(a)] Suppose that $\fin^2\sqsubseteq\J$.
\begin{itemize}
\item[(i)] If $\fin^2\sqsubseteq\I$ then $\text{FinBW}(\I)=\text{FinBW}(\J)$.
\item[(ii)] If $\fin^2\not\sqsubseteq\I$ then $\text{FinBW}(\J)\subsetneq\text{FinBW}(\I)$. 
\end{itemize}
\item[(b)] Suppose that $\fin^2\not\sqsubseteq\J$.
\begin{itemize}
\item[(i)] If $\fin^2\sqsubseteq\I$ then $\text{FinBW}(\I)\subsetneq\text{FinBW}(\J)$.
\item[(ii)] If $\fin^2\not\sqsubseteq\I$ and $\I$ is boring then $\text{FinBW}(\I)=\text{FinBW}(\J)$.
\item[(iii)] If $\I$ is unboring then $\text{FinBW}(\J)\subsetneq\text{FinBW}(\I)$. 
\end{itemize}
\end{itemize}
\end{corollary}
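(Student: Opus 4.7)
The plan is to reduce everything to Theorem \ref{extreme}, which pins down $\text{FinBW}(\J)$ precisely. Under hypothesis (a) we have $\fin^2\sqsubseteq\J$ so by Theorem \ref{extreme}(a), $\text{FinBW}(\J)$ coincides with the class of finite spaces; under hypothesis (b) the ideal $\J$ is boring with $\fin^2\not\sqsubseteq\J$, so by Theorem \ref{extreme}(b), $\text{FinBW}(\J)$ coincides with the class of boring spaces. The six sub-cases are then settled by comparing $\text{FinBW}(\I)$ with the appropriate class.

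For the equalities, case (a)(i) follows because $\fin^2\sqsubseteq\I$ also forces $\text{FinBW}(\I)$ to coincide with the finite spaces by Theorem \ref{extreme}(a); case (b)(ii) follows because the extra assumptions ($\I$ is boring and $\fin^2\not\sqsubseteq\I$) put $\I$ under the hypothesis of Theorem \ref{extreme}(b), so $\text{FinBW}(\I)$ coincides with the boring spaces as well. For the inclusions, recall that every finite space is in every $\text{FinBW}(\I)$, and by Proposition \ref{boringspaces2} every boring space lies in $\text{FinBW}(\I)$ whenever $\fin^2\not\sqsubseteq\I$ (which in (b)(iii) is forced by $\I$ being unboring, via Proposition \ref{boringprop}(a)).

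The strict parts are handled by exhibiting a witness in the larger class but not in the smaller one. In (a)(ii) the convergent sequence space $X=\{0\}\cup\{1/(n+1):n\in\omega\}$, shown to lie in $\text{FinBW}(\I)$ inside the proof of Theorem \ref{extreme}(a), is infinite and so misses $\text{FinBW}(\J)$. In (b)(i) the one-point compactification of a countable discrete space is boring (hence in $\text{FinBW}(\J)$) but infinite, so it is outside $\text{FinBW}(\I)$ by Theorem \ref{extreme}(a). In (b)(iii) the space $\omega^2+1$ with the order topology lies in $\text{FinBW}(\I)$ by Proposition \ref{omega^2}, but it is not boring, since the points $k\cdot\omega$ for $k\ge 1$ and $\omega^2$ are all limits of surjective convergent sequences, contradicting the requirement in the definition of a boring space that there be only finitely many such limit points; hence it is outside $\text{FinBW}(\J)$.

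There is essentially no obstacle beyond matching sub-cases to the existing machinery; the only point requiring a small verification is the claim that $\omega^2+1$ is not boring, which is transparent from the definition, and the observation that an infinite one-point compactification of a discrete space is boring, which is immediate from Proposition \ref{boringspaces}(ii).
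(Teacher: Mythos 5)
Your proof is correct and takes essentially the same route as the paper, which simply cites Theorems \ref{extreme} and \ref{char}; your write-up is a careful unpacking of that reduction, including the specific witnesses (the convergent-sequence space, the one-point compactification of $\omega$, and $\omega^2+1$ via Proposition \ref{omega^2}) and the observation that $\I$ unboring forces $\fin^2\not\sqsubseteq\I$ via Proposition \ref{boringprop}(a).
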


\begin{proof}
Follows from Theorems \ref{extreme} and \ref{char}.
\end{proof}

\begin{theorem}
\label{char2}
(CH) The following are equivalent for any unboring ideal $\J$ and any hereditary weak P-ideal $\I$ (in particular, for any $\bf{\Pi^0_4}$ ideal $\I$):
\begin{itemize}
\item[(a)] $\text{FinBW}(\I)\setminus\text{FinBW}(\J)\neq\emptyset$;
\item[(b)] there is a Mr\'{o}wka space in $\text{FinBW}(\I)\setminus\text{FinBW}(\J)$;
\item[(c)] $\J\not\leq_K\I$;
\item[(d)] $\J\not\preceq\I$.
\end{itemize}
Moreover, if $\I$ is coanalytic, then it suffices to assume MA($\sigma$-centered) instead of CH.
\end{theorem}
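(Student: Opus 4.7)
The plan is to prove the cycle $(b)\Rightarrow(a)\Rightarrow(d)\Rightarrow(c)\Rightarrow(b)$, using the tools already developed in the paper. The implication $(b)\Rightarrow(a)$ is trivial. For $(a)\Rightarrow(d)$, I would argue contrapositively: since $\J$ is unboring, Proposition \ref{boringprop}(a) gives that $\J$ is a weak P-ideal, so Proposition \ref{prec_2} applies directly to yield $\J\preceq\I\Rightarrow \text{FinBW}(\I)\subseteq\text{FinBW}(\J)$. Similarly, $(d)\Rightarrow(c)$ is just the contrapositive of Proposition \ref{p1}(b), which states $\J\leq_K\I\Rightarrow\J\preceq\I$.

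The main content is $(c)\Rightarrow(b)$. Here I would exploit the fact that $\I$ is already a hereditary weak P-ideal, so it is trivially ``extendable to a hereditary weak P-ideal $\I'$ with $\J\not\leq_K\I'$'' by taking $\I'=\I$. Under CH, Theorem \ref{Dthm} then produces a Mr\'{o}wka space in $\text{FinBW}(\I)\setminus\text{FinBW}(\J)$. Under the additional hypothesis that $\I$ is coanalytic, one replaces Theorem \ref{Dthm} with Theorem \ref{DthmMA}, which requires only MA($\sigma$-centered).

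For the parenthetical remark that the result applies to every $\bf{\Pi^0_4}$ ideal $\I$, I would appeal to Proposition \ref{all-cor}(a): every $\bf{\Pi^0_4}$ ideal is a hereditary weak P-ideal, and of course coanalytic, so both versions of $(c)\Rightarrow(b)$ apply. There is no real obstacle here beyond assembling the right pieces; the only point to check carefully is that the hypotheses of Theorems \ref{Dthm} and \ref{DthmMA} are met with the choice $\I'=\I$, and that Proposition \ref{prec_2} requires only that $\J$ be a weak P-ideal (which is weaker than unboring, hence available from our hypothesis). The substantive work has already been done in Sections 6, 8 and 9; this theorem is essentially a packaging statement that collects those equivalences.
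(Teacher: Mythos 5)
Your proof is correct and follows essentially the same route as the paper: the cycle $(b)\Rightarrow(a)\Rightarrow(d)\Rightarrow(c)\Rightarrow(b)$ via Proposition \ref{prec_2}, Theorems \ref{Dthm}/\ref{DthmMA} with $\I'=\I$, and Proposition \ref{all-cor}(a) for the $\bf{\Pi^0_4}$ remark. The only (inessential) difference is that you derive $(d)\Rightarrow(c)$ from the elementary Proposition \ref{p1}(b), whereas the paper cites the full equivalence Proposition \ref{1<->2}, which is stronger than what the cycle actually needs.
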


\begin{proof}
(c)$\Longleftrightarrow$(d): This is Proposition \ref{1<->2}. 

(a)$\implies$(d): This is Proposition \ref{prec_2}. 

(c)$\implies$(b): This is Theorem \ref{Dthm}.

(b)$\implies$(a): Obvious.
\end{proof}

Since the implication (a)$\implies$(c) in Theorem \ref{char2} is true in ZFC, we are able to state the following result.

\begin{corollary}
Let $\I$ and $\J$ be $\bf{\Pi^0_4}$ ideals.
\begin{itemize}
\item[(a)] The following are equivalent:
\begin{itemize}
\item[(i)] $\J\not\leq_K\I$ in some forcing extension;
\item[(ii)] $\J\not\leq_K\I$ in every forcing extension;
\item[(iii)] $\text{FinBW}(\I)\setminus\text{FinBW}(\J)\neq\emptyset$ in every forcing extension in which MA($\sigma$-centered) holds;
\item[(iv)] there is a forcing extension in which $\text{FinBW}(\I)\setminus\text{FinBW}(\J)\neq\emptyset$.
\end{itemize}
\item[(b)] If either $\J\leq_K\I$ holds in ZFC or $\J\not\leq_K\I$ holds in ZFC, then the following are equivalent:
\begin{itemize}
\item[(i)] $\J\not\leq_K\I$;
\item[(ii)] under MA($\sigma$-centered) there is an uncountable separable space in FinBW($\I$);
\item[(iii)] it is consistent that there is an uncountable separable space in FinBW($\I$).
\end{itemize}
\end{itemize}
\end{corollary}

\begin{proof}
The proof is entirely similar to the proof of Corollary \ref{absolute} -- it suffices to replace Theorem \ref{char} with Theorem \ref{char2} and Corollary \ref{c} with Propositions \ref{1<->2} and \ref{prec_2}.
\end{proof}

\begin{example}
\label{y}
There are unboring $\bf{\Sigma^0_4}$ ideals $\I$ and $\J$ such that $\J\not\leq_K\I$ but $\text{FinBW}(\I)\subseteq\text{FinBW}(\J)$. Indeed, this follows from Example \ref{ex_prec} and Proposition \ref{prec_2}.
\end{example}

Theorem \ref{char2} characterizes $\text{FinBW}(\I)\setminus\text{FinBW}(\J)\neq\emptyset$ only in the case when $\I$ is a hereditary weak P-ideal. Thus, a natural question is whether it is possible that $\text{FinBW}(\I)$ and $\text{FinBW}(\widetilde{\I})$ are not the same, where $\widetilde{\I}$ is the smallest hereditary weak P-ideal containing $\I$ (which exists by Proposition \ref{smallest}). The answer is yes (even for $\bf{\Sigma^0_4}$ ideals). To show it we will need the following lemma.

\begin{lemma}
\label{lemconv}
For each $B\notin\conv$ there is a $C\subseteq B$, $C\notin\conv$ with $\fin^2\cong\conv|C$. 
\end{lemma}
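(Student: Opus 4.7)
The idea is to locate inside $B$ a pairwise disjoint countable family of convergent sequences $C_n\to x_n$ whose limit points $(x_n)$ are themselves distinct and cluster at a single extra point $x\in[0,1]$; then any bijection $f:\bigcup_n C_n\to\omega^2$ matching $C_n$ with $\{n\}\times\omega$ will witness $\conv|C\cong\fin^2$.

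First I argue that the derived set $B'$ (the set of accumulation points of $B$ in $[0,1]$) is infinite. Indeed, if $B'=\{y_1,\ldots,y_k\}$ were finite, then for pairwise disjoint small neighbourhoods $U_i\ni y_i$ the set $B\setminus\bigcup_i U_i$ would be finite and each $B\cap U_i$ would be a sequence converging to $y_i$, placing $B\in\conv$, contrary to hypothesis. Being infinite and bounded, $B'$ has a limit point $x\in[0,1]$, so I can pick distinct points $x_n\in B'\setminus\{x\}$ with $x_n\to x$.

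Next I select pairwise disjoint closed intervals $J_n\subseteq[0,1]$ with $x_n$ in the interior of $J_n$, with $x\notin\bigcup_n J_n$, and with the property that any accumulation point of $\bigcup_n J_n$ lying outside $\bigcup_n J_n$ must equal $x$; concretely, I force the diameter of $J_n$ and its distance from $x$ to tend to $0$. For each $n$, since $x_n\in B'$, I choose an injective sequence $C_n\subseteq B\cap J_n$ with $C_n\to x_n$, and set $C=\bigcup_n C_n\subseteq B$. Then $C\notin\conv$ because the derived set of $C$ contains the infinite set $\{x_n:n\in\omega\}$, whereas every member of $\conv$ has finite derived set.

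Finally, fix a bijection $f:C\to\omega^2$ with $f[C_n]=\{n\}\times\omega$ and verify that $A\in\conv$ iff $f[A]\in\fin^2$ for every $A\subseteq C$. For the forward direction, writing $A=S_1\cup\cdots\cup S_k$ with each $S_i$ a sequence converging to some $y_i$, the intersection $S_i\cap C_n$ can be infinite only when $y_i=x_n$ (because $x_n$ is the unique accumulation point of $C_n$), so at most $k$ indices $n$ have $A\cap C_n$ infinite. For the converse, if $A\cap C_n$ is infinite only for $n$ in a finite set $F$, then $\bigcup_{n\in F}(A\cap C_n)$ is a finite union of subsequences of convergent sequences, while $A':=A\setminus\bigcup_{n\in F}C_n$ meets each $C_n$ in a finite set and therefore, by our arrangement of the $J_n$, has no accumulation point in $[0,1]$ other than $x$; hence $A'$ is either finite or converges to $x$. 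In either case $A\in\conv$. The only real delicacy is the calibration of the $J_n$ so that any subset of $C$ meeting each $C_n$ finitely must accumulate only at $x$; once that is in place, the rest is straightforward bookkeeping.
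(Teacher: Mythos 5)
Your proof is correct and follows essentially the same route as the paper's: both locate a sequence of accumulation points $x_n$ of $B$ converging to some $x$, carve out pairwise disjoint pieces $C_n\subseteq B$ converging to $x_n$ (you via shrinking disjoint intervals $J_n$, the paper via nested neighborhoods $U_n$ of $x$), and observe that the copy structure gives $\conv|\bigcup_n C_n\cong\fin^2$. You spell out the derived-set argument and the final verification more explicitly than the paper does, but the underlying construction is the same.
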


\begin{proof}
As $B\notin\conv$, there is an injective sequence $(x_n)\subseteq\overline{B}$ such that each $x_n$ is a limit of some injective sequence in $B$. Since $[0,1]$ is sequentially compact, we can assume that $(x_n)$ converges in $[0,1]$ to some $x\in[0,1]$. Let $(U_n)$ be a decreasing sequence of open neighborhoods of $x$ such that $\lim_{n}\text{diam}(U_n)=0$ and $x_n\in U_n$ for each $n\in \omega$. For every $n$ find $B_n\subseteq B$ such that any injective sequence in $B_n$ converges to $x_n$. Define $C_0=U_0\cap B_0$ and $C_{n+1}=U_{n+1}\cap B_{n+1}\setminus \bigcup_{i\leq n}B_i$ for all $n$. Then $\conv|\bigcup_{n}C_n$ is isomorphic to $\fin^2$. 
\end{proof}

\begin{example}
Consider the $\bf{\Sigma^0_4}$ ideal $\conv$. It is extendable to the ideal $\text{null}=\{A\subseteq\mathbb{Q}\cap[0,1]:\ \overline{A}\text{ is of Lebesgue measure }0\}$. By Proposition \ref{all-cor}, $\text{null}$ is a hereditary weak P-ideal, as it is $\bf{\Pi^0_3}$ (see \cite{FarahSolecki}). Thus, by Proposition \ref{smallest}, there is a smallest hereditary weak P-ideal $\widetilde{\conv}$ containing $\conv$.

We claim that $\text{FinBW}(\conv)\setminus\text{FinBW}(\widetilde{\conv})\neq\emptyset$ under CH. In order to show it, we will prove that $\widetilde{\conv}\not\leq_K\conv|B$ for all $B\notin\conv$. As $\conv$ is unboring by Proposition \ref{conv}, the thesis will follow from Theorem \ref{Dthm2}.

Fix $B\notin\conv$. By Lemma \ref{lemconv}, there is a $C\notin\conv$, $C\subseteq B$ with $\conv|C$ isomorphic to $\fin^2$. Thus, it suffices to show $\widetilde{\conv}\not\leq_K\fin^2$. It will follow that $\widetilde{\conv}\not\leq_K\conv|C$ and consequently $\widetilde{\conv}\not\leq_K\conv|B$.

Fix $f:\omega^2\to\mathbb{Q}\cap[0,1]$. Since $[0,1]$ is sequentially compact, for each $n$ we can find $x_n\in[0,1]$ and $A_n\in[\omega]^\omega$ such that either $f[\{n\}\times A_n]=\{x_n\}$ or $f[\{n\}\times A_n]$ is infinite and each injective sequence in $f[\{n\}\times A_n]$ converges to $x_n$. There are also $x\in[0,1]$ and $A\in[\omega]^\omega$ such that $(x_n)_{n\in A}$ converges to $x$. Let $(U_n)_{n\in A}$ be a decreasing sequence of open neighborhoods of $x$ such that $\lim_{n}\text{diam}(U_n)=0$ and $x_n\in U_n$ for each $n\in A$. 

Define $D=\bigcup_{n\in A}\{n\}\times (A_n\cap f^{-1}[U_n])$. Then $\fin^2\not\ni D\subseteq f^{-1}[f[D]]$. However, $f[D]\in\widetilde{\conv}$. Indeed, either $f[D]\in\conv\subseteq\widetilde{\conv}$ or $\conv|f[D]$ contains an isomorphic copy of $\fin^2$. In the latter case, $f[D]\notin\widetilde{\conv}$ would mean that $\fin^2\sqsubseteq\conv|f[D]\subseteq\widetilde{\conv}|f[D]$ and contradict the fact that $\widetilde{\conv}$ is a hereditary weak P-ideal.
\end{example}

We end this section with an open problem.

\begin{problem}
Is it true that under CH for all Borel unboring ideals $\I$ and $\J$ the following are equivalent:
\begin{itemize}
\item[(i)] $\J\not\preceq\I$;
\item[(ii)] $\text{FinBW}(\I)\setminus\text{FinBW}(\J)\neq\emptyset$.
\end{itemize}
\end{problem}

Theorem \ref{char2} is a special case -- it shows that this is true for hereditary weak P-ideals. By Proposition \ref{prec_2}, the implication (ii)$\implies$(i) is true in general (even in ZFC). However, we were not able to show under CH that $\J\not\preceq\I$ implies existence of a Mr\'{o}wka space in $\text{FinBW}(\I)\setminus\text{FinBW}(\J)$.

\section{Applications}

\subsection{Simple density ideals}

Recall the definition of the density zero ideal: $\I_{d}=\left\{A\subseteq\omega:\ \lim_{n\to\infty}\frac{|A\cap\{0,1,\ldots,n\}|}{n+1}=0\right\}$. In this subsection we want to apply our results to summarize some facts about FinBW$(\I_d)$.

We need to introduce two classes of ideals which have been considered in the literature. 
\begin{itemize}
\item If $g:\omega\to [0,\infty)$ is such that $\lim_{n}g(n)=\infty$ and the sequence $\left(\frac{n}{g(n)}\right)$ does not converge to $0$, then we define the simple density ideal $\cZ_g$ by:
$$A\in\cZ_g\ \Longleftrightarrow\ \lim_{n}\frac{|A\cap\{0,1,\ldots,n\}|}{g(n)}=0.$$
This class of ideals was first studied in \cite{simpledensity} (see also \cite{EUvsSD} and \cite{Jarek}).
\item If $h:\omega\to [0,\infty)$ is such that $\sum_{i=0}^\infty h(i)=\infty$ and $\lim_{n}\frac{h(n)}{\sum_{i=0}^n h(i)}=0$, then we define an Erd\H{o}s-Ulam ideal $\EU_h$ by: 
$$A\in\EU_h\ \Longleftrightarrow\ \lim_{n}\frac{\sum_{i\in A\cap\{0,1,\ldots,n\}} h(i)}{\sum_{i=0}^{n} h(i)}=0$$
(cf. \cite[Example 1.2.3(d)]{Farah}).
\end{itemize}
Both Erd\H{o}s-Ulam ideals and simple density ideals are examples of analytic P-ideals (see \cite{simpledensity} and \cite[Example 1.2.3(d)]{Farah}). Clearly, the ideal $\I_d$ is both Erd\H{o}s-Ulam and simple density. It is known that there are $\cc$ pairwise non-isomorphic simple density ideals which are not Erd\H{o}s-Ulam as well as there are $\cc$ pairwise non-isomorphic Erd\H{o}s-Ulam ideals which are not simple density (see \cite[Proposition 5]{EUvsSD} and \cite[Theorem 3]{Jarek}). 

\begin{proposition}\
\begin{itemize}
\item[(a)] $\omega_1$ with order topology is in FinBW$(\I_d)$. Moreover, under MA($\sigma$-centered) there is an uncountable separable space in FinBW$(\I_d)$. 
\item[(b)] FinBW$(\EU_h)=$FinBW($\I_d$) for all Erd\H{o}s-Ulam ideals $\EU_h$. 
\item[(c)] $\text{FinBW}(\I_d)\subseteq\text{FinBW}(\cZ_g)$ for all simple density ideals $\cZ_g$. 
\item[(d)] $[0,1]\in\text{FinBW}(\cZ_g)\setminus\text{FinBW}(\I_d)$ for each simple density ideal $\cZ_{g}$ which is not Erd\H{o}s-Ulam. 
\item[(e)] (MA($\sigma$-centered)) There is a family $\mathcal{F}$ of size $\cc$ of simple density ideals such that $\text{FinBW}(\cZ_{g})\setminus\text{FinBW}(\cZ_{g'})\neq\emptyset$ whenever $\cZ_{g},\cZ_{g'}\in\cF$ are distinct.
\end{itemize}
\end{proposition}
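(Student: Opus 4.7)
For part (a), my plan is to invoke Corollary~\ref{corollary-main} directly. Since $\I_d$ is a $\bf{\Pi^0_3}$ ideal by Example~\ref{I_d is KB}, it is in particular a coanalytic P-ideal and a $\bf{\Pi^0_4}$ ideal, so Corollary~\ref{corollary-main} gives in ZFC that $\omega_1$ with the order topology lies in $\text{FinBW}(\I_d)$, and under MA($\sigma$-centered) additionally produces an uncountable separable Mr\'{o}wka space in $\text{FinBW}(\I_d)$.

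For parts (b) and (c), my strategy is to establish the relevant Kat\v{e}tov relationships and then apply Corollary~\ref{corollary}(a), which converts $\J\leq_K\I$ into $\text{FinBW}(\I)\subseteq\text{FinBW}(\J)$. For (b), I would use that every Erd\H{o}s--Ulam ideal $\EU_h$ is Kat\v{e}tov equivalent to $\I_d$: the standard block construction, partitioning $\omega$ into successive intervals $B_k$ of size $\lceil H_k\rceil-\lceil H_{k-1}\rceil$ with $H_n=\sum_{i\leq n}h(i)$ and setting $\phi(n)=k$ for $n\in B_k$, matches the $\I_d$-density of $\phi^{-1}[A]$ at position $\lceil H_K\rceil$ (up to rounding) with the $\EU_h$-mass of $A\cap[0,K]$, yielding $\EU_h\leq_K\I_d$; the reverse reduction uses a representative-selection/splitting construction whose details are in \cite{EUvsSD} and Farah's treatment of analytic P-ideals. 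Together they give $\text{FinBW}(\EU_h)=\text{FinBW}(\I_d)$. For (c), I would invoke from \cite{simpledensity} the reduction $\cZ_g\leq_K\I_d$ for every simple density ideal $\cZ_g$: the defining condition $n/g(n)\not\to 0$ supplies an infinite $N\subseteq\omega$ and $c>0$ with $g(n)\leq n/c$ for $n\in N$, which drives an analogous block construction.

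For part (d), I would treat the two pieces separately. The fact $[0,1]\notin\text{FinBW}(\I_d)$ amounts, via Proposition~\ref{o}, to the classical relation $\conv\leq_K\I_d$; an explicit witness enumerates $\mathbb{Q}\cap[0,1]$ along blocks tuned so that each convergent-in-$[0,1]$ subsequence pulls back to a density-zero set. For the other inclusion, $[0,1]\in\text{FinBW}(\cZ_g)$ whenever $\cZ_g$ is a non-Erd\H{o}s--Ulam simple density ideal, I would invoke the characterization from \cite{simpledensity}: such a $\cZ_g$ is FinBW, i.e.\ $\conv\not\leq_K\cZ_g$, whence $[0,1]\in\text{FinBW}(\cZ_g)$ by Proposition~\ref{o}.

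For part (e), I would first produce a family $\mathcal{F}$ of size $\cc$ of pairwise $\leq_K$-incomparable simple density ideals, refining the construction in \cite[Proposition~5]{EUvsSD} (or \cite{simpledensity}) of $\cc$ pairwise non-isomorphic non-Erd\H{o}s--Ulam simple density ideals. Each member of $\mathcal{F}$ is a $\bf{\Pi^0_3}$ analytic P-ideal, hence a coanalytic hereditary weak P-ideal by Proposition~\ref{all-cor}, and in particular unboring by Proposition~\ref{boringprop}(a). Theorem~\ref{char2} in its coanalytic form under MA($\sigma$-centered) then yields, for any two distinct $\cZ_g,\cZ_{g'}\in\mathcal{F}$, a Mr\'{o}wka space in $\text{FinBW}(\cZ_g)\setminus\text{FinBW}(\cZ_{g'})$. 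The principal technical obstacles are the Kat\v{e}tov reductions in (b) and (c) and the $\leq_K$-antichain of size $\cc$ in (e); all three rely on specific block and submeasure constructions from the cited papers, which I would invoke rather than re-derive here.
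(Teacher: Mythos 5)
Your overall decomposition and the reductions in parts (a), (b), (c) match the paper: (a) via Corollary~\ref{corollary-main}, (b) and (c) via Corollary~\ref{corollary}(a) together with the known Kat\v{e}tov relations among Erd\H{o}s--Ulam and simple density ideals, and (e) via Theorem~\ref{char2} applied to a $\leq_K$-antichain of coanalytic hereditary weak P-ideals. Two citation-level shortcuts in (d) and (e), however, are genuine gaps as written.

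In part (d) you assert that ``the characterization from \cite{simpledensity}'' directly gives that every non-Erd\H{o}s--Ulam simple density ideal $\cZ_g$ is FinBW. No such characterization appears in that source: \cite{simpledensity} introduces the class and establishes its membership in Farah's density ideals, but it does not characterize which simple density ideals satisfy $\conv\not\leq_K\cZ_g$. The paper's route supplies the missing argument: by \cite[Theorem 3.2]{simpledensity} every simple density ideal is a tall density ideal in Farah's sense; by \cite[Lemma 1.13.9]{Farah} a tall density ideal that is not Erd\H{o}s--Ulam lies in the class $(\mathcal{Z}4)$, hence is generated by an unbounded lower semicontinuous submeasure $\phi$ and so extends to the $\bf{\Sigma^0_2}$ ideal $\fin(\phi)$; finally $\bf{\Sigma^0_2}$ ideals are FinBW by \cite[Theorem 4.2]{Gdansk} and FinBW is inherited downward under inclusion of ideals. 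Your sketch, taken literally, skips all of this.

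In part (e) you propose to obtain a $\leq_K$-antichain of size $\cc$ by ``refining the construction in \cite[Proposition~5]{EUvsSD}'' of $\cc$ pairwise non-isomorphic simple density ideals. Non-isomorphism is much weaker than $\leq_K$-incomparability, and the refinement you wave at is a nontrivial piece of work; the paper instead cites \cite[Theorem 3]{Jarek}, which already provides the $\leq_K$-antichain outright. Once the antichain is in hand, your application of Theorem~\ref{char2} in its coanalytic/MA($\sigma$-centered) form is exactly the paper's argument, so the remaining step is correct. You should replace the ``refinement'' handwave with the correct reference, or else actually supply the antichain construction.
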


\begin{proof}
(a): Follows from Corollary \ref{corollary-main}.

(b): By \cite[Theorem 1.13.10]{Farah}, all Erd\H{o}s-Ulam ideals are $\leq_K$-equivalent, i.e., $\EU_h\leq_K\EU_{h'}$ whenever $\EU_h$ and $\EU_{h'}$ are Erd\H{o}s-Ulam ideals (in fact, all Erd\H{o}s-Ulam ideals are even $\leq_{RB}$-equivalent). Thus, it suffices to apply Corollary \ref{corollary}.

(c): Follows from Corollary \ref{corollary}, as $\cZ_g\leq_{KB}\I_d$ for each simple density ideal $\cZ_g$ (by \cite[Theorem 4 and Remark 2]{Jarek}).

(d): By \cite[Section 3]{Gdansk} $[0,1]\notin\text{FinBW}(\I_d)$. On the other hand, each simple density ideal $\cZ_{g}$ which is not Erd\H{o}s-Ulam is extendable to a $\bf{\Sigma^0_2}$ ideal (hence, $[0,1]\in\text{FinBW}(\cZ_g)$ by \cite[Theorem 4.2]{Gdansk}). Indeed, each tall density ideal in the sense of Farah, which is not an Erd\H{o}s-Ulam ideal (i.e., belonging to the class $(\mathcal{Z}4)$ from \cite[Lemma 1.13.9]{Farah}), is generated by some unbounded submeasure $\phi$, hence, it can be extended to a $\bf{\Sigma^0_2}$ ideal $\fin(\phi)$. Thus, it suffices to note that all simple density ideals are tall density ideals in the sense of Farah (by \cite[Theorem 3.2]{simpledensity}).

(e): By \cite[Lemma 1.2.2 and Theorem 1.2.5]{Farah}, each analytic P-ideal is $\bf{\Pi^0_3}$. Thus, by Proposition \ref{all-cor}, Erd\H{o}s-Ulam ideals and simple density ideals are hereditary weak P-ideals. What is more, by \cite[Theorem 3]{Jarek}, there is a family of cardinality $\cc$ consisting of simple density ideals pairwise incomparable with respect to $\leq_K$. Hence, the thesis follows from Theorem \ref{char2}.
\end{proof}

\subsection{Hindman spaces}
\label{sectionHindman}

\begin{definition}
For an infinite set $M\subseteq\omega$ we write $FS(M)=\{\sum_{n\in F}n:\ F\in[M]^{<\omega},F\neq\emptyset\}$. A topological space is Hindman if for each sequence $(x_n)\subseteq X$ we can find $x\in X$ and an infinite $D\subseteq\omega$ such that for each open neighborhood $U$ of $x$ there is an $m\in\omega$ with $\{x_n:\ n\in FS(D\setminus \{0,1,\ldots,m\})\}\subseteq U$.
\end{definition}

Hindman spaces were introduced by M. Kojman in \cite{KojmanHindman}. Since then this subject has been extensively studied among others in \cite{FT}, \cite{Flaskova}, \cite{Jones}, \cite{KojmanShelah} and recently in \cite{NewHindman}, where the following result is proved:

\begin{proposition}[{\cite[Proposition 1.1]{NewHindman}}]
\label{H1}
Every Mr\'{o}wka space defined by a MAD family is not a Hindman space.
\end{proposition}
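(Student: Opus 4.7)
The plan is to consider the identity sequence $x_n = n$ in $\Phi(\mathcal{A})$ and show that it admits no Hindman witness $(x, D)$. Split the analysis by the location of $x$ in $\Phi(\mathcal{A}) = \omega \cup \mathcal{A} \cup \{\infty\}$. The case $x \in \omega$ is immediate: the singleton $\{x\}$ is open, so the infinite set $FS(D \setminus \{0, \ldots, m\})$ would collapse to $\{x\}$. For $x = A \in \mathcal{A}$, the basic neighborhood $\{A\} \cup A$ forces $FS(D \setminus \{0, \ldots, m\}) \subseteq A$ for some $m$, so $A$ must contain an $FS$-set. For $x = \infty$, the neighborhoods of $\infty$ force, for each $A \in \mathcal{A}$, some $m_A$ with $FS(D \setminus \{0, \ldots, m_A\}) \cap A = \emptyset$.

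The two nontrivial cases are handled by combining MAD-ness of $\mathcal{A}$ (so that every infinite subset of $\omega$ meets some member of $\mathcal{A}$ infinitely, else it would extend the family) with Hindman's theorem applied inside an IP-set: any finite coloring of $FS(E)$ has a monochromatic IP-subset $FS(E')$ for some infinite $E' \subseteq E$. For $x = \infty$, MAD produces $A^\ast \in \mathcal{A}$ with $FS(D) \cap A^\ast$ infinite; Hindman applied to the coloring $\chi_{A^\ast}$ on $FS(D)$ gives an infinite $E \subseteq D$ with $FS(E)$ monochromatic. The alternative $FS(E) \subseteq A^\ast$ contradicts the standing assumption since $\emptyset \neq FS(E \setminus \{0, \ldots, m_{A^\ast}\}) \subseteq FS(D \setminus \{0, \ldots, m_{A^\ast}\}) \cap A^\ast$. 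Hence $FS(E) \cap A^\ast = \emptyset$, and iterating MAD plus Hindman on the infinite set $FS(E)$ produces a countable sequence of distinct $A_0, A_1, \ldots \in \mathcal{A}$ and nested infinite sets $E_0 \supseteq E_1 \supseteq \cdots$ with $FS(E_k) \cap A_i = \emptyset$ for all $i \leq k$. A fusion-style pseudo-intersection $\bar{E}$ of the $E_k$'s, combined with MAD applied to $FS(\bar{E})$, is then used to contradict the original Case-$3$ assumption.

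The case $x = A$ runs an analogous iteration inside $A$, using that $A \cap A'$ is finite for every $A' \in \mathcal{A} \setminus \{A\}$ to apply Hindman to the coloring $\chi_{A'}$ on $FS(D \setminus \{0, \ldots, m\})$ and refine the chain until the maximality of $\mathcal{A}$ is violated. The main obstacle I expect is the fusion step: the pseudo-intersection $\bar{E}$ leaves a finite residue $\bar{E} \setminus E_k$ whose shifted $FS$-contributions can in principle re-enter the excluded $A_k$'s, so each diagonal element $e_k$ must be chosen large enough relative to the running sums from $e_0, \ldots, e_{k-1}$ that the $FS$-set of $\bar{E}$ agrees with that of $\bar{E} \cap E_k$ modulo a controlled finite set that can then be absorbed by passing to a tail.
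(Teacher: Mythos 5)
The paper does not prove this proposition: it is quoted verbatim from \cite[Proposition 1.1]{NewHindman}, so there is no internal argument to compare against. Evaluating your attempt on its own terms, the choice of the identity sequence $x_n=n$ is already inadequate, and each of the two nontrivial cases has a genuine gap.

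For the case $x=A\in\mathcal{A}$, there is no contradiction to be had. If some $A\in\mathcal{A}$ contains an FS-set $FS(E)$ with $E$ infinite --- and such MAD families exist, e.g.\ extend the one-element AD family $\{FS(\{10^n:n\geq 1\})\}$ to a MAD family --- then the pair $(A,E)$ \emph{is} a Hindman witness for the identity sequence: for a basic neighborhood $\{A\}\cup(A\setminus F)$ of $A$ one picks $m$ large enough that $\min FS(E\setminus\{0,\dots,m\})>\max F$, and then $FS(E\setminus\{0,\dots,m\})\subseteq A\setminus F$. Your proposed iteration ``until maximality of $\mathcal{A}$ is violated'' cannot start, because once $FS(E)\subseteq A$ every infinite subset of $FS(E)$ meets $A$ infinitely and every other member of $\mathcal{A}$ only finitely, so MAD applied to any $FS(E')\subseteq FS(E)$ just hands you $A$ back; maximality is perfectly intact. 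For the case $x=\infty$, the individual steps of the iteration are sound (modulo the correction that the sets $E_k$ must be taken as sum-subsystems of $D$ rather than subsets, so that $FS(E_{k+1}\setminus\{\text{first few elements}\})\subseteq FS(D\setminus\{0,\dots,m_{A_k}\})$ holds), but there is no endgame: after the fusion you have $\bar E$ with $FS(\bar E)$ almost disjoint from the countably many $A_0,A_1,\dots$ you extracted, MAD then produces yet another $A^{\dagger}\in\mathcal{A}\setminus\{A_i:i\in\omega\}$ with $FS(\bar E)\cap A^{\dagger}$ infinite, and since $\mathcal{A}$ is uncountable this contradicts nothing --- you can set $A_\omega=A^\dagger$ and keep going forever. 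Closing the proof requires a genuinely different idea: either a different test sequence that neutralizes the $x=A$ case, or a global argument (idempotent ultrafilter, or a characterization of when $\Phi(\mathcal{A})$ is Hindman in terms of the Hindman ideal) rather than a transfinite diagonalization capped at $\omega$.
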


\begin{lemma}
\label{H2}
Every boring space is Hindman.
\end{lemma}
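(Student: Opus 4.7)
The plan is to combine the structural description of boring spaces from Proposition~\ref{boringspaces}(ii) with iterated applications of Hindman's theorem. Given a boring $X$ and a sequence $(x_n)\subseteq X$, write $X=X_0\sqcup\cdots\sqcup X_k$ where each $X_i$ is a one-point compactification of a discrete space with distinguished ``point at infinity'' $p_i$ (a finite $X_i$ is treated as the one-point compactification of a finite discrete space, in which case $p_i$ is any chosen point).

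First I would apply Hindman's theorem to the finite partition $\omega=\bigcup_{i\leq k}\{n:x_n\in X_i\}$ to obtain an index $i$ and an infinite $M_0\subseteq\omega$ with $x_n\in X_i$ for every $n\in FS(M_0)$. A second application, this time to the $2$-coloring of $FS(M_0)$ determined by ``$x_n=p_i$'' versus ``$x_n$ is isolated in $X_i$'', yields an infinite $M_1\subseteq M_0$ on which that coloring is constant. If it is constantly $p_i$, then $x=p_i$ and $D=M_1$ trivially satisfy the Hindman condition, so we are done.

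In the other case, every $x_n$ with $n\in FS(M_1)$ is an isolated point of $X_i$. I would enumerate the isolated points of $X_i$ as $y_0,y_1,\ldots$ and recursively apply Hindman to the $2$-coloring ``$x_n=y_\ell$'' on $FS(M_\ell)$, obtaining an infinite $M_{\ell+1}\subseteq M_\ell$ with constant color. If the constant color is ever ``$x_n=y_\ell$'' I stop with $x=y_\ell$ and $D=M_{\ell+1}$; otherwise I pick $d_\ell\in M_{\ell+1}$ with $d_\ell>d_{\ell-1}$ and set $D=\{d_0<d_1<\cdots\}$. The nesting $d_j\in M_{j+1}\subseteq M_{\ell+1}$ for $j\geq\ell$ gives $FS(\{d_\ell,d_{\ell+1},\ldots\})\subseteq FS(M_{\ell+1})$, so the value $y_\ell$ is avoided by $x_n$ on this tail of the $FS$-set of $D$.

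To conclude, take $x=p_i$. Any open neighborhood $U$ of $p_i$ in $X$ has $X_i\setminus U$ finite, hence contained in $\{y_0,\ldots,y_L\}$ for some $L$; then $m=d_L$ works, because for any $n\in FS(D\setminus\{0,\ldots,m\})$ we have $n\in FS(\{d_{L+1},d_{L+2},\ldots\})\subseteq FS(M_{\ell+1})$ for every $\ell\leq L$, which forces $x_n\neq y_\ell$ for all such $\ell$, and hence $x_n\in U$. The only substantive step is the diagonal construction of $D$ in the ``always isolated'' case, which is why the nesting of the $M_\ell$'s must be built explicitly; the two preliminary Hindman applications and the neighborhood argument for $p_i$ are routine, and the possibility that $X_i$ has only finitely many isolated points is automatic, since the recursion must then halt at some $y_\ell$.
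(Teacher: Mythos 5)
Your argument takes a genuinely different route from the paper's: the paper simply cites \cite[Theorem 11]{KojmanHindman} (a Hausdorff space in which closures of countable sets are compact and first-countable is Hindman) and checks, via Proposition~\ref{boringspaces}(ii), that boring spaces satisfy this. You instead unwind the combinatorics directly. The plan is sound, but the diagonal step — which you rightly flag as the substantive one — is stated incorrectly. Applying Hindman's theorem to a finite coloring of $FS(M_\ell)$ produces an infinite $M_{\ell+1}$ with $FS(M_{\ell+1})\subseteq FS(M_\ell)$ monochromatic; it does \emph{not} produce $M_{\ell+1}\subseteq M_\ell$ as sets, and the elements of $M_{\ell+1}$ are typically finite sums of several elements of $M_\ell$. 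Without set-theoretic nesting, choosing $d_j\in M_{j+1}$ does not yield $FS(\{d_\ell,d_{\ell+1},\ldots\})\subseteq FS(M_{\ell+1})$. Concretely, take $M_1=\{1,10,100,\ldots\}$ and $M_2=\{11,1100,110000,\ldots\}$: then $FS(M_2)\subseteq FS(M_1)$, but $d_0=1\in M_1$ and $d_1=11\in M_2$ give $d_0+d_1=12\notin FS(M_1)$, so the claimed containment fails already for $\ell=0$.

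The standard repair is to track supports rather than rely on set nesting: present each $M_{\ell+1}$ as a sequence of block sums over $M_\ell$, and at stage $\ell$ choose $d_\ell\in M_{\ell+1}$ whose support (unfolded all the way down to $M_1$) lies strictly above the supports of $d_0,\ldots,d_{\ell-1}$. With this condensed choice the tail containment $FS(\{d_\ell,d_{\ell+1},\ldots\})\subseteq FS(M_{\ell+1})$ does hold, and the rest of your argument goes through. Two small additional points: since $X_i$ need not be countable, the enumeration $y_0,y_1,\ldots$ should range only over the isolated points of $X_i$ that actually occur as values of the sequence $(x_n)$; correspondingly, in the final neighborhood argument one only needs $\{y_0,\ldots,y_L\}$ to cover $(X_i\setminus U)\cap\{y_0,y_1,\ldots\}$, which is still finite.
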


\begin{proof}
By \cite[Theorem 11]{KojmanHindman}, a space $X$ is Hindman whenever it satisfies condition $(\star)$, that is, whenever $X$ is Hausdorff and the closure in $X$ of each countable subset of $X$ is compact and first-countable. Using Proposition \ref{boringspaces}(ii), it is easy to see that each boring space satisfies condition $(\star)$.
\end{proof}

\begin{corollary}
\label{2}
(CH) The following are equivalent for every ideal $\I$:
\begin{itemize}
\item $\I$ is unboring;
\item there is an uncountable Mr\'{o}wka space in FinBW($\I$) which is not a Hindman space;
\item there is a space in FinBW($\I$) which is not a Hindman space.
\end{itemize}
\end{corollary}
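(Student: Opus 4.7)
The plan is to prove the three equivalences cyclically. Write the three conditions as (a), (b), (c) in the order they appear. The implication (b)$\Rightarrow$(c) is immediate, since every uncountable Mr\'{o}wka space is in particular a space. So I only need to establish (a)$\Rightarrow$(b) and (c)$\Rightarrow$(a).

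For (a)$\Rightarrow$(b), I would invoke Theorem \ref{Mrowka} under CH to produce an uncountable Mr\'{o}wka space $\Phi(\mathcal{A})\in\text{FinBW}(\I)$. The crucial point is that the AD family $\mathcal{A}$ delivered by that construction is in fact \emph{maximal}: the proof of Theorem \ref{Mrowka} explicitly applies Proposition \ref{MrowkaChar} together with Lemma \ref{MAD}, and Lemma \ref{MAD} is precisely the observation that any AD family with the relevant ``every $f\in\mathcal{D}_{\I}$ has a finite-to-one selector landing inside one $A\in\mathcal{A}$'' property must be MAD. Once $\mathcal{A}$ is known to be MAD, Proposition \ref{H1} applies directly and tells us that $\Phi(\mathcal{A})$ is \emph{not} a Hindman space. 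This gives (b).

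For (c)$\Rightarrow$(a), I would argue by contrapositive: assume $\I$ is boring and show that every space in $\text{FinBW}(\I)$ is Hindman. By the definition of $\text{FinBW}(\I)$, each such space is Hausdorff. Proposition \ref{f} then says that every Hausdorff space in $\text{FinBW}(\I)$ is boring. Finally, Lemma \ref{H2} states that every boring space is a Hindman space. Combining, every space in $\text{FinBW}(\I)$ is Hindman, contradicting (c).

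The argument is essentially a bookkeeping chain that glues together Theorem \ref{Mrowka}, Proposition \ref{H1}, Proposition \ref{f}, and Lemma \ref{H2}, so I do not expect any real obstacle. The only subtle point to verify is that the Mr\'{o}wka space produced by Theorem \ref{Mrowka} is indeed of the form $\Phi(\mathcal{A})$ with $\mathcal{A}$ a MAD family (so that Proposition \ref{H1} can be invoked); but this is already encoded in the statement of Lemma \ref{MAD} and the way Theorem \ref{Mrowka} is proved.
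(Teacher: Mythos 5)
Your proof is correct and follows essentially the same approach as the paper: the paper's one-line proof cites Theorem \ref{char}, Proposition \ref{H1}, and Lemma \ref{H2}, and you have simply unfolded Theorem \ref{char} into its ingredients (Theorem \ref{Mrowka} for the forward direction and Proposition \ref{f} for the contrapositive of the reverse), while correctly noting the crucial point that the AD family built in Theorem \ref{Mrowka} is MAD via Lemma \ref{MAD}, which is exactly the parenthetical observation the paper itself makes.
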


\begin{proof}
Follows from Theorem \ref{char}, Proposition \ref{H1} (as any uncountable Mr\'{o}wka space given by Theorem \ref{char} is defined by a MAD family) and Lemma \ref{H2}.
\end{proof}

The following result generalizes the main theorem of \cite{KojmanShelah} stating that under CH Hindman spaces do not coincide with the class FinBW($\cW$) (see also \cite{Jones} where this is proved under MA($\sigma$-centered) instead of CH).

\begin{corollary}
(CH) There is no ideal $\I$ such that FinBW($\I$) coincides with Hindman spaces.
\end{corollary}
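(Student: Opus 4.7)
The plan is to split into two cases according to whether $\I$ is boring or unboring, and in each case exhibit a witness showing that the classes FinBW$(\I)$ and Hindman spaces do not coincide.

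First, suppose $\I$ is unboring. Then I can apply Corollary \ref{2} directly, which (under CH) produces an uncountable Mr\'{o}wka space $X$ lying in FinBW$(\I)$ but which is not a Hindman space. Thus FinBW$(\I)$ is not contained in the class of Hindman spaces, and the two classes differ.

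Second, suppose $\I$ is boring. By Proposition \ref{f} (or equivalently Corollary \ref{c}), every Hausdorff space in FinBW$(\I)$ is boring and hence compact. So any non-compact Hindman space automatically witnesses that the class of Hindman spaces is not contained in FinBW$(\I)$. My candidate witness is $Y = \omega_1$ with the order topology; it is sequentially compact but not compact. To see that $Y$ is Hindman, I would invoke Kojman's criterion $(\star)$ from \cite[Theorem 11]{KojmanHindman} (the same criterion used in the proof of Lemma \ref{H2}): a Hausdorff space in which every countable subset has compact first-countable closure is Hindman. For $Y$, any countable $C\subseteq\omega_1$ is bounded by some countable ordinal $\alpha$, so $\overline{C}\subseteq[0,\alpha]$ is a compact, first-countable space. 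Therefore $Y$ is Hindman but not in FinBW$(\I)$.

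There is no real obstacle here: both halves of the dichotomy follow immediately from results already established. The mildly subtle point is the boring case, where one must remember that boringness of $\I$ forces \emph{compactness} of every space in FinBW$(\I)$, so producing a non-compact Hindman space is enough; $\omega_1$ serves this purpose because its order-topological structure makes Kojman's $(\star)$ condition transparent. The unboring case is essentially a direct quotation of Corollary \ref{2}. Combining both cases yields that for no ideal $\I$ can FinBW$(\I)$ coincide with the class of Hindman spaces.
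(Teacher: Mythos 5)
Your proof is correct and follows the same basic dichotomy (boring vs.\ unboring) as the paper; the unboring case is identical, quoting Corollary~\ref{2}. The only difference is the boring case: the paper takes $[0,1]$ as the witness, noting it is Hindman (already established in Kojman's original paper) and not in FinBW$(\I)$ by Theorem~\ref{extreme} (since $[0,1]$ is neither finite nor boring), whereas you take $\omega_1$ and argue via \emph{non-compactness}, citing Corollary~\ref{c} to exclude it from FinBW$(\I)$ and verifying the Hindman property through Kojman's condition $(\star)$ directly. Both witnesses work; the paper's choice is slightly shorter since the Hindman property of $[0,1]$ is a black box, while yours requires the (correct but extra) observation that countable subsets of $\omega_1$ have compact first-countable closures. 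Using $\omega_1$ also has the small aesthetic advantage of being the space already singled out in the paper as the canonical non-compact FinBW example, making the two halves of the argument feel more symmetric.
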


\begin{proof}
If $\I$ is a boring ideal then $[0,1]$ is a Hindman space (cf. \cite{KojmanHindman}) not belonging to FinBW($\I$) (by Theorem \ref{extreme}). On the other hand, if $\I$ is unboring then it suffices to apply Corollary \ref{2}.
\end{proof}

\subsection*{Acknowledgment}
The Author is grateful to R. Filip\'{o}w for remarks that allowed to improve Proposition \ref{boringspaces} and to the anonymous referee for his helpful comments that refined the quality of the manuscript.


\end{document}